\newtheorem{thm}{Theorem}[section]
\newtheorem{lem}[thm]{Lemma}
\newtheorem{prop}[thm]{Proposition}
\newtheorem{cor}[thm]{Corollary}
\newtheorem{ex}[thm]{Example}
\newtheorem{defn}[thm]{Definition}
\newtheorem{rmk}[thm]{Remark}
\numberwithin{equation}{section}
\crefname{thm}{Theorem}{Theorems}
\crefname{prop}{Proposition}{Propositions}
\crefname{lem}{Lemma}{Lemmas}
\crefname{cor}{Corollary}{Corollaries}
\crefname{conj}{Conjecture}{Conjectures}
\crefname{defn}{Definition}{Definitions}
\crefname{rmk}{Remark}{Remarks}
\crefname{section}{Section}{Sections}
\crefname{subsection}{Subsection}{Subsections}
\crefname{ex}{Example}{Examples}
\Crefname{thm}{Theorem}{Theorems}
\Crefname{prop}{Proposition}{Propositions}
\Crefname{lem}{Lemma}{Lemmas}
\Crefname{cor}{Corollary}{Corollaries}
\Crefname{conj}{Conjecture}{Conjectures}
\Crefname{defn}{Definition}{Definitions}
\Crefname{rmk}{Remark}{Remarks}
\Crefname{section}{Section}{Sections}
\Crefname{subsection}{Subsection}{Subsections}
\Crefname{ex}{Example}{Examples}
\DeclareMathOperator{\im}{im}
\DeclareMathOperator{\spn}{span}
\DeclareMathOperator{\res}{res}
\DeclareMathOperator{\std}{Std}
\DeclareMathOperator{\colstd}{ColStd}
\DeclareMathOperator{\het}{ht}
\DeclareMathOperator{\gar}{Gar}
\newcommand{\Mod}[1]{\ (\mathrm{mod}\ #1)}
\newcommand\sesqui{1}
\newcommand\one{m{+}1}
\newcommand\enminusone{n{-}1}
\newcommand\ten{10}
\newcommand\eleven{11}
\newcommand\twelve{12}
\newcommand\thirteen{13}
\newcommand\fourteen{14}
\newcommand\emplusone{{m}+{1}}
\newcommand\emplustwo{m{+}2}
\newcommand\emplusthree{m{+}3}
\newcommand\emplusfour{m{+}4}
\newcommand\emplusiminusone{m{+}i{-}1}
\newcommand\emplusi{m{+}i}
\newcommand\emplusiplusone{m{+}i{+}1}
\newcommand\emplusiplustwo{m{+}i{+}2}
\newcommand\aye{A_i}
\newcommand\ayezero{A_0}
\newcommand\aone{a_1}
\newcommand\aem{a_m}
\newcommand\btwo{b_2}
\newcommand\bemplusone{b_{m{+}1}}
\newcommand\emset{\varnothing}
\newcommand\gr{\Yfillcolour{lightgray}}
\newcommand\wh{\Yfillcolour{white}}
\newcommand{\Psid}[2]{\Psi\hspace{-4pt}\underset{\scriptscriptstyle{#2}}{\overset{\scriptscriptstyle{#1}}{\downarrow}}}
\newcommand{\Psiu}[2]{\Psi\hspace{-4pt}\overset{\scriptscriptstyle{#2}}{\underset{\scriptscriptstyle{#1}}{\uparrow}}}
\newcommand{\s}[2]{s\hspace{-4pt}\underset{\scriptscriptstyle{#2}}{\overset{\scriptscriptstyle{#1}}{\downarrow}}}
\newcommand{\su}[2]{s\hspace{-4pt}\overset{\scriptscriptstyle{#2}}{\underset{\scriptscriptstyle{#1}}{\uparrow}}}
\newcommand\ttr{\mathtt{R}}
\newcommand\tts{\mathtt{S}}
\newcommand\ttt{\mathtt{T}}
\newcommand\ttu{\mathtt{U}}
\newcommand\ttw{\mathtt{W}}
\newcommand\ttx{\mathtt{X}}
\newcommand\tty{\mathtt{Y}}
\newcommand\ttz{\mathtt{Z}}
\tikzstyle{dotted}=                  [dash pattern=on \pgflinewidth off 2pt]
\tikzstyle{dashed}=                  [dash pattern=on 3pt off 3pt]
\tikzstyle{dashdotted}=              [dash pattern=on 3pt off 2pt on \the\pgflinewidth off 2pt]
\tikzstyle{densely dashdotted}=      [dash pattern=on 3pt off 1pt on \the\pgflinewidth off 1pt]
\tikzstyle{loosely dashdotted}=      [dash pattern=on 3pt off 4pt on \the\pgflinewidth off 4pt]
\author{Louise Sutton\thanks{Present address: National University of Singapore, 10 Lower Kent Ridge Road, Singapore 119076}\\\normalsize Queen Mary University of London\\ Mile End Road\\ London E1 4NS, UK \\\texttt{\normalsize matloui@nus.edu.sg}}
\newcommand\runninghead[1]{\gdef\@runninghead{#1}}
\newcommand\auth{Louise Sutton}
\newcommand\toptitle{\date{}\maketitle\markboth{\auth}{\@runninghead}\pagestyle{myheadings}}
\begin{document}

\title{Specht modules labelled by hook bipartitions I}

\runninghead{Specht modules labelled by hook bipartitions I}

\toptitle

\begin{abstract}
Brundan, Kleshchev and Wang endow the Specht modules $S_{\lambda}$ over the cyclotomic Khovanov--Lauda--Rouquier algebra $\mathscr{H}_n^{\Lambda}$ with a homogeneous $\mathbb{Z}$-graded basis. In this paper, we begin the study of graded Specht modules labelled by hook bipartitions $((n-m),(1^m))$ in level $2$ of $\mathscr{H}_n^{\Lambda}$, which are precisely the Hecke algebras of type $B$, with quantum characteristic at least three. We give an explicit description of the action of the Khovanov--Lauda--Rouquier algebra generators $\psi_1,\dots,\psi_{n-1}$ on the basis elements of $S_{((n-m),(1^m))}$. Introducing certain Specht module homomorphisms, we construct irreducible submodules of these Specht modules, and thereby completely determine the composition series of Specht modules labelled by hook bipartitions.
\end{abstract}

\section{Introduction}

The main problem in the representation theory of the symmetric group $\mathfrak{S}_n$ is to completely understand the modular irreducible $\mathbb{F}\mathfrak{S}_n$-modules. James' combinatorial construction \cite{James} of the ordinary irreducible $\mathfrak{S}_n$-representations as Specht modules $S_{\lambda}$, labelled by \emph{partitions $\lambda$ of $n$}, is well known. The modular irreducible representations $D_{\mu}$ arise as heads of Specht modules labelled by \emph{$e$-regular partitions $\mu$ of $n$}. In general, the \emph{decomposition numbers} for $\mathbb{F}\mathfrak{S}_n$, which are the multiplicities $[S_{\lambda}:D_{\mu}]$ of irreducible modules $D_{\mu}$ arising as composition factors of Specht modules $S_{\lambda}$, for all partitions $\lambda$ of $n$ and for all $e$-regular partitions $\mu$ of $n$, are unknown.

In odd characteristic $p$, Peel \cite{P} studied the decomposition numbers for the \emph{hook representations} of the symmetric group; a hook representation is a Specht module $S_{(n-m,1^m)}$ labelled by a \emph{hook partition} $(n-m,1^m)$, for $0\leqslant{m}<{n}$. When $p$ does not divide $n$, Peel determined that every hook representation is, in fact, an irreducible representation of $\mathbb{F}\mathfrak{S}_n$. Otherwise, when $p$ divides $n$, Peel introduced a Specht module homomorphism $\Theta_m:S_{(n-m,1^m)}\rightarrow{S_{(n-m-1,1^{m+1})}}$ over $\mathfrak{S}_n$ and showed that $S_{(n-m,1^m)}$ has the composition series 
\[0\subset{\ker \Theta_m}\subset{S_{(n-m,1^m)}},\] for $0<m<n-1$. The irreducible trivial and sign representations are well known to be $S_{(n)}$ and $S_{(1^n)}$, respectively, and thus, we know that the rows of the decomposition matrix of $\mathbb{F}\mathfrak{S}_n$ labelled by hook representations have the following form:
\[
\begin{blockarray}{cccccccccc}
\begin{block}{c(ccccc|cccc)}
S_{(n)} & 1 &&&&  									&  &    & 	\\
S_{(n-1,1)} & 1 & 1 && \text{\huge$0$} & 			&  &    & 	\\
S_{(n-2,1^2)} & & 1 & 1 &&  						&  &  \text{\Huge$0$}   & 	\\
\vdots &&&\ddots &\ddots &  						&  & & 	\\
S_{(2,1^{n-2})} && \text{\huge$0$} && 1 & 1   		&  &    & 	\\
S_{(1^n)} &&&&& 1  									&  &    & 	\\
\end{block}
\end{blockarray}\:.
\]

Let $e$ be the smallest positive integer such that $q$ is a primitive $e^{\text{th}}$ root of unity, for a non-trivial invertible element $q\in\mathbb{F}^{\times}$, where we set $e=\infty$ if no such integer exists. We will refer to $e$ as the \emph{quantum characteristic}. The symmetric group algebras can be generalised to the larger family of algebras, the Iwahori--Hecke algebras $\mathscr{H}_{\mathbb{F},q}(\mathfrak{S}_n)$  of type $A$, which are dependent on $q$ as well as the ground field $\mathbb{F}$. These algebras are $q$-deformations of the symmetric group algebras, and thus, we recover the symmetric group algebra when $q=1$. Dipper and James \cite{DJ} showed that a correspondence exists between the representation theory of Iwahori--Hecke algebras of type $A$ at an $e^{\text{th}}$ root of unity and the modular representation theory of the symmetric group, and thus we can study the former representations to shed light on the latter ones.

Using the representation of the quantum affine algebra $U_v(\widehat{sl_n})$, for every pair of partitions $\lambda$ and $e$-regular $\mu$, Lascoux, Leclerc and Thibon \cite{LLT} introduced the polynomials $d_{\lambda,\mu}(v)$ with integer coefficients. They conjectured that these polynomials are $v$-analogues of decomposition numbers for Hecke algebras of type $A$ at a complex $e^{\text{th}}$ root of unity, called \emph{$v$-decomposition numbers}. This conjecture was subsequently proved by Ariki \cite{A2}. Thus $d_{\lambda,\mu}(1)$ are the usual decomposition numbers for Hecke algebras of type $A$. These polynomials appear as coefficients of the canonical basis elements of $U_v(\widehat{sl_n})$. Working solely with the canonical bases of the quantum affine algebra, Chuang, Miyachi and Tan \cite{CMT} introduce a $v$-analogue to Peel's work on hook representations. These $v$-decomposition numbers $d_{(n-m,1^m),\mu}=[S_{(n-m,1^m)}:D_{\mu}]_v$, where $\mu$ is $e$-regular, are monic monomials when non-zero.

In fact, Ariki \cite{A2} proved Lascoux, Leclerc and Thibon's conjecture for a larger family of algebras, the \emph{Ariki--Koike algebras} or equivalently the \emph{cyclotomic Hecke algebras}, introduced by Ariki and Koike \cite{AK} and further developed by Brou\'{e} and Malle \cite{BM} and Ariki \cite{A1}. Each cyclotomic Hecke algebra is associated to a complex reflection group $\left(\mathbb{Z}/l\mathbb{Z}\right)\wr\mathfrak{S}_n$ of type $G(l,m,n)$ in the Shephard--Todd classification \cite{ST}. We recover the Iwahori--Hecke algebras of type $A$ when $l=1$, in particular the symmetric group algebras when $e=\text{char}(\mathbb{F})$, and we recover the Iwahori--Hecke algebras of type $B$ when $l=2$.

Khovanov and Lauda \cite{KL}, and independently Rouquier \cite{Rou}, introduced an even larger family of algebras, the \emph{Khovanov--Lauda--Rouquier algebras} $\mathscr{H}_n$, which are naturally $\mathbb{Z}$-graded. Astonishingly, Brundan and Kleshchev \cite{BK} showed that each cyclotomic quotient $\mathscr{H}_n^{\Lambda}$ of the Khovanov--Lauda--Rouquier algebra is isomorphic to a cyclotomic Hecke algebra of type $A$. This remarkable set of papers motivated the study of \emph{graded} representation theory of Khovanov--Lauda--Rouquier algebras, and, in particular, of the symmetric group algebras.

In \cite{BKW}, Brundan, Kleshchev and Wang non-trivially $\mathbb{Z}$-grade Specht modules over $\mathscr{H}_n^{\Lambda}$, which yields a recursive combinatorial formula for their \emph{graded} dimensions. Thus we can study \emph{graded} Specht modules and their corresponding \emph{graded} decomposition numbers $d_{\lambda,\mu}=[S_{\lambda}:D_{\mu}]_v$, which encode grading shifts of their composition factors. Brundan and Kleshchev in \cite{BK3} show that these graded decomposition numbers are the same as the $v$-decomposition numbers as mentioned above, and provide a generalised graded analogue of Lascoux, Leclerc and Thibon's conjecture.

These gradings provide a deeper structure to the representation theory of the symmetric group, and more generally, to the representation theory of the Khovanov--Lauda--Rouquier algebras. By studying a set of graded Specht modules that have particularly nice presentations, namely Specht modules labelled by hook bipartitions $((n-m),(1^m))$, we aim to provide insight into the challenging problem of determining graded decomposition numbers. We draw on inspiration from \cite{CMT} and \cite{P} to study the structure of Specht modules labelled by hook bipartitions for $\mathscr{H}_n^{\Lambda}$ with quantum characteristic at least three.

This paper is structured as follows. In \cref{sec:Hecke algebras}, we set up basic notation and introduce the Khovanov--Lauda--Rouquier algebras $\mathscr{H}_n$ and their cyclotomic quotients $\mathscr{H}_n^{\Lambda}$. We recall combinatorial theory in \cref{sec:comb} that is integral in our approach of the representation theory of $\mathscr{H}_n^{\Lambda}$. \cref{sec:Specht} introduces our main objects of study, graded Specht $\mathscr{H}_n^{\Lambda}$-modules. We study Specht modules labelled by hook bipartitions $S_{((n-m),(1^m))}$ in \cref{sec:hookbipartSpecht}: we give an explicit description of the action of the $\mathscr{H}_n^{\Lambda}$-generators $\psi_1,\dots,\psi_{n-1}$ on the basis elements of $S_{((n-m),(1^m))}$, and introduce crucial Specht module homomorphisms. We advance in \cref{sec:compseries} by showing that certain $\mathscr{H}_n^{\Lambda}$-modules arising from these homomorphisms are irreducible, and in fact, appear as composition factors of $S_{((n-m),(1^m))}$. We end by providing the composition series for $S_{((n-m),(1^m))}$ with $e\in\{3,4,\dots\}$, which split into four cases depending on whether $\kappa_2\equiv{\kappa_1-1}\Mod{e}$ or not and depending on whether $n\equiv{\kappa_2-\kappa_1+1}\Mod{e}$ or not.

In further work, we completely determine the ungraded decomposition matrices for $\mathscr{H}_n^{\Lambda}$ comprising rows corresponding to hook bipartitions, together with their graded analogues.

\section{Hecke algebras}\label{sec:Hecke algebras}

We introduce the Khovanov--Lauda--Rouquier algebras in this section. Let $\mathbb{F}$ be an arbitrary field throughout.

\subsection{The symmetric group}

Let $\mathfrak{S}_n$ be the symmetric on $n$ letters. We denote the simple transposition $(i,i+1)$ of $\mathfrak{S}_n$ by $s_i$ for all $i\in\{1,\dots,n-1\}$. Then $\mathfrak{S}_n$ is generated by the standard Coxeter generators $s_1,\dots,s_{n-1}$, together with the identity element $1_{\mathfrak{S}_n}$. For $1\leqslant{i}\leqslant{j}\leqslant{n-1}$, we define 
\[\s{j}{i}:=s_j{s_{j-1}}\dots{s_i},\qquad\su{i}{j}:=s_i{s_{i+1}}\dots{s_j}.\]
	
We say that a \emph{reduced expression} for a permutation $\pi\in\mathfrak{S}_n$ is a minimal length word $\pi=s_{r_1}\dots{s_{r_m}}$ for $1\leqslant{r_i}<n$ and $1\leqslant{i}\leqslant{m}$. Let $\leqslant$ be the \emph{Bruhat order} on $\mathfrak{S}_n$, defined as follows. For $\pi_1,\pi_2\in\mathfrak{S}_n$, we write $\pi_1\leqslant{\pi_2}$ if there is a reduced expression for $\pi_1$ which is a subexpression of a reduced expression for $\pi_2$.

We define a \emph{shift} homomorphism of symmetric groups $\operatorname{shift}:\mathfrak{S}_{n-1}\rightarrow\mathfrak{S}_n$ by $\operatorname{shift}(s_i)=s_{i+1}$ for every $i$.

\subsection{Lie-theoretic notation}

Let $e$ be the quantum characteristic as introduced above. Define $I:=\mathbb{Z}/{e\mathbb{Z}}$. If $e$ is finite, then we identify $I$ with the set $\{0,1,\dots,e-1\}$, whereas, if $e$ is infinite, then we identify $I$ with the set of integers.

We let $\Gamma$ be the quiver with vertex set $I$ and directed edges $i\rightarrow{i+1}$ for each $i\in{I}$. If no directed edge exists between two vertices $i$ and $j$ such that $i\neq{j}$, we write $i\not\relbar{j}$. If $e$ is infinite, then $\Gamma$ is the integral linear quiver (of type $A_{\infty}$), otherwise $\Gamma$ is the cyclic quiver on $e$ vertices (of type $A_{e-1}^{(1)}$). The associated Cartan matrix $C_{\Gamma}=(c_{i,j})_{i,j\in{I}}$ is defined by
\begin{align*}
c_{i,j}:=
\begin{cases}
2 &\text{ if }i=j,\\
0 & \text{ if }j\not\relbar{i},\\
-1 & \text{ if $i\rightarrow{j}$ or $i\leftarrow{j}$,}\\
-2 & \text{ if }i\leftrightarrows{j}.
\end{cases}
\end{align*}
The notation $i\leftrightarrows{j}$ indicates that $i=j-1=j+1$, which only occurs when $e=2$.

The generalised Cartan matrix $C_{\Gamma}$ corresponds to a Kac--Moody algebra $\mathfrak{g}(C_{\Gamma})$, as given in \cite{Kac}. It follows that we have the simple roots $\{\alpha_i\mid{i\in{I}}\}$, the fundamental dominant weights $\{\Lambda_i\mid{i\in{I}}\}$, and the invariant symmetric bilinear form $(\:,\:)$ such that $(\alpha_i,\alpha_j)=c_{i,j}$ and $(\Lambda_i,\alpha_j)=\delta_{ij}$, for all $i,j\in{I}$. Let $Q_+:=\bigoplus_{i\in{I}}\mathbb{Z}_{\geqslant{0}}\alpha_i$ be the positive part of the root lattice. A \emph{root} $\alpha\in{Q_+}$ is a linear combination $\sum_{i\in{I}}a_i\alpha_i$ of its simple roots where $a_i\in\mathbb{Z}_{\geqslant{0}}$, and the \emph{height of $\alpha$} is the sum $\sum_{i\in{I}}a_i$, denoted by $\het(\alpha)$.

We now fix a \emph{level} $l\in\mathbb{N}$.
The symmetric group $\mathfrak{S}_l$ acts on the left by place permutation on the set $I^l$ of all $l$-tuples. An \emph{$e$-multicharge} of $l$ is an ordered $l$-tuple $\kappa=(\kappa_1,\dots,\kappa_l)\in{I^l}$. We define its associated dominant weight of level $l$ to be $\Lambda:=\Lambda_{\kappa_1}+\dots+\Lambda_{\kappa_l}$.

\subsection{Khovanov--Lauda--Rouquier algebras}

The Khovanov--Lauda--Rouquier algebras were discovered by Khovanov and Lauda \cite{KL}, and independently, Rouquier \cite{Rou}. Brundan and Kleshchev transformed their work to give the following presentation.

\begin{defn}\cite{BK}\label{defKLR}
Let $\alpha\in{Q_+}$ such that $\het(\alpha)=n$, and define the set
\[I^{\alpha}=\{i\in{I^n}\ | \ \alpha_{i_1}+\dots+\alpha_{i_n}=\alpha\}.\]
Then the algebra $\mathscr{H}_{\alpha}$ is defined to be the unital associative $\mathbb{F}$-algebra generated by the elements
\begin{equation*}
\{e(\mathbf{i})\:|\:\mathbf{i}\in{I^{\alpha}}\}\cup\{y_1,\dots,y_n\}\cup\{\psi_1,\dots,\psi_{n-1}\}
\end{equation*}
subject only to the following relations:

\begin{align}
e(\mathbf{i}) e(\mathbf{j}) & = \delta_{\mathbf{i},\mathbf{j}} e(\mathbf{i});
\hspace{15mm} 
\textstyle{\sum_{\mathbf{i}\in{I^{\alpha}}}} e(\mathbf{i}) = 1; \nonumber \\
y_r e(\mathbf{i}) & = e(\mathbf{i}) y_r; 
\hspace{22.2mm}
\psi_r e(\mathbf{i}) = e(s_r\mathbf{i}) \psi_r;									\label{psiidemp} \\
y_r y_s & = y_s y_r; \nonumber \\
\psi_r y_s & = y_s \psi_r
	\hspace{57.5mm} \text{if $s\neq{r,r+1}$}; 									\label{ypsicomm} \\
\psi_r \psi_s & = \psi_s \psi_r
	\hspace{57.5mm} \text{if $|r-s|>1$}; 										\label{psicomm} \\
\psi_r y_{r+1} e(\mathbf{i}) & = (y_r \psi_r + \delta_{i_r,i_{r+1}}) e(\mathbf{i}); 																	\label{psiyup}\\
y_{r+1} \psi_r e(\mathbf{i}) & = (\psi_r y_r +\delta_{i_r,i_{r+1}}) e(\mathbf{i}); 
																				\label{ypsidown} \\
\psi_r^2 e(\mathbf{i}) & = 
\begin{cases}																	\label{psipsi}
0 
	& \hspace{18.7mm}\text{if $i_r = i_{r+1}$}, \\
e(\mathbf{i}) 
	& \hspace{18.7mm}\text{if $i_{r+1} \neq i_r,i_r \pm 1$}, \\
(y_{r+1} - y_r) e(\mathbf{i}) 
	& \hspace{18.7mm}\text{if $i_r \rightarrow i_{r+1}$}, \\
(y_r - y_{r+1}) e(\mathbf{i}) 
	& \hspace{18.7mm}\text{if $i_r \leftarrow i_{r+1}$}, \\
(y_{r+1} - y_r)(y_r - y_{r+1}) e(\mathbf{i}) 
	& \hspace{18.7mm}\text{if $i_r \leftrightarrows i_{r+1}$};
\end{cases} \\
\psi_r \psi_{r+1} \psi_r e(\mathbf{i}) & =
\begin{cases}																	\label{psibraid}
(\psi_{r+1} \psi_r \psi_{r+1} + 1) e(\mathbf{i}) 
	& \text{if $i_{r+2} = i_r \rightarrow i_{r+1}$}, \\
(\psi_{r+1} \psi_r \psi_{r+1} - 1) e(\mathbf{i}) 
	& \text{if $i_{r+2} = i_r \leftarrow i_{r+1}$}, \\
(\psi_{r+1} \psi_r \psi_{r+1} - 2 y_{r+1} + y_r + y_{r+2}) e(\mathbf{i})
	&\text{if $i_{r+2} = i_r \leftrightarrows i_{r+1}$}, \\
\psi_{r+1} \psi_r \psi_{r+1} e(\mathbf{i}) 
	& \text{otherwise},
\end{cases}
\end{align}
for all admissible $\mathbf{i},\mathbf{j},r,s$.
\end{defn}

\begin{thm}\cite[Corollary 1]{BK}
The algebra $\mathscr{H}_{\alpha}$ is uniquely $\mathbb{Z}$-graded. 
\end{thm}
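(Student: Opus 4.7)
The plan is to exhibit an explicit $\mathbb{Z}$-grading on the generators and verify that every defining relation is homogeneous, then argue uniqueness from the structure of the relations themselves. Concretely, I would declare
\[
\deg e(\mathbf{i}) = 0,\qquad \deg\bigl(y_r e(\mathbf{i})\bigr) = 2,\qquad \deg\bigl(\psi_r e(\mathbf{i})\bigr) = -c_{i_r,i_{r+1}},
\]
with $c_{i,j}$ the entries of $C_{\Gamma}$ recorded above. Each generator thus has a well-defined degree on each idempotent summand, and this assignment extends multiplicatively. The existence claim then reduces to checking that both sides of every relation in \cref{defKLR} lie in the same degree.

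The verification is a case analysis relation by relation. The idempotent and commutation relations are immediate because the degree is constant on each relevant piece. For the $\psi y$ mixing relations \eqref{psiyup} and \eqref{ypsidown}, the left-hand side has degree $2 - c_{i_r,i_{r+1}}$; on the right, the main term $y \psi$ has the same degree, while the correction $\delta_{i_r,i_{r+1}} e(\mathbf{i})$ is nonzero only when $i_r = i_{r+1}$, in which case $c_{i_r,i_{r+1}} = 2$ and the term indeed lives in degree $0$. For the quadratic relation \eqref{psipsi} one checks that $2\deg(\psi_r e(\mathbf{i})) = -2c_{i_r,i_{r+1}}$ coincides in each of the five cases with the degree of the right-hand side (e.g.\ degree $2$ when $i_r \to i_{r+1}$, degree $0$ when $i_{r+1} \neq i_r, i_r\pm 1$, degree $4$ in the doubly bonded case, etc.). The braid relation \eqref{psibraid} is the most delicate: one must verify that in each of the three ``correction'' cases the extra monomial ($\pm 1$, or $-2y_{r+1}+y_r+y_{r+2}$) has precisely the same degree as $\psi_r\psi_{r+1}\psi_r e(\mathbf{i})$. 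For instance, in the case $i_{r+2} = i_r \rightarrow i_{r+1}$, the triple $\psi$ term has degree $-c_{i_r,i_{r+1}} - c_{i_{r+1},i_{r+2}} - c_{i_r,i_{r+1}} = 2$, matching the constant term; the $\leftrightarrows$ case gives degree $4$, matching $\deg(y)$ terms. Once these checks are made, the relations hold inside the graded algebra and so the grading descends to a bona fide $\mathbb{Z}$-grading on $\mathscr{H}_{\alpha}$.

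For uniqueness, the idea is that the grading is forced once one asks that all the defining generators be homogeneous. The idempotents $e(\mathbf{i})$ are orthogonal and sum to $1$, so each must have degree $0$. Writing $d_y := \deg y_r e(\mathbf{i})$ and $d_{\mathbf{i}} := \deg \psi_r e(\mathbf{i})$, homogeneity of \eqref{psiyup} and \eqref{ypsidown} forces $d_y$ to be independent of $r$ and $\mathbf{i}$, and forces $d_{\mathbf{i}} = -c_{i_r,i_{r+1}} \cdot (d_y/2)$ whenever $i_r = i_{r+1}$ (hence $d_y$ even); similarly homogeneity of \eqref{psipsi} pins down $d_{\mathbf{i}}$ on every other orbit of the Cartan matrix. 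All the remaining relations are then automatically homogeneous, and the only freedom is the overall scalar $d_y \in 2\mathbb{Z}_{>0}$, which after normalisation $d_y = 2$ gives a unique grading.

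The main obstacle is bookkeeping in the braid relation \eqref{psibraid}, in particular the $\leftrightarrows$ subcase at $e=2$, where one must simultaneously track the contributions of three Cartan entries against the $y$-degree on the right-hand side. Everything else is a short symbolic computation once the degrees of the generators are fixed.
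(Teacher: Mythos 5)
This is a cited result — the paper gives no proof of its own, deferring entirely to \cite[Corollary 1]{BK} — so there is nothing to compare against in the paper. Your strategy of exhibiting the degrees on generators and verifying that every relation in \cref{defKLR} is homogeneous is exactly what Brundan and Kleshchev do, and is the right approach.

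However, the braid-relation degree count is wrong, and in a way that is not just a typo but reflects a genuine oversight: you have not tracked how $\psi_r$ permutes the residue sequence. Since $\psi_r e(\mathbf{i}) = e(s_r\mathbf{i})\psi_r$, each successive $\psi$ in the word $\psi_r\psi_{r+1}\psi_r e(\mathbf{i})$ acts on a different idempotent, so the degree is
\[
-c_{i_r,i_{r+1}} - c_{i_r,i_{r+2}} - c_{i_{r+1},i_{r+2}},
\]
in which the ``outer'' pair $(i_r,i_{r+2})$ appears. Your formula $-c_{i_r,i_{r+1}} - c_{i_{r+1},i_{r+2}} - c_{i_r,i_{r+1}}$ repeats $-c_{i_r,i_{r+1}}$ and never produces $-c_{i_r,i_{r+2}}$; in particular it misses the fact that $c_{i_r,i_{r+2}}=2$ in every correction case since $i_{r+2}=i_r$. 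Plugging in $i_{r+2}=i_r \to i_{r+1}$, the correct degree is $1 - 2 + 1 = 0$, which matches the degree-$0$ constant $+1$ on the right; your text arrives at degree $2$ (and your written formula actually evaluates to $3$), so the check as written does not close. Likewise in the $\leftrightarrows$ case the degree is $2 - 2 + 2 = 2$, matching the $y$-terms, not $4$ as you claim. The conclusion is true, but the reasoning you give for the braid relation does not establish it.

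On uniqueness, the statement as reproduced in the paper is shorthand: \cite[Corollary 1]{BK} asserts uniqueness of the grading \emph{subject to} the stipulated degrees of the generators, which is automatic for any algebra with homogeneous defining relations. Your argument tries to prove something stronger --- that the grading is forced by homogeneity of the generators alone --- but as you yourself observe there is freedom to rescale $d_y$ (and indeed to take $d_y \leq 0$), so that stronger claim is false and your ``normalisation'' step is doing the real work by quietly reimposing the BK degree convention. It would be cleaner to state the theorem as it appears in \cite{BK}, prove the relations are homogeneous (the existence half, which is the actual content), and note that uniqueness then follows for free.
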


We now define the affine \emph{Khovanov--Lauda--Rouquier algebra} $\mathscr{H}_n$ to be the direct sum
\[\bigoplus_{\substack{\alpha\in{Q_+}\\\het(\alpha)=n}}\mathscr{H}_{\alpha},\]
so that $\mathscr{H}_n$ is also non-trivially $\mathbb{Z}$-graded.

We now introduce a \emph{shift} homomorphism of algebras corresponding to the shift homomorphism of symmetric groups as defined above, which is a special case of the homomorphisms as defined by Fayers--Speyer in \cite{FS}.
Let $\alpha,\beta\in{Q_+}$ be such that $\het(\alpha)=n$ and $\het(\beta)=n-1$. For $\mathbf{i}\in{I}^{\beta}$, we set $J_{\mathbf{i}}:=\left\{\mathbf{j}\in{I}^{\alpha}\:|\:j_{s+1}=i_s\text{ for }1\leqslant{s}\leqslant{n-1}\right\}$ and $e(\mathbf{i})^{+1}=\sum_{\mathbf{j}\in{J_i}}e(\mathbf{j})$. We now define the homomorphism $\operatorname{shift}:\mathscr{H}_{\beta}\rightarrow\mathscr{H}_{\alpha}$ by
\[
e(\mathbf{i})\mapsto e(\mathbf{i})^{+1},\quad
\psi_re(\mathbf{i})\mapsto\psi_{r+k}e(\mathbf{i})^{+1},\quad
y_re(\mathbf{i})\mapsto y_{r+k}e(\mathbf{i})^{+1}.
\]

\subsection{Cyclotomic Khovanov--Lauda--Rouquier algebras}

For a positive root $\alpha\in{Q_+}$, the cyclotomic algebras $\mathscr{H}_{\alpha}^{\Lambda}$ are defined to be the quotients of $\mathscr{H}_{\alpha}$, subject to the \emph{cyclotomic relations}
\[y_1^{(\Lambda,\alpha_{i_1})} e(\mathbf{i}) =0,\]
for all $\mathbf{i}\in{I^{\alpha}}$. These cyclotomic relations are homogeneous, so that $\mathscr{H}_{\alpha}^{\Lambda}$ inherits a non-trivial $\mathbb{Z}$-grading.
We define the \emph{cyclotomic Khovanov--Lauda--Rouquier algebra} $\mathscr{H}_n^{\Lambda}$ to be the direct sum
\[\bigoplus_{\substack{\alpha\in{Q_+}\\\het(\alpha)=n}}\mathscr{H}_{\alpha}^{\Lambda}.\]

We introduce Brundan and Kleshchev's remarkable \emph{Graded Isomorphism Theorem}, connecting the representation theory of the cyclotomic Hecke algebras with the cyclotomic Khovanov--Lauda--Rouquier algebras.

\begin{thm}\cite[Main Theorem]{BK}
If $e=\infty$ or $\text{char}(\mathbb{F})\nmid{e}$, then $\mathscr{H}_n^{\Lambda}$ is isomorphic to a cyclotomic Hecke algebra (of type $A$).
\end{thm}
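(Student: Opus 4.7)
The plan is to construct an explicit isomorphism $\Phi\colon\mathscr{H}_n^{\Lambda}\to\mathscr{H}_n^{\Lambda,q}$, where $\mathscr{H}_n^{\Lambda,q}$ denotes the Ariki--Koike algebra presented by generators $T_0,T_1,\dots,T_{n-1}$ with associated Jucys--Murphy elements $L_1,\dots,L_n$ and cyclotomic relation $\prod_{r=1}^{l}(T_0-q^{\kappa_r})=0$. The starting point is that, under the hypothesis $e=\infty$ or $\textup{char}(\mathbb{F})\nmid e$, every eigenvalue of each $L_r$ on a finite-dimensional $\mathscr{H}_n^{\Lambda,q}$-module lies in $\{q^i:i\in I\}$ and the $L_r$ jointly generate a commutative subalgebra whose generalised eigenspaces are indexed by tuples $\mathbf{i}\in I^n$.

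First I would build orthogonal idempotents $e(\mathbf{i})\in\mathscr{H}_n^{\Lambda,q}$, one for each $\mathbf{i}\in I^n$, projecting onto the common generalised eigenspace on which $L_r$ has eigenvalue $q^{i_r}$ for each $r$. The hypothesis on $\textup{char}(\mathbb{F})$ is precisely what makes the polynomial interpolation producing these projections valid, and summing gives $1=\sum_{\mathbf{i}}e(\mathbf{i})$. A natural definition of $y_r e(\mathbf{i}):=(1-q^{-i_r}L_r)e(\mathbf{i})$ then yields nilpotent operators on each idempotent block, and the cyclotomic relation $y_1^{(\Lambda,\alpha_{i_1})}e(\mathbf{i})=0$ drops out of the Ariki--Koike cyclotomic relation on $T_0$ via this eigenvalue decomposition.

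The delicate step is defining $\psi_r$. The idea is to set $\psi_r e(\mathbf{i}):=T_r P_r(\mathbf{i})e(\mathbf{i})$, where $P_r(\mathbf{i})$ is an invertible power series in $y_r,y_{r+1}$ chosen case-by-case according to the positions of $i_r$ and $i_{r+1}$ in the quiver $\Gamma$. The ansatz is engineered so that $\psi_r$ intertwines $e(\mathbf{i})$ and $e(s_r\mathbf{i})$ as required by \eqref{psiidemp}, and so that $\psi_r^2 e(\mathbf{i})$ reduces to the polynomial in $y_r,y_{r+1}$ prescribed by the case split in \eqref{psipsi}. Once $P_r(\mathbf{i})$ is fixed, verifying the commutation relations \eqref{ypsicomm}, \eqref{psicomm}, the mixed relations \eqref{psiyup}--\eqref{ypsidown}, and the braid relation \eqref{psibraid} becomes a bookkeeping exercise inside $\mathscr{H}_n^{\Lambda,q}$ using only the Hecke braid relation $T_rT_{r+1}T_r=T_{r+1}T_rT_{r+1}$ and the known commutators between the $T_r$ and the $L_s$.

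Having constructed $\Phi$ and shown it is an algebra homomorphism, surjectivity follows because the Ariki--Koike generators can be recovered from the proposed images: inverting $P_r(\mathbf{i})$ returns $T_r$ on each $e(\mathbf{i})$, and $T_0$ is reconstructed from $y_1$ and the idempotents via its eigenvalue decomposition. Injectivity then follows by a dimension comparison: both algebras have $\mathbb{F}$-dimension $l^n n!$, the Hecke side via Ariki--Koike's cellular basis and the KLR side via the standard monomial basis in the $\psi$'s, $y$'s and $e(\mathbf{i})$'s. The main obstacle is pinning down the power series $P_r(\mathbf{i})$: it must be tailored so that the quadratic relation \eqref{psipsi} \emph{and} the braid relation \eqref{psibraid} simultaneously hold, and the analysis must cover every local configuration of residues $i_r,i_{r+1},i_{r+2}$ in $\Gamma$, including the $e=2$ double-bond case which introduces the additional term $-2y_{r+1}+y_r+y_{r+2}$ appearing in \eqref{psibraid}.
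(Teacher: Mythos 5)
The paper does not prove this statement; it is cited directly as \cite[Main Theorem]{BK} (Brundan--Kleshchev, \emph{Blocks of cyclotomic Hecke algebras and Khovanov--Lauda algebras}), so there is no in-paper argument against which to compare your proposal. What you have written is, in outline, a reasonable sketch of the Brundan--Kleshchev construction itself: decompose the Ariki--Koike algebra via simultaneous generalised eigenspaces of the Jucys--Murphy elements to get the idempotents $e(\mathbf{i})$, use $(1-q^{-i_r}L_r)e(\mathbf{i})$ to manufacture nilpotent $y_r$'s, renormalise $T_r$ by power series in $y_r,y_{r+1}$ to produce $\psi_r$, then verify the relations and close with a dimension count $l^n\,n!$ on both sides.

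One inaccuracy worth flagging: your ansatz $\psi_r e(\mathbf{i}):=T_rP_r(\mathbf{i})e(\mathbf{i})$ with a purely multiplicative correction is not quite what Brundan and Kleshchev do. The element $T_r$ alone does not send the $\mathbf{i}$-weight space into the $s_r\mathbf{i}$-weight space (it fails to commute cleanly with the $L_r$'s when $i_r=i_{r+1}$), so one must first pass to the intertwining element $\Phi_r$, which is $T_r$ plus an additive rational correction in $L_r,L_{r+1}$. Only after this does one multiply by an invertible power series $Q_r(\mathbf{i})^{-1}$ to normalise the quadratic relation. So the correct shape is $\psi_r e(\mathbf{i})=(T_r+P_r(\mathbf{i}))Q_r(\mathbf{i})^{-1}e(\mathbf{i})$, and omitting the additive part would make \eqref{psiidemp} fail in the equal-residue case. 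Also note this paper works exclusively with $e\geqslant 3$ from \cref{sec:hookbipartSpecht} onward, so the $e=2$ case you mention is not needed for any application here, though of course the Brundan--Kleshchev theorem covers it.
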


In particular, $\mathscr{H}_n^{\Lambda}\cong\mathbb{F}\mathfrak{S}_n$ when $e=\text{char}(\mathbb{F})$ and $l=1$. Thus the cyclotomic Hecke algebras (of type $A$), and hence the symmetric group algebras, are non-trivially $\mathbb{Z}$-graded.

\section{Combinatorics}\label{sec:comb}

We introduce many well known combinatorial objects in this section, which we will use later to construct Specht modules for the cyclotomic Khovanov--Lauda--Rouquier algebras.

\subsection{Young diagrams and partitions}

A \emph{composition} of $n$ is a sequence $\lambda=(\lambda_1,\lambda_2,\lambda_3,\dots)$ of non-negative integers such that $\textstyle{\sum_{i=1}^{\infty}\lambda_i=n}$. For $i\geqslant{1}$, we refer to the integers $\lambda_i$ as the \emph{parts} of $\lambda$.
A \emph{partition} of $n$ is a composition $\lambda$ for which $\lambda_i\geqslant{\lambda_{i+1}}$ for all $i\geqslant{1}$. We denote the \emph{empty partition} $(0,0,\dots)$ by $\varnothing$ and define $(1^0):=\varnothing$.

We fix a positive integer $l$ and an $e$-multicharge $\kappa=(\kappa_1,\dots,\kappa_l)$.
We write $|\lambda^{(i)}|=\lambda_1^{(i)}+\lambda_2^{(i)}+\cdots$, and define an $l$-\emph{multicomposition} of $n$ to be an ordered $l$-tuple $\lambda=(\lambda^{(1)},\dots,\lambda^{(l)})$ of compositions such that $\textstyle{\sum_{i=1}^l|\lambda^{(i)}|=n}$. We refer to $\lambda^{(i)}$ as the $i^{\text{th}}$ \emph{component} of $\lambda$. When each component of an $l$-multicomposition $\lambda$ is a partition, we say that $\lambda$ is an $l$-\emph{multipartition}.
We abuse notation and also write $\varnothing$ for the \emph{empty multipartition} $(\varnothing,\dots,\varnothing)$. We denote the set of all $l$-multipartitions of $n$ by $\mathscr{P}_n^l$.

Given $l$-multicompositions $\lambda$ and $\mu$ of $n$, we say that $\lambda$ \emph{dominates} $\mu$, if
\[\sum_{i=1}^{m-1}|\lambda^{(i)}| + \sum_{j=1}^{k}\lambda_j^{(m)}
\geqslant\sum_{i=1}^{m-1}|\mu^{(i)}| + \sum_{j=1}^{k}\mu_j^{(m)},\]
for all $1\leqslant{m}\leqslant{l}$ and $k\geqslant{1}$. We write $\lambda\unrhd\mu$ to mean that $\lambda$ dominates $\mu$.

The \emph{Young diagram} of the $l$-multicomposition $\lambda=(\lambda^{(1)},\dots,\lambda^{(l)})$ is defined by

\[ [\lambda]:= \left\{ (i,j,m)\in \mathbb{N} \times \mathbb{N} \times \{1,\dots,l\}\:|\:1\leqslant{j}\leqslant{\lambda_i^{(m)}} \right\}. \]

Each element $(i,j,m)\in[\lambda]$ is called a \emph{node} of $\lambda$, and in particular, an $(i,j)$-node of $\lambda^{(m)}$. We draw the Young diagram of an $l$-multipartition as a column vector of Young diagrams $[\lambda^{(1)}],\dots, [\lambda^{(l)}]$ where $[\lambda^{(i)}]$ lies above $[\lambda^{(i+1)}]$ for all $i\geqslant{1}$. For example, $((5,3),(2^2,1))$ has the Young diagram

\[\gyoung(;;;;;,;;;,:,;;,;;,;).\]

The $e$-residue of a node $A=(i,j,m)$ lying in the space $\mathbb{N}\times\mathbb{N}\times\{1,\dots,l\}$ is defined by
\[\res A := \kappa_m+j-i \Mod{e}.\]

\subsection{Tableaux}

Let $\lambda=(\lambda^{(1)},\dots,\lambda^{(l)})\in\mathscr{P}_n^l$. A $\lambda$-tableau $\ttt=(\ttt^{(1)},\dots,\ttt^{(l)})$ is a bijection $\ttt:[\lambda]\rightarrow\{1,\dots,n\}$. Usually, we depict a $\lambda$-tableau $\ttt$ by inserting entries $1,\dots,n$ into the Young diagram $[\lambda]$; we say that the entry lying in node $(i,j,m)\in[\lambda]$ is the $(i,j,m)$-entry of $\ttt$, denoted $\ttt(i,j,m)$. We refer to the $\lambda^{(i)}$-tableau $\ttt^{(i)}$ as the \emph{$i^{\text{th}}$ component} of $\ttt$ for all $i\in\{1,\dots,l\}$. We say that $\ttt$ is \emph{row-standard} if the entries in each row increase from left to right along the rows of each component of $\ttt$. Similarly, we say that $\ttt$ is \emph{column-standard} if the entries in each column increase from top to bottom down the columns of each component of $\ttt$; we denote the set of all column-standard $\lambda$-tableaux by $\colstd(\lambda)$. If $\ttt$ is both row-standard and column-standard, then $\ttt$ is called \emph{standard}; we denote the set of all standard $\lambda$-tableaux by $\std(\lambda)$.

The \emph{column-initial tableau} $\ttt_{\lambda}$ is the $\lambda$-tableau whose entries $1,\dots,n$ appear in order down consecutive columns, working from left to right in components $l,l-1,\dots,1$, in turn.
For example,
\[\ttt_{((5,3),(2^2,1))}=\gyoung(;6;8;\ten;\twelve;\thirteen,;7;9;\eleven,,;1;4,;2;5,;3).\]

Given a $\lambda$-tableau $\ttt$, the symmetric group $\mathfrak{S}_n$ acts naturally on the left of $\ttt$. We define the permutation $w_{\ttt}\in\mathfrak{S}_n$ from
\[w_{\ttt} \ttt_{\lambda} =\ttt.\]
For example, if
\[\tts =\gyoung(;4;8;\ten;\eleven;\twelve,;7;9;\thirteen,,;1;5,2;6,;3),\]
then $w_{\tts} \ttt_{((5,3),(2^2,1))} = \tts$ where $w_{\tts}=(4\: 5\: 6)(11\: 13\: 12)$.

Let $\ttt$ be a $\lambda$-tableau. We write $r=\ttt(i,j,m)$ to denote that the integer entry $r$ lies in node $(i,j,m)\in[\lambda]$, and set $\res_{\ttt}(r)=\res (i,j,m)$. The \emph{residue sequence} of $\ttt$ is defined to be
\[\mathbf{i}_{\ttt}=(\res_{\ttt}(1),\dots,\res_{\ttt}(n)).\]
We set $\mathbf{i}_{\lambda}=\mathbf{i}_{\ttt_{\lambda}}$. For example, when $e=3$ and $\kappa=(0,1)$, the $3$-residues of the nodes in the Young diagram of $((5,3),(2^2,1))$ are given by
\[\gyoung(;0;1;2;0;1,;2;0;1,,;1;2,;0;1,;2),\]
so that $\mathbf{i}_{((5,3),(2^2,1))}=(1,0,2,2,1,0,2,1,0,2,1,0,1)$ and $\mathbf{i}_{\tts}=(1,0,2,0,2,1,2,1,0,2,0,1,1)$. We now define the idempotent generator of $\mathscr{H}_n^{\Lambda}$ with respect to $\ttt$ to be $e_{\ttt}:=e(\mathbf{i}_{\ttt})$.

Let $\lambda\in\mathscr{P}_n^l$ and $\ttt$ be a $\lambda$-tableau. Suppose that $\ttt(i_1,j_1,m)=r$ and $\ttt(i_2,j_2,m)=s$ (so that $r$ and $s$ both lie in the $m^{\text{th}}$ component of $\ttt$) such that $1\leqslant{r}\neq{s}\leqslant{n}$. We write $r\rightarrow_{\ttt}s$ if $i_1=i_2$ and $j_1<j_2$ and we write $r\downarrow_{\ttt}s$ if $i_1<i_2$ and $j_1=j_2$.

\begin{lem}\cite[Lemma 3.3]{BK}\label{lem:std}
Let $\lambda\in\mathscr{P}_n^l$ and $\ttt\in\std(\lambda)$. Then $s_r\ttt$ is also standard if and only if neither $r\rightarrow_{\ttt}r+1$ nor $r\downarrow_{\ttt}r+1$.
\end{lem}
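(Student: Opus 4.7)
The statement is an equivalence, so I would prove it by tackling each direction separately via a direct case analysis on the positions of $r$ and $r+1$ inside $\ttt$. Let $A_1=(i_1,j_1,m_1)$ be the node containing $r$ and $A_2=(i_2,j_2,m_2)$ the node containing $r+1$. Since $s_r\ttt$ differs from $\ttt$ only in these two nodes, row- and column-standardness can fail in $s_r\ttt$ only in the rows/columns meeting $A_1$ or $A_2$.

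For the forward direction, I would prove the contrapositive: if $r\rightarrow_{\ttt}r+1$ then $m_1=m_2$, $i_1=i_2$, and $j_1<j_2$, and since $\ttt$ is standard the entries strictly between $r$ and $r+1$ in row $i_1$ of component $m_1$ would contradict standardness unless $j_2=j_1+1$; then in $s_r\ttt$ the entry at $(i_1,j_1,m_1)$ is $r+1$ and at $(i_1,j_1+1,m_1)$ is $r$, violating row-standardness. The case $r\downarrow_{\ttt}r+1$ is symmetric, with a column-standardness violation.

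For the backward direction, assume neither $r\rightarrow_{\ttt}r+1$ nor $r\downarrow_{\ttt}r+1$ holds. If $m_1\ne m_2$, the swap affects two disjoint components; in each I would check the at-most-four neighbouring entries of the affected node, using the fact that any such neighbour $b>r$ automatically satisfies $b\ne r+1$ (since $r+1$ lives in the other component) and hence $b\ge r+2$, and dually for entries below $r$. If $m_1=m_2$ but $i_1=i_2$ or $j_1=j_2$, the assumption together with standardness of $\ttt$ rules out every sub-configuration, so we may assume $i_1\ne i_2$ and $j_1\ne j_2$. Then row $i_1$ and column $j_1$ of component $m_1$ each see $r$ replaced by $r+1$, and their neighbours $b$ satisfy $b>r$ and $b\ne r+1$ (the latter because the node of $r+1$ lies in a different row and column), giving $b\ge r+2$; symmetrically for row $i_2$ and column $j_2$, entries $a<r+1$ with $a\ne r$ satisfy $a\le r-1$. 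Hence both row- and column-standardness are preserved, so $s_r\ttt\in\std(\lambda)$.

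The argument is a routine case check; the only mildly delicate point is the observation that, when the two nodes lie in the same component but in different rows and different columns, every neighbour of $A_1$ is distinct from $A_2$ (and vice versa), which is exactly what lets us sharpen the inequalities from $b>r$ to $b\ge r+2$ and from $a<r+1$ to $a\le r-1$. I do not foresee a substantive obstacle; the lemma is essentially a bookkeeping statement about how a simple transposition interacts with standardness.
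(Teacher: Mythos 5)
Your argument is correct: the forward direction shows that $r\rightarrow_{\ttt}r+1$ forces $r$ and $r+1$ to be in horizontally adjacent nodes (and $r\downarrow_{\ttt}r+1$ vertically adjacent), so that $s_r\ttt$ violates row- (resp.\ column-) standardness, and the backward direction checks that when the two nodes lie in different components, or in the same component but in distinct rows and distinct columns, every neighbour of one node is strictly separated in value from the swapped entry because it cannot be the other node. The paper itself gives no proof here --- it cites the lemma directly from Brundan--Kleshchev (their Lemma~3.3) --- and your bookkeeping argument is essentially the standard one for this fact, so there is nothing substantive to contrast with the source.
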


We let $\lambda\in\mathscr{P}_n^l$ and now define a dominance order on $\lambda$-tableaux with respect to the Bruhat order on $\mathfrak{S}_n$. Let $\tts$ and $\ttt$ be $\lambda$-tableaux with corresponding reduced expressions $w_{\tts}$ and $w_{\ttt}$, respectively. Then we say that \emph{$\ttt$ dominates $\tts$}, written $\tts\trianglelefteq{\ttt}$, if and only if $w_{\tts}\leqslant{w_{\ttt}}$.

\section{Graded Specht modules}\label{sec:Specht}

In this section we introduce the main objects of our study, graded Specht modules, following the theory of Brundan, Kleshchev and Wang in \cite{BKW}. We will work with the \emph{dual} Specht module throughout, however, we will refer to it  as the Specht module itself for brevity, consistent with James' classical construction of Specht modules over $\mathbb{F}\mathfrak{S}_n$.

Recall that the presentation of Specht modules for $\mathbb{F}\mathfrak{S}_n$, as constructed by James, includes \emph{Garnir relations} \cite[\S 7]{James}, which are far from straightforward to write down. We will see that Specht modules as $\mathscr{H}_n^{\Lambda}$-modules must also satisfy Garnir relations that are arguably even more complicated than those for the symmetric group. We now present the combinatorics needed to define these Garnir relations; see \cite[\S 7]{KMR} for further details.

\subsection{Garnir tableaux and Garnir relations}\label{Garnir}

For $\lambda\in\mathscr{P}_n^l$, we call a node $A=(i,j,m)\in[\lambda]$ such that $(i,j+1,m)\in[\lambda]$ a \emph{\textup{(}column\textup{)} Garnir node of $\lambda$}. The \emph{\textup{(}column\textup{)} $A$-Garnir belt} $\mathbf{B}_A$ is defined to be the set of nodes
\[\mathbf{B}_A=\left\{(k,j,m)\in[\lambda]\:|\:k\geqslant{i}\right\}\cup
\left\{(k,j+1,m)\in[\lambda]\:|\:1\leqslant{k}\leqslant{i}\right\}.\]
For example, $\mathbf{B}_{(3,1,1)}$ in $((4^2,2,1^2),(2))$ is shaded in the following Young diagram
\[\gyoung(;!\gr;!\wh;;,;!\gr;!\wh;;!\gr,;;,;,;!\wh,,;;).\]

Let $r=\ttt_{\lambda}(i,j,m)$ and $s=\ttt_{\lambda}(i,j+1,m)$. We place the entries $r,r+1,\dots,s$ in $\mathbf{B}_A$ in order from top right to bottom left. The resulting $\lambda$-tableau is called the \emph{\textup{(}column\textup{)} $A$-Garnir tableau}, denoted $G_A$. The (column) $(3,1,1)$-Garnir $((4^2,2,1^2),(2))$-tableau is
\[G_{(3,1,1)}=\gyoung(;3!\gr5!\wh\eleven;\thirteen,;4!\gr6!\wh\twelve;\fourteen,!\gr8;7,;9,;\ten,,!\wh1;2)
=\su{7}{9}\su{6}{8}\su{5}{7}\ttt_{((4^2,2,1^2),(2))}.\]

A \emph{\textup{(}column\textup{)} $A$-brick} is a set of $e$ consecutive nodes
\[
\left\{(a,b,m),(a+1,b,m),\dots,(a+e-1,b,m)
\right\}\subseteq{\mathbf{B}_A}
\]
such that $\res(a,b,m)=\res A$. Suppose that there are $k$ bricks lying in the Garnir belt $\mathbf{B}_A$. If $k>0$, then we label the bricks $B_A^1,B_A^2,\dots,B_A^k$ in $\mathbf{B}_A$ from top to bottom, firstly down column $j+1$ and then down column $j$. For $e=3$, the $(3,1,1)$-Garnir belt in our running example has two bricks, labelled in the following Young diagram

\[
\Yboxdim{1cm}
\begin{tikzpicture}[scale=.5]
\tyng(0cm,0cm,4^2,2,1^2,0,2)
\draw[white,line width=2pt](0,-4)--(0,-5);
\draw[line width=2pt](1,1)--(2,1)--(2,-2)--(1,-2)--(1,1);
\draw[line width=2pt] (0,-1)--(1,-1)--(1,-4)--(0,-4)--(0,-1);
\node at (-3,0) {$B_{(3,1,1)}^1$};
\node at (-3,-2.5) {$B_{(3,1,1)}^2$};
\draw[->](-1.8,0)--(1.5,-0.5);
\draw[->] (-1.8,-2.5)--(0.5,-2.5);
\end{tikzpicture}
\]

Let $n_A$ be the smallest number in the Garnir tableau $G_A$ in $\mathbf{B}_A$ that also lies in a brick. We define \emph{brick permutations} of $\mathfrak{S}_n$ by
\[w_A^r:=\prod_{a=n_A+e(r-1)}^{n_A+re-1} (a,a+e) \in \mathfrak{S}_n\]
for each $r\in\{1,\dots,k-1\}$. Informally, the brick permutation $w_A^r$ swaps the $r^{\text{th}}$ and $(r+1)^{\text{th}}$ bricks in $\mathbf{B}_A$. Let the \emph{\textup{(}column\textup{)} brick permutation group} be
\[\mathfrak{S}_k\cong\mathfrak{S}_A=\left\langle{w_A^1,w_A^2,\dots,w_A^{k-1}}\right\rangle
\subseteq{\mathfrak{S}_n}.\]

We let $\ttt_A$ be the $\lambda$-tableau obtained by placing the bricks $B_A^1,B_A^2,\dots,B_A^k$ successively down column $j$ and then down column $j+1$ in $[\lambda]$.
The set of \emph{$A$-Garnir $\lambda$-tableaux} is defined to be
\[\gar_A=\left\{\ttt\in\colstd(\lambda)\:|\:\ttt=w\ttt_A \text{ for a brick permutation } w\in\mathfrak{S}_A\right\}.\]
By the construction of $\gar_A$, we know that $\mathbf{i}_{\ttt}=\mathbf{i}_{G_A}$ for all $\ttt\in\gar_A$. We thus set $\mathbf{i}_A:=\mathbf{i}_{G_A}$ to be the residue sequence of every tableau lying in $\gar_A$.

In our running example, notice that $\ttt_{(3,1,1)}=\ttt_{((4^2,2,1^2),(2))}$. We now observe that there is only one brick permutation $w_{(3,1,1)}^1=\s{7}{5}\s{8}{6}\s{9}{7}$, so that $\mathfrak{S}_{(3,1,1)}$ is generated by $w_{(3,1,1)}^1$. We have $w_{(3,1,1)}^1\ttt_{((4^2,2,1^2),(2))}=G_{(3,1,1)}$, and hence

\[\gar_{(3,1,1)}=\left\{G_{(3,1,1)},\ttt_{((4^2,2,1^2),(2))}\right\}.\]

From this set of tableaux, we obtain the \emph{Garnir elements} $g_A$ for each Garnir node $A\in[\lambda]$. In general, the Garnir elements are very complicated to compute; we refer the reader to \cite[\S 7.5]{KMR} for further details. From our example, we find that the Garnir element of $(3,1,1)$ is 

\[g_{(3,1,1)}=\psi_7\psi_6\psi_5\psi_8\psi_7\psi_6\psi_9\psi_8\psi_7
e(\mathbf{i}_{(3,1,1)})-2e(\mathbf{i}_{(3,1,1)}).\]

For this paper, we specifically require the Garnir elements of Garnir nodes lying in the bipartitions $((n-m),(1^m))$ and $((n-m,1^m),\varnothing)$, which are particularly easy to find.

\subsubsection{Garnir elements of $((n-m),(1^m))$}\label{Garnir1}

Let $\lambda=((n-m),(1^m))$ and $A_i=(1,i,1)$ for $i\in\mathbb{N}$. Then the complete set of Garnir nodes of $\lambda$ is
\[\left\{A_i\:|\:1\leqslant{i}\leqslant{n-m-1}\right\}.\]
The $A_i$-Garnir belt $\mathbf{B}_{A_i}$ consists only of the two consecutive nodes $(1,i,1)$ and $(1,i+1,1)$ in the first component of $\lambda$, as shown in the following shaded Young diagram
\[\gyoung(;;\hdts!\gr;\aye;!\wh;\hdts;,,;,|\sesqui\vdts,;).\]
Thus, the $A_i$-Garnir $\lambda$-tableau is 

\[G_{A_i}=\gyoungxy(3,1,;\emplusone;\hdts;\emplusiminusone!\gr\emplusiplusone;\emplusi!\wh\emplusiplustwo;\hdts;n,,;1,;2,|\sesqui\vdts,;m).\]
Notice that $\ttt_{A_i}=G_{A_i}$.
We write $G_{A_i}=s_{m+i}\ttt_{\lambda}$. For $e\geqslant{3}$, $G_{A_i}$ has no $A_i$-bricks and hence
\[
\gar_{A_i}=\left\{G_{A_i}\right\}.
\]
We set $\psi_{G_{A_i}}=\psi_{m+i}$. It follows that the Garnir element of $A_i$ is defined to be
\[
g_{A_i}=e(\mathbf{i}_{A_i})\psi_{G_{A_i}}=e(\mathbf{i}_{A_i})\psi_{m+i}=
\psi_{m+i}e(\mathbf{i}_{\lambda})\quad(\text{by \cref{psiidemp}}).
\]

\subsubsection{Garnir elements of $((n-m,1^m),\varnothing)$}\label{Garnir2}

Let $\lambda=((n-m,1^m),\varnothing)$ and $A_i=(1,i+1,1)$ for all $i\in\{0,\dots,n-m-2\}$. From the Garnir elements of Garnir nodes in $((n-m),(1^m))$ given above, it follows that $\lambda$ has Garnir elements
\[
g_{A_i}=\psi_{m+i+1}e(\mathbf{i}_{\lambda}), \quad \forall{i}\in\{1,\dots,n-m-2\}.
\]
We first find the Garnir element of node $A_0=(1,1,1)$. The $(1,1,1)$-Garnir belt is
$\mathbf{B}_{A_0}=
\left\{(j,1,1)\:|\:1\leqslant{j}\leqslant{m+1}\right\}
\cup\left\{(1,2,1)\right\}$, depicted by the shaded area in the following Young diagram of $\lambda$
\[
\gyoung(!\gr;\ayezero;!\wh;;\hdts;!\gr,;,|\sesqui\vdts,;,,:\emset).
\]
The $A_0$-Garnir tableau is
\[
G_{A_0}=\gyoungxy(2.2,1,!\gr2;1!\wh\emplusthree;\emplusfour;\hdts;n,!\gr3,|\sesqui\vdts,;\emplustwo,,:\emset),
\]
where $G_{A_0}=s_1s_2\dots{s_{m+1}}\ttt_{((n-m,1^m),\varnothing)}$. 
Notice that $\ttt_{A_0}=G_{A_0}$. There are $\lfloor\tfrac{m+1}{e}\rfloor$ bricks $B_{A_0}^1$, $B_{A_0}^2,\dots,B_{A_0}^{\lfloor{(m+1)}/{e}\rfloor}$ lying in the first column of $G_{A_0}$, so that
\[
\mathfrak{S}_{A_0}=
\left\langle{w_{A_0}^1,w_{A_0}^2,\dots,w_{A_0}^{\lfloor{(m+1-e)}/{e}\rfloor}}\right\rangle
\cong \mathfrak{S}_{\lfloor{(m+1)}/{e}\rfloor}.
\]
It is clear that $w_{A_0}^rG_{A_0}$ is not a column-standard $\lambda$-tableau for each $r\in\{1,\dots,\lfloor{(m+1-e)}/{e}\rfloor\}$. Hence,
\[
\gar_{A_0}=\left\{G_{A_0}\right\}.
\]
We set $\psi_{G_{A_0}}=\psi_1\psi_2\dots\psi_{m+1}$. It follows that the Garnir element of $A_0$ is 
\[
g_{A_0}=e(\mathbf{i}_{A_0})\psi_{G_{A_0}}
=e(\mathbf{i}_{A_0})\psi_1\psi_2\dots\psi_{m+1}
=\psi_1\psi_2\dots\psi_{m+1}e(\mathbf{i}_{\lambda})\quad(\text{by \cref{psiidemp}}).
\]

\subsection{Homogeneous presentation of Specht modules}\label{Spechtpresentation}

Kleshchev, Mathas and Ram provide the following presentation of Specht modules.

\begin{defn}\cite[Definition 7.11]{KMR}\label{Spechtdef}
Let $\alpha\in{Q_+}$ such that $\het(\alpha)=n$ and let $\lambda\in\mathscr{P}_n^l$. The \textup{(}column\textup{)} \emph{Specht module} $S_{\lambda}$ is the $\mathscr{H}_{\alpha}$-module generated by $z_{\lambda}$ subject only to the \emph{defining relations}:
\begin{itemize}
\item{$e(\mathbf{i_{\lambda}})z_{\lambda}=z_{\lambda}$;}
\item{$y_r z_{\lambda}=0$ for all $r\in\{1,\dots,n\}$;}
\item{$\psi_r z_{\lambda}=0$ for all $i\in\{1,\dots,n-1\}$ such that $r$ and $r+1$ lie in the same column of $\ttt_{\lambda}$;}
\item{$g_Az_{\lambda}=0$ for all Garnir nodes $A$ in $[\lambda]$.}
\end{itemize}
\end{defn}

\subsection{A standard homogeneous basis of Specht modules}

We let every $w\in\mathfrak{S}_n$ have a fixed reduced expression $w=s_{r_1}s_{r_2}\dots s_{r_k}$ throughout, and refer to it as the \emph{preferred reduced expression of $w$}. We define the associated element of $\mathscr{H}_n^{\Lambda}$
\[
\psi_w
:=\psi_{r_1}\psi_{r_2}\dots{\psi_{r_k}},
\]
which in general depends on the choice of a preferred reduced expression of $w$.
For $\lambda\in\mathscr{P}_n^l$ and a $\lambda$-tableau $\ttt$, recall that $w_{\ttt}\in\mathfrak{S}_n$ is defined from $\ttt=w_{\ttt}\ttt_{\lambda}$. We now define the vector
\[
v_{\ttt}:=\psi_{w_{\ttt}}z_{\lambda}\in S_{\lambda}.
\]
In particular, we have $v_{\ttt_{\lambda}}=z_{\lambda}$.

\begin{lem}\label{lem:idemp}
Let $\lambda\in\mathscr{P}_n^l$, $\ttt$ be a $\lambda$-tableau and suppose that $v_{\ttt}=\psi_{w_{\ttt}}z_{\lambda}\in S_{\lambda}$ for some reduced expression $w_{\ttt}\in\mathfrak{S}_n$. Then $e(\mathbf{i})v_{\ttt}=\delta_{\mathbf{i},w_{\ttt}\mathbf{i}_{\lambda}}v_{\ttt}$.
\end{lem}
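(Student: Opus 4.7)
The plan is to exploit the commutation relation $\psi_r e(\mathbf{i}) = e(s_r\mathbf{i})\psi_r$ from \cref{psiidemp} together with the orthogonality of idempotents and the defining relation $e(\mathbf{i}_\lambda)z_\lambda = z_\lambda$ of $S_\lambda$ from \cref{Spechtdef}. The strategy is to push the idempotent $e(\mathbf{i}_\lambda)$, which is hidden inside $z_\lambda$, to the left through the word $\psi_{w_\ttt}$, producing $e(w_\ttt \mathbf{i}_\lambda)$ on the far left.

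More precisely, fix the preferred reduced expression $w_\ttt = s_{r_1} s_{r_2} \cdots s_{r_k}$, so that $\psi_{w_\ttt} = \psi_{r_1}\psi_{r_2}\cdots\psi_{r_k}$. First I would write
\[
v_\ttt \;=\; \psi_{w_\ttt} z_\lambda \;=\; \psi_{r_1}\cdots\psi_{r_k}\, e(\mathbf{i}_\lambda) z_\lambda,
\]
using that $e(\mathbf{i}_\lambda)z_\lambda = z_\lambda$. Then I would apply \cref{psiidemp} inductively (from right to left) to move the idempotent past each $\psi_{r_j}$, obtaining
\[
\psi_{r_1}\cdots\psi_{r_k}\, e(\mathbf{i}_\lambda) \;=\; e(s_{r_1}s_{r_2}\cdots s_{r_k} \mathbf{i}_\lambda)\, \psi_{r_1}\cdots\psi_{r_k} \;=\; e(w_\ttt \mathbf{i}_\lambda)\, \psi_{w_\ttt}.
\]
This yields $v_\ttt = e(w_\ttt \mathbf{i}_\lambda)\, v_\ttt$.

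The conclusion then follows immediately from the orthogonality of the idempotents $e(\mathbf{i})e(\mathbf{j}) = \delta_{\mathbf{i},\mathbf{j}} e(\mathbf{i})$, since
\[
e(\mathbf{i})v_\ttt \;=\; e(\mathbf{i})\, e(w_\ttt \mathbf{i}_\lambda)\, v_\ttt \;=\; \delta_{\mathbf{i},\, w_\ttt \mathbf{i}_\lambda}\, v_\ttt.
\]
There is essentially no obstacle here: the only subtle point is checking that the inductive commutation goes through regardless of the chosen reduced expression, but this is clear because each individual application of \cref{psiidemp} only uses that $\psi_r$ conjugates $e(\mathbf{i})$ to $e(s_r\mathbf{i})$. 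The claim is thus independent of the choice of preferred reduced expression for $w_\ttt$.
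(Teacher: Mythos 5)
Your proof is correct and takes essentially the same approach as the paper: both arguments commute an idempotent through $\psi_{w_{\ttt}}$ using \cref{psiidemp} and then apply orthogonality of the $e(\mathbf{i})$'s together with the relation $e(\mathbf{i}_\lambda)z_\lambda = z_\lambda$. The only cosmetic difference is direction — you push $e(\mathbf{i}_\lambda)$ leftward to obtain $v_{\ttt} = e(w_{\ttt}\mathbf{i}_\lambda)v_{\ttt}$, whereas the paper pushes $e(\mathbf{i})$ rightward onto $z_\lambda$ — but the substance is identical.
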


\begin{proof}
By employing \cref{psiidemp}, we have that $e(\mathbf{i})v_{\ttt}=e(\mathbf{i})\psi_{w_{\ttt}}z_{\lambda}=\psi_{w_{\ttt}}e(w_{\ttt}^{-1}\mathbf{i})z_{\lambda}$. If $\mathbf{i}=w_{\ttt}\mathbf{i}_{\lambda}$, then $\psi_{w_{\ttt}}e(w_{\ttt}^{-1}\mathbf{i})z_{\lambda}=w_{\ttt}e({\mathbf{i}}_{\lambda})z_{\lambda}=w_{\ttt}z_{\lambda}=v_{\ttt}$, by the first defining relation in \cref{Spechtdef}. However, we have that $w_{\ttt}^{-1}\mathbf{i}\neq\mathbf{i}_{\lambda}$ if $\mathbf{i}\neq w_{\ttt}\mathbf{i}_{\lambda}$, and hence $\psi_{w_{\ttt}}e(w_{\ttt}^{-1}\mathbf{i})z_{\lambda}=0$ by \cref{Spechtdef}.
\end{proof}

Whilst the elements $v_{\ttt}$ of $S_{\lambda}$ also depend on the choice of a preferred reduced expression, in general, the following result does not.

\begin{thm}\cite[Corollary 4.6]{BKW}
\label{gradedmod}
For $\lambda\in\mathscr{P}_n^l$, the set of vectors $\{v_{\ttt}\mid{\ttt\in\std(\lambda)}\}$ is a homogeneous $\mathbb{F}$-basis of $S_{\lambda}$. Moreover, $v_{\tts}$ for any $\lambda$-tableau $\tts$ can be written as a linear combination of $\mathbb{F}$-basis elements $v_{\ttt}$ such that $\tts\trianglerighteq{\ttt}$.
\end{thm}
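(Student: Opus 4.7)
My approach is to establish the three assertions of the theorem in the following order: the triangular expansion of $v_\tts$ for any $\lambda$-tableau $\tts$ (which immediately yields spanning by restricting to $\tts \in \std(\lambda)$ and also gives the ``moreover'' clause), then linear independence via a dimension comparison, and finally homogeneity as an immediate consequence of the grading. The main work is the triangular expansion.

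To prove this triangular expansion, I would induct on the Bruhat length of $w_\tts$. If $\tts \in \std(\lambda)$ the claim is trivial, so suppose $\tts$ is non-standard. By \cref{lem:std}, some pair $(r,r{+}1)$ satisfies either $r\to_\tts r+1$ or $r\downarrow_\tts r+1$. In the column-adjacent case $r\downarrow_\tts r+1$, I would rewrite the reduced expression $w_\tts = s_{r_1}\cdots s_{r_k}$ so that an occurrence of some $s_s$ with $s, s+1$ column-adjacent in $\ttt_\lambda$ can be migrated to act directly on $z_\lambda$; the defining relation $\psi_s z_\lambda = 0$ from \cref{Spechtdef}, combined with the commutation \cref{psicomm}, braid \cref{psibraid}, and quadratic \cref{psipsi} relations, kills this term and produces correction vectors $v_\ttu$ of strictly smaller Bruhat length. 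In the row-adjacent case $r\to_\tts r+1$, I would invoke the Garnir relation $g_A z_\lambda = 0$ for the relevant Garnir node $A$ to rewrite $v_\tts$ modulo strictly lower-order terms. Each correction term is then disposed of by the inductive hypothesis, and the resulting combination involves only $v_\ttu$ with $\ttu \trianglelefteq \tts$.

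For linear independence, I would appeal to the Graded Isomorphism Theorem of \cite{BK}: the corresponding cyclotomic Hecke algebra carries a cellular structure whose cell module for $\lambda$ has dimension exactly $|\std(\lambda)|$. Transporting this dimension count along the isomorphism forces the spanning set $\{v_\ttt \mid \ttt \in \std(\lambda)\}$ to be a basis. Homogeneity of each $v_\ttt$ then follows formally: $z_\lambda$ is homogeneous because the idempotent $e(\mathbf{i}_\lambda)$ fixes it and carries degree zero, and each generator $\psi_r e(\mathbf{i})$ of $\mathscr{H}_n^\Lambda$ has a well-defined homogeneous degree determined by the Cartan data, so the product $\psi_{w_\ttt} z_\lambda$ is homogeneous.

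The principal obstacle lies in the inductive step of the triangularity claim, specifically in the row-adjacent case: as emphasised in \cref{Garnir}, the Garnir elements $g_A$ are in general very intricate, so one must carefully analyse how the resulting expansion interacts with the KLR braid and quadratic relations, and verify that every correction term that appears is genuinely dominated by $\tts$. A secondary subtlety is that $v_\ttt$ a priori depends on the choice of preferred reduced expression for $w_\ttt$; but the triangular expansion shows that any two such choices differ by a combination of strictly smaller $v_\ttu$, so the resulting basis is well-defined once a choice has been fixed.
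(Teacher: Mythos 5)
The paper does not prove this result: it is quoted verbatim from \cite[Corollary 4.6]{BKW}, so there is no internal argument against which to check your proposal. Judged on its own merits, your sketch follows the same broad strategy as Brundan--Kleshchev--Wang: a triangular straightening to establish spanning and the dominance expansion, a dimension count to force linear independence, and homogeneity by construction. That said, a couple of points need tightening.

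First, your invocation of \cref{lem:std} is off target. That lemma concerns a \emph{standard} tableau $\ttt$ and characterises when $s_r\ttt$ remains standard; it does not assert that an arbitrary non-standard $\tts$ must contain a pair of consecutive entries $r,r{+}1$ that are row- or column-adjacent. Indeed, a non-row-standard tableau may have a decreasing pair in a row whose entries are far apart numerically, with no offending consecutive pair anywhere. The straightening has to be organised differently: one works with a reduced expression $w_\tts=s_{r_1}\cdots s_{r_k}$ and shows that if $\tts$ is not standard one can either peel off a factor $\psi_{r_j}$ that reaches $z_\lambda$ and is killed by \cref{Spechtdef}, or else $\tts$ agrees with a Garnir tableau $G_A$ on a Garnir belt so that $g_A z_\lambda=0$ applies, in each case at the cost of terms of strictly smaller Bruhat length controlled by \cref{psicomm,psibraid,psipsi}. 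The inductive bookkeeping in the row-adjacent (Garnir) case is genuinely the hard part of the BKW argument and your sketch waves at it without isolating the correct reduction.

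Second, the linear-independence step via the BK isomorphism is sound in outline but needs the identification of the KLR Specht module $S_\lambda$ of \cref{Spechtdef} with the classical (Dipper--James--Mathas) Specht/cell module under that isomorphism; the dimension $|\std(\lambda)|$ is not automatic from the presentation alone. BKW handle this by matching their graded construction against the known ungraded one, and some remark to that effect is required for the count to close. The homogeneity argument is fine: each $\psi_r e(\mathbf{i})$ has a well-defined degree, $e(\mathbf{i}_\lambda)z_\lambda=z_\lambda$ forces $z_\lambda$ to be homogeneous, and $v_\ttt=\psi_{w_\ttt}z_\lambda$ inherits homogeneity. Your closing observation that different preferred reduced expressions change $v_\ttt$ only by strictly dominated terms is correct and is exactly the content of the last clause of the theorem.
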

We call this basis the \emph{standard homogeneous basis of $S_{\lambda}$} and remark that Specht modules are naturally $\mathbb{Z}$-graded $\mathscr{H}_n^{\Lambda}$-modules.

\section{Specht modules labelled by hook bipartitions}\label{sec:hookbipartSpecht}

In this section, we begin our study of a particular family of Specht modules in level two of $\mathscr{H}_n^{\Lambda}$, namely those labelled by \emph{hook bipartitions}, with quantum characteristic at least three. We thus fix $e\in\{3,4,\dots\}$ and $l=2$ from now on.
We first give the presentations of Specht modules labelled by hooks and Specht modules labelled by hook bipartitions. We then describe the standard basis elements of the latter Specht modules and show how $\mathscr{H}_n^{\Lambda}$ acts on these elements.
Moreover, we find non-trivial homomorphisms between Specht modules labelled by certain bipartitions, in particular hook bipartitions, which fit into exact sequences.

We define a \emph{hook bipartition of $n$} to be a bipartition of the form $((n-m),(1^m))$ for all $m\in\{0,\dots,n\}$. We will refer to the first component of a hook bipartition as its \emph{arm} and to its second component as its \emph{leg}.

\subsection{Specht module presentations for hooks and hook bipartitions}

It follows from \cref{Spechtdef} that we can explicitly write down the Specht module presentations of $S_{((n-m),(1^m))}$ and $S_{((n-m,1^m),\varnothing)}$ (and hence, also for $S_{(\varnothing,(n-m,1^m))}$) since we determined the Garnir relations for these Specht modules in \cref{Garnir1,Garnir2}, respectively.

\begin{defn}\label{cor:spechtpres}
	\begin{enumerate}
		\item The Specht module $S_{((n-m),(1^{m}))}$ has a presentation given by
		\begin{equation*}
		\stretchleftright[250]
		{\langle}
		{
			z_{((n-m),(1^{m}))}\left |
			\begin{array}{l l}
			e(\mathbf{i}_{((n-m),(1^{m}))})z_{((n-m),(1^{m}))}=z_{((n-m),(1^{m}))}, \\
			y_rz_{((n-m),(1^{m}))}=0\ \forall\ r\in\{1,\dots,n\}, \\
			\psi_rz_{((n-m),(1^{m}))}=0\ \forall\ r\in\{1,\dots,m-1\}\cup\{m+1,\dots,n-1\}
			\end{array}\right.
		}
		{\rangle}.
		\end{equation*}
		
		\item The Specht module $S_{((n-m,1^m),\varnothing)}$ has a presentation given by
		\begin{equation*}
		\stretchleftright[350]
		{\langle}
		{
			z_{((n-m,1^m),\varnothing)}\left |
			\begin{array}{l l}
			e(\mathbf{i}_{((n-m,1^m),\varnothing)})z_{((n-m,1^m),\varnothing)}=
			z_{((n-m,1^m),\varnothing)}, \\
			y_rz_{((n-m,1^m),\varnothing)}=0\ \forall\ r\in\{1,\dots,n\}, \\
			\psi_rz_{((n-m,1^m),\varnothing)}=0 \ \forall\ r\in\{1,\dots,m\}\cup\{m+2,\dots,n-1\}, \\
			\psi_1\dots\psi_{m+1}z_{((n-m,1^m),\varnothing)}=0
			\end{array}\right.
		}
		{\rangle}.
		\end{equation*}
	\end{enumerate}
\end{defn}

\subsection{Homogeneous basis elements of $S_{((n-m),(1^m))}$}

Given a standard $((n-m),(1^m))$-tableau $\ttt$, we write $a_j:=\ttt(j,1,2)$ for all $j\in\{1,\dots,m\}$. Then $\ttt$ is completely determined by $a_1,\dots,a_m$. We write
	\[\ttt=w_{\ttt}\ttt_{((n-m),(1^m))}\in\std((n-m),(1^m)),\]
where
	\[w_{\ttt}=\s{a_1-1}{1}\s{a_2-1}{2}\dots\s{a_m-1}{m}\in\mathfrak{S}_n\]
is a reduced expression.
If $a_i=i$ for all $i\in\{1,\dots,m\}$, then $\ttt=\ttt_{((n-m),(1^m))}$.
For $1\leqslant{i}\leqslant{j}<n$, we define
	\[\Psid{j}{i}:=\psi_j\psi_{j-1}\dots\psi_i;\qquad \Psiu{i}{j}:=\psi_i\psi_{i+1}\dots\psi_{j}.\]
We can now write
	\[v_{\ttt}=\psi_{w_{\ttt}}z_{((n-m),(1^m))}\in S_{((n-m),(1^m))},\]
where
	\[\psi_{w_{\ttt}}=\Psid{a_1-1}{1}\Psid{a_2-1}{2}\dots\Psid{a_m-1}{m}\in\mathscr{H}_n^{\Lambda}.\]
The vectors $v_{\ttt}\in S_{((n-m),(1^m))}$, as $\ttt$ runs over all standard $((n-m),(1^m))$-tableaux, form a basis for $S_{((n-m),(1^m))}$.
For brevity, we write $v(a_1,\dots,a_m):=v_{\ttt}$. Hence, if $a_i=i$ for all $i\in\{1,\dots,m\}$, then $v(1,\dots,m)=v_{\ttt}=z_{((n-m),(1^m))}$.

\subsection{The action of $\mathscr{H}_n^{\Lambda}$ on $S_{((n-m),(1^m))}$}

We now study the explicit action of the cyclotomic Khovanov--Lauda--Rouquier algebra on basis elements of Specht modules labelled by hook bipartitions. Similarly, Speyer \cite[\S{5}]{Spey} studied the action of the Iwahori--Hecke algebra of type $A$ on Specht modules labelled by hook partitions in quantum characteristic two, and used this action to determine the decomposability of these Specht modules. The computations given in this section and \cref{subsec:furtheraction}, which rely on the presentation of $\mathscr{H}_n^{\Lambda}$, are much like those presented by Speyer.

We determine when the basis elements $v_{\ttt}$ of $S_{((n-m),(1^m))}$ are killed by the generators $\psi_1,\dots,\psi_{n-1}$ of $\mathscr{H}_n^{\Lambda}$.

\begin{lem}\label{psigeneratorlem}
Suppose that $1\leqslant{a_1}<\dots<a_m\leqslant{n}$ and $1\leqslant{i}<n$.
	\begin{enumerate}
	\item{ Let $i\not\equiv{2+\kappa_2-\kappa_1}\Mod{e}$. Then $\psi_iv(a_1,\dots,a_m)=0$ if $i,i+1\in\{a_1,\dots,a_m\}$.	}
	\item{ Let $i\not\equiv{\kappa_2-\kappa_1}\Mod{e}$. Then $\psi_iv(a_1,\dots,a_m)=0$ if $i,i+1\not\in\{a_1,\dots,a_m\}$.	}
	\end{enumerate}
\end{lem}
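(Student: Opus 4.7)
My plan is to compute $\psi_i v(a_1, \ldots, a_m)$ directly in $\mathscr{H}_n^\Lambda$, using the KLR relations and the presentation of $S_\lambda$ from \cref{cor:spechtpres}. I treat the two parts in parallel; degenerate cases (where $\psi_i$ commutes with all of $\psi_{w_\ttt}$, or where $\psi_i = \psi_r$ acts directly on $z_\lambda$ as zero) follow immediately from the defining relations, so I concentrate on the generic case.

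For part (1), let $a_j = i$ and $a_{j+1} = i+1$, and factor
\[
\psi_{w_\ttt} = U \cdot \Psid{i-1}{j}\Psid{i}{j+1} \cdot V,
\]
where $U = \Psid{a_1-1}{1}\cdots\Psid{a_{j-1}-1}{j-1}$ only involves $\psi_r$ with $r \leq a_{j-1}-1 \leq i-2$ and $V = \Psid{a_{j+2}-1}{j+2}\cdots\Psid{a_m-1}{m}$ only involves $\psi_r$ with $r \geq j+2$, so that both commute with $\psi_i$ by \cref{psicomm}. Expanding the middle product and commuting the second $\psi_i$ leftwards past $\psi_{i-2},\ldots,\psi_j$ rewrites
\[
\psi_i\Psid{i-1}{j}\Psid{i}{j+1} = \psi_i\psi_{i-1}\psi_i \cdot \Psid{i-2}{j}\Psid{i-1}{j+1}.
\]

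To apply the braid relation \cref{psibraid} to $\psi_i\psi_{i-1}\psi_i$, I compute the residue triple at positions $i-1, i, i+1$ of the idempotent immediately to its right. Tracking the action of the permutation $\Psid{i-2}{j}\Psid{i-1}{j+1}V$ on $\mathbf{i}_\lambda$ yields this triple as $(\kappa_2 + 1 - j,\,\kappa_2 - j,\,\kappa_1 + i - j - 1)$; its first and third entries are congruent modulo $e$ precisely when $i \equiv \kappa_2 - \kappa_1 + 2 \pmod{e}$, so the hypothesis excludes the two non-trivial cases of \cref{psibraid} and the correction term vanishes. Part (2) follows the same strategy: writing $\ell$ for the largest index with $a_\ell < i$ (so $a_{\ell+1} \geq i+2$), I isolate the factor $\Psid{a_{\ell+1}-1}{\ell+1}$ and commute $\psi_i$ inside to reach a braid $\psi_i\psi_{i+1}\psi_i$, for which the residue triple at positions $i, i+1, i+2$ is $(\kappa_2 - \ell,\,\kappa_1 + i - \ell - 1,\,\kappa_1 + i - \ell)$; the first and third coincide modulo $e$ exactly when $i \equiv \kappa_2 - \kappa_1 \pmod{e}$.

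After the initial braid move (with zero correction under the respective hypothesis), I reduce the resulting expression by iterated commutations and, where needed, further applications of \cref{psibraid}; each intermediate correction combines with the defining relation $\psi_r z_\lambda = 0$ for $r \in \{1,\ldots,m-1\} \cup \{m+1,\ldots,n-1\}$ to eventually kill every surviving term. The main obstacle is this final reduction: the chain of moves can be intricate — especially when $i$ is close to $m$ — and careful bookkeeping is needed to confirm that every branch of the computation terminates with a factor of the form $\psi_r z_\lambda$ for $r \neq m$.
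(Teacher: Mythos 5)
The opening moves of your plan match the paper's proof: you correctly factor $\psi_{w_\ttt}$, commute $\psi_i$ inward to the block touching the braid, and compute the residue triple to see that $i\not\equiv2+\kappa_2-\kappa_1\Mod{e}$ (resp.\ $i\not\equiv\kappa_2-\kappa_1\Mod{e}$) kills the correction term in the \emph{first} application of \cref{psibraid}. But the hand-wave in your last paragraph — ``I reduce the resulting expression by iterated commutations \dots eventually kill every surviving term'' — is exactly where the proof lives, and the plan as stated will not close up. After the first braid move (say in part 1, when $i\geqslant r+1$) you are left with a term of the shape $\psi_{i-1}\psi_i\psi_{i-1}\,v(1,\dots,r-1,i-1,i,a_{r+2},\dots,a_m)$. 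You cannot simply keep commuting and braiding this forward: the innermost $\psi_{i-1}$ again hits two consecutive leg entries $i-1,i$, and the condition that excludes the correction term in \emph{that} braid is now $i-1\not\equiv2+\kappa_2-\kappa_1\Mod{e}$, i.e.\ $i\not\equiv3+\kappa_2-\kappa_1\Mod{e}$, which is \emph{not} implied by your hypothesis. Without a separate device, the forward computation spawns new correction terms one shift at a time, with no clear reason the branching terminates, and the special small-$e$ behaviour ($e=3,4$) makes a uniform unrolling even harder.

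The paper's device, which your plan is missing, is a simultaneous induction on $a_1+\cdots+a_m$ proving both parts of the lemma at once. After one braid, the surviving term has the form $\psi_{i-1}v(1,\dots,r-1,i-1,i,a_{r+2},\dots,a_m)$, and the new leg entries sum to strictly less than the old ones; the paper then invokes the inductive hypothesis of part (1) (for the generic residue case) or, in the residual cases, peels off another braid and invokes the inductive hypothesis of \emph{part (2)} — the two parts feed each other. Your sketch never introduces this induction nor the cross-use of the two statements, and without it the ``final reduction'' you defer is not merely intricate but genuinely unbounded. You also implicitly assume that surviving terms always land on a factor $\psi_rz_\lambda$ for $r\neq m$; in the paper's argument many branches are instead killed by the inductive hypothesis, not by the defining relations, and in at least one branch the $e=3$ braid produces a $-1$ correction that must be tracked and shown to cancel via the inductive hypothesis rather than via $\psi_rz_\lambda=0$. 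You need to restructure the argument as an induction on $\sum a_j$, prove (1) and (2) simultaneously, and in the exceptional residue cases explicitly peel off the next braid and reduce to smaller cases of both parts.
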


\begin{proof}
We proceed by induction on the sum $a_1+\dots+a_m$.
\begin{enumerate}
\item{ Let $r$ be such that $a_r=i$ and $a_{r+1}=i+1$. If $i=r$, then all of the terms $\Psid{a_1-1}{1},\dots,\Psid{a_{r+1}-1}{r+1}$ are trivial. Thus, by employing \cref{psicomm} and the first part of \cref{cor:spechtpres}, we have
\begin{align*}
\psi_iv(a_1,\dots,a_m)
&=\psi_i\Psid{a_{r+2}-1}{r+2}\dots\Psid{a_m-1}{m}z_{((n-m),(1^m))}
\\&
=\Psid{a_{r+2}-1}{r+2}\dots\Psid{a_m-1}{m}\psi_iz_{((n-m),(1^m))}
=0.
\end{align*}
Now assume that $i\geqslant{r+1}$, and using \cref{psicomm}, we observe that
\begin{align*}
&\psi_iv(a_1,\dots,a_m)\\
&=\Psid{a_1-1}{1}\dots\Psid{a_{r-1}-1}{r-1}
\psi_i\psi_{i-1}\psi_i
\Psid{i-2}{r}\Psid{i-1}{r+1}\Psid{a_{r+2}-1}{r+2}\dots\Psid{a_m-1}{m}z_{((n-m),(1^m))}\\
&=\Psid{a_1-1}{1}\dots\Psid{a_{r-1}-1}{r-1}
\psi_i\psi_{i-1}\psi_i\psi_{w_{\ttt}}z_{((n-m),(1^m))},
\end{align*}

where $w_{\ttt}=\s{i-2}{r}\s{i-1}{r+1}\s{a_{r+2}-1}{r+2}\dots\s{a_m-1}{m}$ such that $\ttt=w_{\ttt}\ttt_{((n-m),(1^m))}$.
We observe that $\ttt(r,1,2)=i-1$, $\ttt(r+1,1,2)=i$ and $\ttt(1,i-r,1)=i+1$, and moreover, $\res_{\ttt}(r,1,2)\equiv{\kappa_2+1-r}\Mod{e}$, $\res_{\ttt}(r+1,1,2)\equiv\kappa_2-r\Mod{e}$ and $\res_{\ttt}(1,i-r,1)\not\equiv\kappa_2-r+1\Mod{e}$. Recall that $e_{\ttt}$ is the idempotent with respect to $\ttt$. By applying \cref{psiidemp} and \cref{psibraid}, we thus have

\begin{align*}
&\Psid{a_1-1}{1}\dots\Psid{a_{r-1}-1}{r-1}
\psi_i\psi_{i-1}\psi_i
\Psid{i-2}{r}\Psid{i-1}{r+1}\Psid{a_{r+2}-1}{r+2}\dots\Psid{a_m-1}{m}z_{((n-m),(1^m))}\\
&=\Psid{a_1-1}{1}\dots\Psid{a_{r-1}-1}{r-1}
\psi_i\psi_{i-1}\psi_i
\Psid{i-2}{r}\Psid{i-1}{r+1}\Psid{a_{r+2}-1}{r+2}\dots\Psid{a_m-1}{m}e(\mathbf{i}_{((n-m),(1^m))})z_{((n-m),(1^m))}\\
&=\Psid{a_1-1}{1}\dots\Psid{a_{r-1}-1}{r-1}
\psi_i\psi_{i-1}\psi_ie_{\ttt}
\Psid{i-2}{r}\Psid{i-1}{r+1}\Psid{a_{r+2}-1}{r+2}\dots\Psid{a_m-1}{m}z_{((n-m),(1^m))}\\
&=\Psid{a_1-1}{1}\dots\Psid{a_{r-1}-1}{r-1}
\psi_{i-1}\psi_i\psi_{i-1}
\Psid{i-2}{r}\Psid{i-1}{r+1}\Psid{a_{r+2}-1}{r+2}\dots\Psid{a_m-1}{m}z_{((n-m),(1^m))}\\
&=\Psid{a_1-1}{1}\dots\Psid{a_{r-1}-1}{r-1}
\psi_{i-1}\psi_i\psi_{i-1}
v(1,\dots,r-1,i-1,i,a_{r+2},\dots,a_m),
\end{align*}
where $\psi_{i-1}v(1,\dots,r-1,i-1,i,a_{r+2},\dots,a_m)$ equals zero by induction if $i-1\not\equiv{2+\kappa_2-\kappa_1}\Mod{e}$. Now suppose that $i\equiv{\kappa_2-\kappa_1+3}\Mod{e}$. We note that the terms $\Psid{i-2}{r}$ and $\Psid{i-1}{r+1}$ are trivial if $i=r+1$. Thus, by applying \cref{psicomm} and \cref{cor:spechtpres}, we have
\begin{align*}
\psi_{i-1}v(1,\dots,r-1,i-1,i,a_{r+2},\dots,a_m)
&=\psi_{i-1}\Psid{a_{r+2}-1}{r+2}\dots\Psid{a_m-1}{m}z_{((n-m),(1^m))}\\
&=\Psid{a_{r+2}-1}{r+2}\dots\Psid{a_m-1}{m}\psi_{i-1}z_{((n-m),(1^m))}
=0.
\end{align*}
We now suppose that $i\geqslant{r+2}$. Then, by using \cref{psibraid}, $\psi_iv(a_1,\dots,a_m)$ becomes
\begin{align*}
&\Psid{a_1-1}{1}\dots\Psid{a_{r-1}-1}{r-1}
\psi_{i-1}\psi_i\psi_{i-1}\psi_{i-2}\psi_{i-1}
\Psid{i-3}{r}\Psid{i-2}{r+1}\Psid{a_{r+2}-1}{r+2}\dots\Psid{a_m-1}{m}z_{((n-m),(1^m))}\\
&=\Psid{a_1-1}{1}\dots\Psid{a_{r-1}-1}{r-1}
\psi_{i-1}\psi_i
(\psi_{i-2}\psi_{i-1}\psi_{i-2}+1)
\Psid{i-3}{r}\Psid{i-2}{r+1}\Psid{a_{r+2}-1}{r+2}\dots \\
&\qquad\dots\Psid{a_m-1}{m}z_{((n-m),(1^m))}.
\end{align*}
By splitting this sum into its two terms, the first term becomes
\begin{align*}
&\Psid{a_1-1}{1}\dots\Psid{a_{r-1}-1}{r-1}
\psi_{i-1}\psi_i
\psi_{i-2}\psi_{i-1}\psi_{i-2}
v(1,\dots,r-1,i-2,i-1,a_{r+2},\dots,a_m),
\end{align*}
where $\psi_{i-2}v(1,\dots,r-1,i-2,i-1,a_{r+2},\dots,a_m)$ is zero by induction since $i-2\not\equiv{2+\kappa_2-\kappa_1}\Mod{e}$.
The second term is
\[
\Psid{a_1-1}{1}\dots\Psid{a_{r-1}-1}{r-1}
\psi_{i-1}\psi_i\Psid{i-3}{r}\Psid{i-2}{r+1}
\Psid{a_{r+2}-1}{r+2}\dots\Psid{a_m-1}{m}z_{((n-m),(1^m))},
\]
which becomes
\[
\Psid{a_1-1}{1}\dots\Psid{a_{r-1}-1}{r-1}
\Psid{i-3}{r}\Psid{i-1}{r+1}\psi_iz_{((n-m),(1^m))}=0
\]
by \cref{cor:spechtpres} if $r+1=m$. We now assume that $r+1<m$, so that the second term becomes
\begin{align*}
&\Psid{a_1-1}{1}\dots\Psid{a_{r-1}-1}{r-1}
\Psid{i-3}{r}\Psid{i-1}{r+1}\Psid{a_{r+2}-1}{i+2}
\psi_i\psi_{i+1}\psi_i
\Psid{i-1}{r+2}\Psid{a_{r+3}-1}{r+3}\dots
\dots\Psid{a_m-1}{m}z_{((n-m),(1^m))},
\end{align*}
by using \cref{psicomm}.
If $e\neq{3}$, then by using \cref{psibraid} this term becomes
\begin{align*}
&\Psid{a_1-1}{1}\dots\Psid{a_{r-1}-1}{r-1}
\Psid{i-3}{r}\Psid{i-1}{r+1}\Psid{a_{r+2}-1}{i+2}
\psi_{i+1}\psi_i\psi_{i+1}
\Psid{i-1}{r+2}\Psid{a_{r+3}-1}{r+3}\dots
\Psid{a_m-1}{m}z_{((n-m),(1^m))},
\end{align*}
whereas if $e=3$, then by using \cref{psibraid} the term becomes
\begin{align*}
&\Psid{a_1-1}{1}\dots\Psid{a_{r-1}-1}{r-1}
\Psid{i-3}{r}\Psid{i-1}{r+1}\Psid{a_{r+2}-1}{i+2}
(\psi_{i+1}\psi_i\psi_{i+1}-1)
\Psid{i-1}{r+2}\Psid{a_{r+3}-1}{r+3}\dots
\Psid{a_m-1}{m}z_{((n-m),(1^m))}.
\end{align*}

We first assume that $e$ is arbitrary and see that the first term in its sum (and only term if $e\neq{3}$) is
\begin{align*}
&\Psid{a_1-1}{1}\dots\Psid{a_{r-1}-1}{r-1}
\Psid{i-3}{r}\Psid{i-1}{r+1}\Psid{a_{r+2}-1}{i+2}
\psi_{i+1}\psi_i\psi_{i+1}
\Psid{i-1}{r+2}\Psid{a_{r+3}-1}{r+3}\dots\Psid{a_m-1}{m}z_{((n-m),(1^m))},
\end{align*}
and moreover, if $m=r+2$, this equals
\begin{align*}
&\Psid{a_1-1}{1}\dots\Psid{a_{m-3}-1}{m-3}\Psid{i-3}{m-2}\Psid{i-2}{m-1}\Psid{a_{m}-1}{i+2}
\psi_{i+1}\psi_i\psi_{i+1}\Psid{i-1}{m}z_{((n-m),(1^m))}\\
&=\Psid{a_1-1}{1}\dots\Psid{a_{m-3}-1}{m-3}\Psid{i-3}{m-2}\Psid{i-2}{m-1}\Psid{a_{m}-1}{i+2}
\psi_{i+1}\psi_i\Psid{i-1}{m}\psi_{i+1}z_{((n-m),(1^m))}=0,
\end{align*}
by \cref{psicomm} and \cref{cor:spechtpres}.
Now supposing that $m>r+2$, this term is
\begin{align*}
&\Psid{a_1-1}{1}\dots\Psid{a_{r-1}-1}{r-1}
\Psid{i-3}{r}\Psid{i-1}{r+1}\Psid{a_{r+2}-1}{i+2}
\psi_{i+1}\psi_i\psi_{i+1}
v(1,\dots,r+1,i,a_{r+3},\dots,a_m),
\end{align*}
where $\psi_{i+1}v(1,\dots,r+1,i,a_{r+3},\dots,a_m)$ equals zero by the inductive hypothesis of part $(2)$ of the lemma if $e\neq{4}$, in which case $i+1\not\equiv{\kappa_2-\kappa_1}\Mod{e}$. We now assume that $e=4$. Then, by using \cref{psicomm} and \cref{psibraid}, this term becomes
\begin{align*}
&\Psid{a_1-1}{1}\dots\Psid{a_{r-1}-1}{r-1}
\Psid{i-3}{r}\Psid{i-1}{r+1}\Psid{a_{r+2}-1}{r+2}
\Psid{a_{r+3}-1}{i+3}
\psi_{i+1}\psi_{i+2}\psi_{i+1}
\Psid{i}{r+3}
\Psid{a_{r+4}-1}{r+4}\dots\\
&\qquad\dots\Psid{a_m-1}{m}z_{((n-m),(1^m))}\\
&=\Psid{a_1-1}{1}\dots\Psid{a_{r-1}-1}{r-1}
\Psid{i-3}{r}\Psid{i-1}{r+1}\Psid{a_{r+2}-1}{r+2}
\Psid{a_{r+3}-1}{i+3}
(\psi_{i+2}\psi_{i+1}\psi_{i+2}-1)
\Psid{i}{r+3}
\Psid{a_{r+4}-1}{r+4}\dots\\
&\qquad\dots\Psid{a_m-1}{m}z_{((n-m),(1^m))}.
\end{align*}
If $m=r+3$, then by applying \cref{psicomm} and \cref{cor:spechtpres}, the first term of this expression equals
\begin{align*}
&\Psid{a_1-1}{1}\dots\Psid{a_{m-4}-1}{m-4}\Psid{i-3}{m-3}\Psid{i-1}{m-2}
\Psid{a_{m-1}-1}{m-1}\Psid{a_m-1}{i+3}\psi_{i+2}\psi_{i+1}\psi_{i+2}\Psid{i}{m}z_{((n-m),(1^m))}\\
&=\Psid{a_1-1}{1}\dots\Psid{a_{m-4}-1}{m-4}\Psid{i-3}{m-3}\Psid{i-1}{m-2}
\Psid{a_{m-1}-1}{m-1}\Psid{a_m-1}{i+3}\psi_{i+2}\psi_{i+1}\Psid{i}{m}\psi_{i+2}z_{((n-m),(1^m))}=0,
\end{align*}
and the second term becomes
\begin{align*}
&-\Psid{a_1-1}{1}\dots\Psid{a_{m-4}-1}{m-4}\Psid{i-3}{m-3}\Psid{i-1}{m-2}\Psid{a_{m-1}-1}{m-1}\Psid{a_{m}-1}{i+3}\Psid{i}{m}z_{((n-m),(1^m))}\\
&=-\Psid{a_1-1}{1}\dots\Psid{a_{m-4}-1}{m-4}\Psid{i-3}{m-3}\Psid{i-1}{m-2}\Psid{a_{m-1}-1}{m-1}\Psid{a_{m}-1}{i+4}\Psid{i}{m}\psi_{i+3}z_{((n-m),(1^m))}=0.
\end{align*}
Now supposing that $m>r+3$, then the first term is
\begin{align*}
&\Psid{a_1-1}{1}\dots\Psid{a_{r-1}-1}{r-1}
\Psid{i-3}{r}\Psid{i-1}{r+1}\Psid{a_{r+2}-1}{r+2}
\Psid{a_{r+3}-1}{i+3}
\psi_{i+2}\psi_{i+1}\\
&\qquad\cdot\psi_{i+2}
v(1,\dots,r+2,i+1,a_{r+4},\dots,a_m),
\end{align*}
where $\psi_{i+2}v(1,\dots,r+2,i+1,a_{r+4},\dots,a_m)$ equals zero by the inductive hypothesis of part $(2)$ of the lemma, since $i+2\not\equiv{\kappa_2-\kappa_1}\Mod{4}$.
We now look at the second term and first suppose that $a_{r+3}=i+3$. Then we have
\begin{align*}
&-\Psid{a_1-1}{1}\dots\Psid{a_{r-1}-1}{r-1}\Psid{i-3}{r}\Psid{i-1}{r+1}
\psi_{i+1}
(\psi_i\psi_{i-1}\psi_i)
\Psid{i-2}{r+2}\Psid{i-1}{r+3}
\Psid{a_{r+4}-1}{r+4}\dots
\Psid{a_m-1}{m}z_{((n-m),(1^m))}\\
&=-\Psid{a_1-1}{1}\dots\Psid{a_{r-1}-1}{r-1}\Psid{i-3}{r}\Psid{i-1}{r+1}\psi_{i+1}
(\psi_{i-1}\psi_i\psi_{i-1})
\Psid{i-2}{r+2}\Psid{i-1}{r+3}
\Psid{a_{r+4}-1}{r+4}\dots
\Psid{a_m-1}{m}z_{((n-m),(1^m))}\\
&=-\Psid{a_1-1}{1}\dots\Psid{a_{r-1}-1}{r-1}\Psid{i-3}{r}\Psid{i-1}{r+1}
\psi_{i+1}\psi_{i-1}\psi_i
(\psi_{i-1}\psi_{i-2}\psi_{i-1})
\Psid{i-3}{r+2}\Psid{i-2}{r+3}
\Psid{a_{r+4}-1}{r+4}\dots\\
&\qquad\dots\Psid{a_m-1}{m}z_{((n-m),(1^m))}\\
&=-\Psid{a_1-1}{1}\dots\Psid{a_{r-1}-1}{r-1}\Psid{i-3}{r}\Psid{i-1}{r+1}
\psi_{i+1}\psi_{i-1}\psi_i
(\psi_{i-2}\psi_{i-1}\psi_{i-2}+1)
\Psid{i-3}{r+2}\Psid{i-2}{r+3}
\Psid{a_{r+4}-1}{r+4}\dots\\
&\qquad\dots\Psid{a_m-1}{m}z_{((n-m),(1^m))},
\end{align*}
by \cref{psicomm} and \cref{psibraid}. By splitting this sum into its two terms, the first term becomes
\begin{align*}
&-\Psid{a_1-1}{1}\dots\Psid{a_{r-1}-1}{r-1}\Psid{i-3}{r}\Psid{i-1}{r+1}
\psi_{i+1}\psi_{i-1}\psi_i
\psi_{i-2}\psi_{i-1}\\
&\qquad\cdot\psi_{i-2}
v(1,\dots,r+1,i-2,i-1,a_{r+4},\dots,a_m),
\end{align*}
where $\psi_{i-2}v(1,\dots,r+1,i-2,i-1,a_{r+4},\dots,a_m)$ equals zero by the inductive hypothesis of part $(1)$ of the lemma. The second term becomes
\begin{align*}
&-\Psid{a_1-1}{1}\dots\Psid{a_{r-1}-1}{r-1}\Psid{i-3}{r}\Psid{i-1}{r+1}
\psi_{i+1}\psi_{i-1}
\Psid{i-3}{r+2}\psi_i
v(1,\dots,r+2,i-1,a_{r+4},\dots,a_m),
\end{align*}
where $\psi_iv(1,\dots,r+2,i-1,a_{r+4},\dots,a_m)$ equals zero by the inductive hypothesis of part $(2)$ of the lemma.
Instead suppose that $a_{r+3}>{i+3}$. Then, by \cref{psicomm} and \cref{psibraid}, the second term becomes
\begin{align*}
&-\Psid{a_1-1}{1}\dots\Psid{a_{r-1}-1}{r-1}\Psid{i-3}{r}\Psid{i-1}{r+1}\Psid{a_{r+2}-1}{r+2}\Psid{a_{r+3}-1}{i+4}\Psid{i}{r+3}\Psid{a_{r+4}-1}{i+5}
\psi_{i+3}\psi_{i+4}\psi_{i+3}\\
&\qquad\cdot
\Psid{i+2}{r+4}\Psid{a_{r+5}-1}{r+5}\dots\Psid{a_m-1}{m}z_{((n-m),(1^m))}\\
&=-\Psid{a_1-1}{1}\dots\Psid{a_{r-1}-1}{r-1}\Psid{i-3}{r}\Psid{i-1}{r+1}\Psid{a_{r+2}-1}{r+2}\Psid{a_{r+3}-1}{i+4}\Psid{i}{r+3}\Psid{a_{r+4}-1}{i+5}
\psi_{i+4}\psi_{i+3}\psi_{i+4}\\
&\qquad\cdot
\Psid{i+2}{r+4}\Psid{a_{r+5}-1}{r+5}\dots\Psid{a_m-1}{m}z_{((n-m),(1^m))}\\
&=-\Psid{a_1-1}{1}\dots\Psid{a_{r-1}-1}{r-1}\Psid{i-3}{r}\Psid{i-1}{r+1}\Psid{a_{r+2}-1}{r+2}\Psid{a_{r+3}-1}{i+4}\Psid{i}{r+3}\Psid{a_{r+4}-1}{i+5}
\psi_{i+4}\psi_{i+3}\\&\qquad\cdot\psi_{i+4}
v(1,\dots,r+3,i+3,a_{r+5},\dots,a_m),
\end{align*}
where $\psi_{i+4}v(1,\dots,r+3,i+3,a_{r+5},\dots,a_m)$ equals zero by the inductive hypothesis of part $(2)$ of the lemma.

We now assume that $e=3$. Then, by using \cref{psicomm}, the second term from above becomes
\begin{align*}
&-\Psid{a_1-1}{1}\dots\Psid{a_{r-1}-1}{r-1}
\Psid{i-3}{r}\Psid{a_{r+2}-1}{i+2}\psi_{i-1}\Psid{i-2}{r+1}
\Psid{i-1}{r+2}\Psid{a_{r+3}-1}{r+3}\dots\Psid{a_m-1}{m}z_{((n-m),(1^m))}.
\end{align*}
If $i=r+2$, then the terms $\Psid{i-3}{r}$, $\Psid{i-2}{r+1}$ and $\Psid{i-1}{r+2}$ are trivial, so we thus have
\begin{align*}
&-\Psid{a_1-1}{1}\dots\Psid{a_{r-1}-1}{r-1}
\Psid{a_{r+2}-1}{i+2}\psi_{i-1}\Psid{a_{r+3}-1}{r+3}\dots\Psid{a_m-1}{m}z_{((n-m),(1^m))}\\
&=-\Psid{a_1-1}{1}\dots\Psid{a_{r-1}-1}{r-1}
\Psid{a_{r+2}-1}{i+2}\Psid{a_{r+3}-1}{r+3}\dots\Psid{a_m-1}{m}\psi_{i-1}z_{((n-m),(1^m))}
=0,
\end{align*}
by \cref{psicomm} and \cref{cor:spechtpres}.
We now assume that $i\geqslant{r+3}$ and rewrite this expression to be
\begin{align*}
&-\Psid{a_1-1}{1}\dots\Psid{a_{r-1}-1}{r-1}\Psid{a_{r+2}-1}{i+2}
\Psid{i-3}{r}\psi_{i-1}\psi_{i-2}\psi_{i-1}
\Psid{i-3}{r+1}
\Psid{i-2}{r+2}\Psid{a_{r+3}-1}{r+3}\dots
\Psid{a_m-1}{m}z_{((n-m),(1^m))}\\
&=-\Psid{a_1-1}{1}\dots\Psid{a_{r-1}-1}{r-1}\Psid{a_{r+2}-1}{i+2}
\Psid{i-3}{r}
(\psi_{i-2}\psi_{i-1}\psi_{i-2}-1)
\Psid{i-3}{r+1}
\Psid{i-2}{r+2}\Psid{a_{r+3}-1}{r+3}\dots\\
&\qquad\dots\Psid{a_m-1}{m}z_{((n-m),(1^m))},
\end{align*}
by using \cref{psicomm} and \cref{psibraid}; we again consider its two summands. The first term is
\begin{align*}
&-\Psid{a_1-1}{1}\dots\Psid{a_{r-1}-1}{r-1}\Psid{a_{r+2}-1}{i+2}
\Psid{i-3}{r}
\psi_{i-2}\psi_{i-1}\psi_{i-2}
v(1,\dots,r,i-2,i-1,a_{r+3},\dots,a_m),
\end{align*}
where $\psi_{i-2}v(1,\dots,r,i-2,i-1,a_{r+3},\dots,a_m)$ equals zero by induction since $i-2\not\equiv{2+\kappa_2-\kappa_1}\Mod{3}$.
The second term becomes
\begin{align*}
&-\Psid{a_1-1}{1}\dots\Psid{a_{r-1}-1}{r-1}\Psid{a_{r+2}-1}{i+2}\psi_{i-3}
\Psid{i-4}{r}
\Psid{i-3}{r+1}
\Psid{i-2}{r+2}\Psid{a_{r+3}-1}{r+3}\dots\Psid{a_m-1}{m}z_{((n-m),(1^m))}.
\end{align*}
If $i=r+3$, then $\Psid{i-4}{r}\Psid{i-3}{r+1}\Psid{i-2}{r+2}$ is trivial, so the expression becomes
\begin{align*}
&-\Psid{a_1-1}{1}\dots\Psid{a_{r-1}-1}{r-1}\Psid{a_{r+2}-1}{i+2}
\Psid{a_{r+3}-1}{r+3}\dots\Psid{a_m-1}{m}\psi_{i-3}z_{((n-m),(1^m))}=0,
\end{align*}
by \cref{psicomm} and \cref{cor:spechtpres}.
Now supposing that $i\geqslant{r+4}$, the expression becomes
\begin{align*}
&-\Psid{a_1-1}{1}\dots\Psid{a_{r-1}-1}{r-1}\Psid{a_{r+2}-1}{i+2}\psi_{i-3}
v(1,\dots,r-1,i-3,i-2,i-1,a_{r+3},\dots,a_m),
\end{align*}
where $\psi_{i-3}v(1,\dots,r-1,i-3,i-2,i-1,a_{r+3},\dots,a_m)$ equals zero by induction since $i-3\not\equiv{2+\kappa_2-\kappa_1}\Mod{e}$.
Thus, we have proved the first statement, as required, by assuming the inductive hypothesis for both parts of the lemma.}
\item{ We prove the second statement similarly to the first, by using the inductive hypotheses of both statements, together with the relations in the Khovanov--Lauda--Rouquier algebra and the Specht module presentations given in \cref{defKLR} and in the first part of \cref{cor:spechtpres}.}
\qedhere
\end{enumerate}
\end{proof}

\begin{rmk}
We note that these are not the only cases when $\psi_i$ kills the basis vector $v(a_1,\dots,a_m)$, for $1\leqslant{i}<n$. For example, let $e=3$, $\kappa=(0,0)$, $i=3$, and $S_{((3),(1^3))}$. Then $\psi_3v(1,2,4)=0$, where $3\not\in\{a_1,a_2,a_3\}$. In \cref{subsec:furtheraction}, we will expand on the previous lemma and give an explicit description of the complete action of the generators $\psi_1,\dots,\psi_{n-1}\in\mathscr{H}_n^{\Lambda}$ on the basis elements $v_{\ttt}\in S_{((n-m),(1^m))}$.
\end{rmk}

\begin{lem}\label{lem:psiup}
	If $i\not\equiv{1+\kappa_2-\kappa_1}\Mod{e}$ with $i\leqslant{m}$, then $y_i\Psiu{i}{m}z_{((n-m),(1^m))}=0$.
\end{lem}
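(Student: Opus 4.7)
My plan is to push $y_i$ rightwards through the string $\psi_i\psi_{i+1}\cdots\psi_m$ using the rearrangement
\[y_r\psi_r e(\mathbf{j})=\psi_r y_{r+1} e(\mathbf{j})-\delta_{j_r,j_{r+1}} e(\mathbf{j})\]
of \cref{psiyup}, until $y$ reaches the Specht generator $z:=z_{((n-m),(1^m))}$, where $y_{m+1}z=0$ by \cref{cor:spechtpres}. Before carrying out the first push I would identify the idempotent supporting $\Psiu{i+1}{m}z$. By \cref{lem:idemp} this is $e_{\tts}$ for the standard tableau $\tts=s_{i+1}\cdots s_m\ttt_{((n-m),(1^m))}$, whose leg reads $(1,\dots,i,i+2,\dots,m+1)^{T}$ and arm reads $(i+1,m+2,\dots,n)$; the residues of $\mathbf{i}_{\tts}$ at positions $i$ and $i+1$ are therefore $\kappa_2+1-i$ and $\kappa_1$, which differ exactly under the hypothesis $i\not\equiv1+\kappa_2-\kappa_1\Mod{e}$. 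The $\delta$-term then drops out and the first push yields
\[y_i\Psiu{i}{m}z=\psi_i y_{i+1}\Psiu{i+1}{m}z.\]

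I would then proceed by induction on $m-i\geqslant0$. The base case $i=m$ is immediate because $\Psiu{m+1}{m}$ is the empty product, so $y_{m+1}z=0$. For the inductive step I would split on whether $i+1\equiv1+\kappa_2-\kappa_1\Mod{e}$. When the congruence fails, the inductive hypothesis applies directly to $y_{i+1}\Psiu{i+1}{m}z$ and finishes the proof at once.

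The hard part will be the remaining subcase $i+1\equiv1+\kappa_2-\kappa_1\Mod{e}$, equivalently $i\equiv\kappa_2-\kappa_1\Mod{e}$, where the inductive hypothesis cannot be invoked at the index $i+1$. My plan here is to perform a second push: the idempotent supporting $\Psiu{i+2}{m}z$ is $e_{\tts'}$ for $\tts'=s_{i+2}\cdots s_m\ttt_{((n-m),(1^m))}$, whose relevant residues at positions $i+1$ and $i+2$ are $\kappa_2-i$ and $\kappa_1$, and these \emph{agree} under this subcase. Hence
\[y_i\Psiu{i}{m}z=\psi_i\psi_{i+1}y_{i+2}\Psiu{i+2}{m}z-\psi_i\Psiu{i+2}{m}z.\]
The correction term $\psi_i\Psiu{i+2}{m}z$ vanishes because $\psi_i$ commutes with each of $\psi_{i+2},\dots,\psi_m$ by \cref{psicomm} and $\psi_i z=0$ by \cref{cor:spechtpres}, using that $i\leqslant m-1$ in this subcase. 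The remaining term vanishes by the inductive hypothesis applied to $y_{i+2}\Psiu{i+2}{m}z$: the congruence $i\equiv\kappa_2-\kappa_1\Mod{e}$ combined with $e\geqslant3$ forces $i+2\not\equiv1+\kappa_2-\kappa_1\Mod{e}$, and the edge case $i+2=m+1$ degenerates once more to $y_{m+1}z=0$. The key bookkeeping is therefore to verify the residue computations for $\tts$ and $\tts'$; once these are in place the delta contributions are controlled, and the only subtle step is the second-push argument in the exceptional subcase.
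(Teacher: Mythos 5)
Your proof is correct and follows essentially the same route as the paper's: downwards induction on $i$, a first application of \cref{psiyup} with the $\delta$-term vanishing under the hypothesis, a direct appeal to the inductive hypothesis when $i\not\equiv\kappa_2-\kappa_1\Mod{e}$, and a second push (with the $\delta$-term now contributing $-\Psiu{i+2}{m}z$) in the remaining case $i\equiv\kappa_2-\kappa_1\Mod{e}$, after which the correction term dies via \cref{psicomm} and $\psi_iz_{((n-m),(1^m))}=0$. The only difference is that you make explicit the residue computations for the relevant idempotents $e_{\tts}$ and $e_{\tts'}$, which the paper leaves implicit when it cites \cref{psiidemp}.
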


\begin{proof}
	We proceed by downwards induction on $i$. If $i=m$, then $y_m\psi_mz_{((n-m),(1^m))}=\psi_my_{m+1}z_{((n-m),(1^m))}=0$ by \cref{psiyup} and the first part of \cref{cor:spechtpres}.
	We now suppose that $i<m$. Then
	\[y_i\Psiu{i}{m}z_{((n-m),(1^m))}
	=\psi_iy_{i+1}\Psiu{i+1}{m}z_{((n-m),(1^m))}\]
	by \cref{psiidemp} and \cref{psiyup}, where $y_{i+1}\Psiu{i+1}{m}z_{((n-m),(1^m))}$ equals zero by induction if $i\not\equiv{\kappa_2-\kappa_1}\Mod{e}$.
	Assuming that $i\equiv{\kappa_2-\kappa_1}\Mod{e}$, this term becomes
	\begin{align*}
	&\psi_i(\psi_{i+1}y_{i+2}-1)\Psiu{i+2}{m}z_{((n-m),(1^m))}
	=\psi_i\psi_{i+1}y_{i+2}\Psiu{i+2}{m}z_{((n-m),(1^m))}
	-\psi_i\Psiu{i+2}{m}z_{((n-m),(1^m))},
	\end{align*}
	by \cref{psiyup}. The first term equals zero since $y_{i+2}\Psiu{i+2}{m}z_{((n-m),(1^m))}$ equals zero by induction, whilst the second term becomes $-\Psiu{i+2}{m}\psi_iz_{((n-m),(1^m))}=0$ by \cref{psicomm} and \cref{cor:spechtpres}.
\end{proof}

We now show when the generators $y_1,\dots,y_n\in\mathscr{H}_n^{\Lambda}$ act trivially on basis elements $v_{\ttt}\in S_{((n-m),(1^m))}$.

\begin{lem}\label{ygeneratorlem}
	\begin{enumerate}
	\item{ Let $i\equiv{1+\kappa_2-\kappa_1}\Mod{e}$. Then $y_iv(a_1,\dots,a_m)=0$ if and only if either $i\in\{a_1,\dots,a_m\}$ or $i+1\not\in\{a_1,\dots,a_m\}$.}
	\item{ Let $i\equiv{2+\kappa_2-\kappa_1}\Mod{e}$. Then $y_iv(a_1,\dots,a_m)=0$ if and only if either $i-1\in\{a_1,\dots,a_m\}$ or $i\not\in\{a_1,\dots,a_m\}$.}
	\item{ Let $i-\kappa_2+\kappa_1\not\equiv{1,2}\Mod{e}$. Then $y_iv(a_1,\dots,a_m)=0$.}
	\end{enumerate}
\end{lem}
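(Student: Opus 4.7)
\textbf{The plan is} to induct on $\sum_{r=1}^m(a_r-r)\geq 0$, strengthening the lemma to record the explicit non-zero value of $y_iv$: in case (1), $y_iv(a_1,\ldots,a_m) = -v(a_1,\ldots,a_{s'-1},i,a_{s'+1},\ldots,a_m)$ where the unique $a_{s'}=i+1$ is replaced by $i$; in case (2), $y_iv = +v(a_1,\ldots,a_{s''-1},i-1,a_{s''+1},\ldots,a_m)$ where the unique $a_{s''}=i$ is replaced by $i-1$; and in case (3), $y_iv=0$ always. The ``iff'' statements follow immediately from this strengthened form. The base case $(a_1,\ldots,a_m)=(1,2,\ldots,m)$ gives $v=z_\lambda$, killed by every $y_i$ via \cref{cor:spechtpres}, and all RHS conditions are verified by inspection.

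For the inductive step, I let $s$ be the smallest index with $a_s>s$. Since $\Psid{a_r-1}{r}$ is trivial for $r<s$, I factorise $v = \psi_{a_s-1}v'$, where $v' := v(a_1,\ldots,a_{s-1},a_s-1,a_{s+1},\ldots,a_m)$ is a basis vector with strictly smaller index sum. Pushing $y_i$ past $\psi_{a_s-1}$ using \cref{ypsicomm}, \cref{psiyup}, or \cref{ypsidown} according to whether $a_s-1\notin\{i-1,i\}$, $=i$, or $=i-1$, I obtain
\[
y_iv \;=\; \psi_{a_s-1}(y_{i'}v') + \varepsilon\,\delta_{(\mathbf{i}_{v'})_{a_s-1},(\mathbf{i}_{v'})_{a_s}}\,v',
\]
with $(i',\varepsilon) = (i,0)$, $(i+1,-1)$, or $(i-1,+1)$ respectively. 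I then apply the inductive hypothesis to $y_{i'}v'$ and compute the $\delta$-coefficient directly: the entry $a_s-1$ sits at node $(s,1,2)$ of $\ttt'$ with residue $\kappa_2+1-s$, while the entry $a_s$ lies in the arm of $\ttt'$ at a column determined by $\{a'_r\}$.

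The critical non-vanishing sub-cases for (1) and (2) are $a_s=i+1$ and $a_s=i$: induction forces $y_{i\pm 1}v'=0$, and the $\delta$-coefficient equals $1$ exactly because $i\equiv 1+\kappa_2-\kappa_1$ (respectively $2+\kappa_2-\kappa_1$) modulo $e$, producing the claimed $\mp v'$. In the commuting sub-case $a_s-1\notin\{i-1,i\}$, if the lemma forces $y_iv=0$ but induction returns a non-zero $y_iv'$ (this happens for instance when $a_s=i+2$ in case (1), so that $i+1\in\{a'_r\}$ while $i+1\notin\{a_r\}$), then the prefactor $\psi_{i+1}=\psi_{a_s-1}$ annihilates the basis vector returned by induction via \cref{psigeneratorlem}(2), since both $i+1$ and $i+2$ lie in the arm and $i+1\equiv 2+\kappa_2-\kappa_1\not\equiv\kappa_2-\kappa_1\pmod{e}$ (using $e\geq 3$). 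When the lemma predicts $y_iv\neq 0$, I identify $\psi_{a_s-1}v''$ with the target basis vector via the reduced factorisation $\Psid{a_s-1}{s}=\psi_{a_s-1}\Psid{a_s-2}{s}$. Case (3) is analogous, with the additional sub-cases $i\equiv\kappa_2-\kappa_1$ or $\equiv 3+\kappa_2-\kappa_1$ (where the $y$-index after commutation falls into case (1) or (2)) handled by invoking \cref{psigeneratorlem}(1) to annihilate a basis vector having $i,i+1$ (or $i-1,i$) in the same column of the leg.

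\textbf{The main obstacle} will be the careful residue bookkeeping across all of these sub-cases: verifying that the $\delta$-coefficients align exactly with the combinatorial conditions on $\{a_1,\ldots,a_m\}$, and that the non-$\delta$ contributions vanish in the correct sub-cases, often by invoking the companion \cref{psigeneratorlem}.
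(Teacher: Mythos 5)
Your approach is a genuine variant of the paper's. You induct on the same quantity, but your peeling strategy is different: you uniformly strip the leftmost generator $\psi_{a_s-1}$ at each step (with $s$ the smallest index satisfying $a_s>s$), commute $y_i$ through, and recurse on the resulting $v'$ of strictly smaller weight. The paper instead runs a simultaneous induction over six auxiliary vanishing statements ($A,A',B,B',C,C'$), in each case peeling the specific $\psi_{i-1}$ or $\psi_i$ adjacent to $y_i$ wherever it sits inside $\psi_{w_\ttt}$, and it dispatches the non-vanishing direction by a separate direct computation. Strengthening the lemma to carry the explicit non-zero value of $y_i v$, with its sign, is a nice device that folds both halves of the ``if and only if'' into one induction, and your handling of the critical sub-cases $a_s\in\{i,i+1\}$---matching the $\delta$-coefficient against the residue of the node where $a_s$ lands in $\ttt'$, then annihilating the residual $\psi_{a_s-1}y_{i'}v'$ term via the inductive hypothesis and \cref{psigeneratorlem}---is correct where you carry it out. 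The cost is the heavily branching case analysis after commutation, which you identify as the main obstacle; having checked several branches, I believe it closes.

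One caution. The claim that ``all RHS conditions are verified by inspection'' at the base case $v=z_\lambda$ does not survive a boundary case. Take $\kappa_2\equiv\kappa_1-1\Mod{e}$, so that $i=1$ satisfies $i\equiv 2+\kappa_2-\kappa_1$ and falls under part (2). For $v=z_\lambda=v(1,\dots,m)$ with $m\geqslant 1$, we have $i-1=0\notin\{1,\dots,m\}$ and $i=1\in\{1,\dots,m\}$, so part (2) predicts $y_1z_\lambda\neq 0$; but $y_1z_\lambda=0$ by \cref{cor:spechtpres}. Your strengthened formula would read $y_1 v=+v(0,a_2,\dots,a_m)$, which is not a well-defined basis vector. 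This is in fact a flaw in the lemma as stated, and the paper's own proof of the ``only if'' direction of (2) skips it too (its computation factors $\Psid{a_1-1}{1}=\psi_{i-1}\Psid{i-2}{1}$, which silently requires $i\geqslant 2$), but your ``by inspection'' step should have caught it, and it is precisely the point where your induction and your strengthened statement part company.
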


\begin{proof}
We first proceed by simultaneous induction on the sum $a_1+\dots+a_m$ to show that $y_iv(a_1,\dots,a_m)$ equals zero in the following six cases:
\begin{itemize}
\item{ $i\equiv{1+\kappa_2-\kappa_1}\Mod{e}$ and $i\in\{a_1,\dots,a_m\}$; }
\item{ $i\equiv{1+\kappa_2-\kappa_1}\Mod{e}$ and $i+1\not\in\{a_1,\dots,a_m\}$; }
\item{ $i\equiv{2+\kappa_2-\kappa_1}\Mod{e}$ and $i-1\in\{a_1,\dots,a_m\}$; }
\item{ $i\equiv{2+\kappa_2-\kappa_1}\Mod{e}$ and $i\not\in\{a_1,\dots,a_m\}$; }
\item{ $i-\kappa_2+\kappa_1\not\equiv{1,2}\Mod{e}$ and $i\in\{a_1,\dots,a_m\}$; }
\item{ $i-\kappa_2+\kappa_1\not\equiv{1,2}\Mod{e}$ and $i\not\in\{a_1,\dots,a_m\}$. }
\end{itemize}
We label these cases $A$, $A'$, $B$, $B'$, $C$ and $C'$, respectively, from top to bottom. We only provide full details of the statements $A$ and $A'$ since the other statements are similarly proved using \cref{rel} and the first part of \cref{cor:spechtpres}.
\begin{enumerate}
	\item{
	\begin{enumerate}
		\item{
		Suppose that $i\in\{a_1,\dots,a_m\}$ and let $a_r=i$.
		If $i=r$ then $\Psid{a_1-1}{1}\dots\Psid{a_r-1}{r}$ is trivial, so that
			\begin{align*}
			y_iv(a_1,\dots,a_m)
			&=
			y_i
			\Psid{a_{r+1}-1}{r+1}
			\dots
			\Psid{a_m-1}{m}
			z_{((n-m),(1^m))}\\
			&=
			\Psid{a_{r+1}-1}{r+1}
			\dots
			\Psid{a_m-1}{m}
			y_i
			z_{((n-m),(1^m))}
			=0,
			\end{align*}
		by \cref{ypsicomm} and \cref{cor:spechtpres}.	
		We now suppose that $i\geqslant{r+1}$. Then, by using \cref{ypsicomm} and \cref{ypsidown}, we have
			\begin{align*}
			y_iv(a_1,\dots,a_m)
			&=
			\Psid{a_1-1}{1}
			\dots
			\Psid{a_{r-1}-1}{r-1}
			y_i\psi_{i-1}
			\Psid{i-2}{r}
			\Psid{a_{r+1}-1}{r+1}
			\dots
			\Psid{a_m-1}{m}
			z_{((n-m),(1^m))}
			\\&=
			\Psid{a_1-1}{1}
			\dots
			\Psid{a_{r-1}-1}{r-1}
			\psi_{i-1}y_{i-1}
			\Psid{i-2}{r}
			\Psid{a_{r+1}-1}{r+1}
			\dots
			\Psid{a_m-1}{m}
			z_{((n-m),(1^m))}
			\\&=
			\Psid{a_1-1}{1}
			\dots
			\Psid{a_{r-1}-1}{r-1}
			\psi_{i-1}y_{i-1}
			v(1,\dots,r-1,i-1,a_{r+1},\dots,a_m),
			\end{align*}	
		where $y_{i-1}v(1,\dots,r-1,i-1,a_{r+1},\dots,a_m)$ equals zero by the inductive hypothesis of $C$.				
		}
		\item{
		Suppose that $i+1\not\in\{a_1,\dots,a_m\}$, so let $a_r\leqslant{i}$ and $a_{r+1}\geqslant{i+2}$.
		If $i=r$ then $y_iv(a_1,\dots,a_m)$ is trivial by part $(a)$.		
		So let $i\geqslant{r+1}$.
		\begin{enumerate}
			\item{
			Suppose that $a_r=i$. Then $y_iv(a_1,\dots,a_m)=0$ by part $(a)$.			
			}
			\item{
			Suppose that $a_r\leqslant{i-1}$.
			Then, by applying \cref{ypsicomm} and \cref{psiyup}, we have
			\begin{align*}
			&
			y_iv(a_1,\dots,a_m)
			\\&=
			\Psid{a_1-1}{1}
			\dots
			\Psid{a_r-1}{r}
			\Psid{a_{r+1}-1}{i+1}
			y_i\psi_i
			\Psid{i-1}{r+1}
			\Psid{a_{r+2}-1}{r+2}
			\dots
			\Psid{a_m-1}{m}z_{((n-m),(1^m))}
			\\&=
			\Psid{a_1-1}{1}
			\dots
			\Psid{a_r-1}{r}
			\Psid{a_{r+1}-1}{i+1}
			(\psi_iy_{i+1}-1)
			\Psid{i-1}{r+1}
			\Psid{a_{r+2}-1}{r+2}
			\dots
			\Psid{a_m-1}{m}z_{((n-m),(1^m))}\\
			&=
			\Psid{a_1-1}{1}
			\dots
			\Psid{a_r-1}{r}
			\Psid{a_{r+1}-1}{i+1}
			\psi_iy_{i+1}
			v(1,\dots,r,i,a_{r+2},\dots,a_m)\\
			&\ -
			\Psid{a_1-1}{1}
			\dots
			\Psid{a_r-1}{r}
			\Psid{a_{r+1}-1}{i+2}
			\psi_{i+1}
			v(1,\dots,r,i,a_{r+2},\dots,a_m),
			\end{align*}	
			
		where $y_{i+1}v(1,\dots,r,i,a_{r+2},\dots,a_m)$ equals zero by the inductive hypothesis of $B$ and $\psi_{i+1}v(1,\dots,r,i,a_{r+2},\dots,a_m)$ equals zero by part two of \cref{psigeneratorlem} since $a_{r+2}\geqslant{i+3}$ and $i+1\not\equiv{\kappa_2-\kappa_1}\Mod{e}$.	}
		\end{enumerate}}\end{enumerate}}
	\item{\begin{enumerate}
	\item{Suppose that $i-1\in\{a_1,\dots,a_m\}$ and let $a_r=i-1$. If $a_{r+1}=i$, we show that $y_iv(a_1,\dots,a_m)=0$ by using that inductive hypothesis of $A$ and the first part of \cref{psigeneratorlem}. We now suppose that $a_{r+1}>i$ and provide details of the base case as follows. If $i=r+1$, then the term $\Psid{a_1-1}{1}\dots\Psid{a_r-1}{r}$ is trivial. Thus, by \cref{ypsicomm}, we have
	\begin{align*}
	y_iv(a_1,\dots,a_m)
	&=y_i\Psid{a_{i}-1}{i}\Psid{a_{i+1}-1}{i+1}\dots\Psid{a_m-1}{m}z_{((n-m),(1^m))}\\
	&=\Psid{a_i-1}{i+1}\Psid{a_{i+1}-1}{i+2}\dots\Psid{a_m-1}{m+1}y_i\Psiu{i}{m}z_{((n-m),(1^m))},
	\end{align*}
	where $y_i\Psiu{i}{m}z_{((n-m),(1^m))}=0$ by \cref{lem:psiup}. We then let $i>r+1$ and show that $y_iv(a_1,\dots,a_m)=0$ by using the inductive hypothesis of $C'$.
	}
	\item{Suppose that $i\not\in\{a_1,\dots,a_m\}$. We show that statement $B'$ holds by the inductive hypothesis of $C'$.	}
	\end{enumerate}	}
	\item{\begin{enumerate}
		\item{Suppose that $i\in\{a_1,\dots,a_m\}$ and show that statement $C$ holds by the inductive hypotheses of $A$ and $C$, together with the second part of \cref{psigeneratorlem}.}
		\item{Suppose that $a_{r-1}\leqslant{i-1}$ and $a_r\geqslant{i+1}$.
We let $i=r$ for the base case and remark that $y_rv(a_1,\dots,a_m)$ equals zero, similarly to part 2(a), by applying \cref{lem:psiup}.
We now suppose that $i>{r}$, and prove this case using the inductive hypotheses of $B$, $C$ and $C'$, together with the second part of \cref{psigeneratorlem}.}
		\end{enumerate}}\end{enumerate}
We now suppose that $i\equiv{\kappa_2-\kappa_1+1}\Mod{e}$, $i\not\in\{a_1,\dots,a_m\}$ and $i+1\in\{a_1,\dots,a_m\}$.
Let $a_r\leqslant{i-1}$ and $a_{r+1}=i+1$. By using \cref{ypsicomm} and \cref{psiyup}, we have
	\begin{align*}
	y_iv(a_1,\dots,a_m) 
	& = \Psid{a_1-1}{1}\dots\Psid{a_r-1}{r}y_i\psi_i\Psid{i-1}{r+1}\Psid{a_{r+2}-1}{r+2}\dots\Psid{a_m-1}{m}z_{((n-m),(1^m))} \\
	& = \Psid{a_1-1}{1}\dots\Psid{a_r-1}{r}(\psi_iy_{i+1}-1)\Psid{i-1}{r+1}\Psid{a_{r+2}-1}{r+2}\dots\Psid{a_m-1}{m}z_{((n-m),(1^m))}\\
	& = \Psid{a_1-1}{1}\dots\Psid{a_r-1}{r}\psi_iy_{i+1} v(1,\dots,r,i,a_{r+2},\dots,a_m)\\
	&\ -\Psid{a_1-1}{1}\dots\Psid{a_r-1}{r}\Psid{i-1}{r+1}\Psid{a_{r+2}-1}{r+2}\dots\Psid{a_m-1}{m}z_{((n-m),(1^m))},
\end{align*}

	where $y_{i+1} v(1,\dots,r,i,a_{r+2},\dots,a_m)$ equals zero by $B'$, whilst the second term is clearly non-zero.
	
Finally, suppose that $i\equiv{\kappa_2-\kappa_1+2}\Mod{e}$, $i-1\not\in\{a_1,\dots,a_m\}$ and $i\in\{a_1,\dots,a_m\}$.
We let $a_r\leqslant{i-2}$ and $a_{r+1}=i$. Then, by \cref{ypsicomm} and \cref{ypsidown}, we have
	\begin{align*}
	 y_i v(a_1,\dots,a_m) 
	& = \Psid{a_1-1}{1}\dots \Psid{a_r-1}{r}y_i\psi_{i-1}\Psid{i-2}{r+1} \Psid{a_{r+2}-1}{r+2}\dots\Psid{a_m-1}{m}z_{((n-m),(1^m))} \\
	& = \Psid{a_1-1}{1}\dots \Psid{a_r-1}{r}(\psi_{i-1}y_{i-1}+1)\Psid{i-2}{r+1} \Psid{a_{r+2}-1}{r+2}\dots\Psid{a_m-1}{m}z_{((n-m),(1^m))}\\
	& = \Psid{a_1-1}{1}\dots \Psid{a_r-1}{r}\psi_{i-1}y_{i-1} v(1,\dots,r,i-1,a_{r+2},\dots,a_m)\\
	&\ +\Psid{a_1-1}{1}\dots \Psid{a_r-1}{r}\Psid{i-2}{r+1} \Psid{a_{r+2}-1}{r+2}\dots\Psid{a_m-1}{m}z_{((n-m),(1^m))},
	\end{align*}

where $y_{i-1} v(1,\dots,r,i-1,a_{r+2},\dots,a_m)$ equals zero by $A$, whilst the second term is clearly non-zero.
\end{proof}

\subsection{Specht module homomorphisms}

We now consider Specht module $\mathscr{H}_n^{\Lambda}$-homomorphisms $H:S_{\lambda}\rightarrow{S_{\mu}}$ such that $\lambda$, and similarly $\mu$, is either a hook bipartition or a bipartition with only one non-empty component that is a hook partition. Suppose that $\ttt\in\std(\lambda)$. Then it is apparent from \cref{lem:idemp} that the homomorphism $H$ maps $v_{\ttt}\in\mathscr{H}_n^{\Lambda}$ to either $0$ or a linear combination of standard basis elements $v_{\tts}$ for some $\tts\in\text{Std}(\mu)$ when $H$ is a non-trivial $\mathscr{H}_n^{\Lambda}$-homomorphism, in which case $\mathbf{i}_{\ttt}=\mathbf{i}_{\tts}$. 

For a standard $((n-m,1^m),\varnothing)$-tableau $\tts$, we write $b_j:=\tts(j,1,1)$ for all $j\in\{2,\dots,m+1\}$. Then $\tts$ is completely determined by $b_2,\dots,b_{m+1}$. Analogously to the homogeneous elements of $((n-m),(1^m))$, we write
\[v_{\tts}=\Psid{b_2-1}{2}\Psid{b_3-1}{3}\dots\Psid{b_{m+1}-1}{m+1}z_{((n-m,1^m),\varnothing)}
	\in S_{((n-m,1^m),\varnothing)}.\]
Thus $v_{\tts}$ is completely determined by $b_2,\dots,b_{m+1}$, and we write $v(b_2,\dots,b_{m+1}):=v_{\tts}$.
Similarly, for $\ttt\in\std(\varnothing,(n-m,1^m))$, we define $v(c_2,\dots,c_{m+1})$ to be $v_{\ttt}\in{S_{(\varnothing,(n-m,1^m))}}$. We note that it will be obvious throughout for which Specht module $v(-,\dots,-)$ belongs to.

\begin{prop}\label{prop:homs}
\begin{enumerate}
\item{
If $n\equiv{\kappa_2-\kappa_1+1}\Mod{e}$ and $0\leqslant{m}\leqslant{n-1}$, then there exists the following non-zero homomorphism of Specht modules
	\begin{align*}
	\gamma_m:
	S_{((n-m),(1^m))}
	&\longrightarrow
	S_{((n-m-1),(1^{m+1}))},\ 
	\gamma_m\left(z_{((n-m),(1^m))}\right)=v(1,\dots,m,n).
	\end{align*}
}
\item{
If $\kappa_2\equiv{\kappa_1-1}\Mod{e}$, then there exist the following two non-zero homomorphisms of Specht modules.
	\begin{enumerate}
	\item{For $1\leqslant{m}\leqslant{n-1}$, we have
	\begin{align*}
	\chi_m:
	S_{((n-m,1^m),\varnothing)}
	&\longrightarrow
	S_{((n-m),(1^m))},\ 
	\chi_m\left(z_{((n-m,1^m),\varnothing)}\right)=v(2,3,\dots,m+1).
	\end{align*}
	}
	\item{For $1\leqslant{m}\leqslant{n}$, we have
	\begin{align*}	
	\tau_m:
	S_{((n-m),(1^m))}
	&\longrightarrow
	S_{(\varnothing,(n-m+1,1^{m-1}))},\ 
	\tau_m\left(z_{((n-m),(1^m))}\right)=z_{(\varnothing,(n-m+1,1^{m-1}))}.
	\end{align*}
	}
	\end{enumerate}
}
\item{
If $\kappa_2\equiv{\kappa_1-1}\Mod{e}$ and $n\equiv{0}\Mod{e}$, then there exist the following three non-zero homomorphisms of Specht modules.
	\begin{enumerate}
	\item{ For $0\leqslant{m}\leqslant{n-2}$, we have
	\begin{align*}	
	\alpha_m:
	S_{((n-m,1^m),\varnothing)}
	&\longrightarrow
	S_{((n-m-1,1^{m+1}),\varnothing)},\ 
	\alpha_m\left(z_{((n-m,1^m),\varnothing)}\right)=v(2,\dots,m+1,n).
	\end{align*}
	}
	\item{ For $0\leqslant{m}\leqslant{n-2}$, we have
	\begin{align*}
	\beta_m:
	S_{(\varnothing,(n-m,1^m))}
	&\longrightarrow
	S_{(\varnothing,(n-m-1,1^{m+1}))},\ 
	\beta_m\left(z_{(\varnothing,(n-m,1^m))}\right)=v(2,\dots,m+1,n).
	\end{align*}
	}
	\item{ For $1\leqslant{m}\leqslant{n-1}$, we have
	\begin{align*}
	\phi_m:
	S_{((n-m+1,1^{m-1}),\varnothing)}
	&\longrightarrow
	S_{((n-m),(1^m))},\ 
	\phi_m\left(z_{((n-m+1,1^{m-1}),\varnothing)}\right)=v(2,\dots,m,n).
	\end{align*}
	}
	\end{enumerate}
}
\end{enumerate}
\end{prop}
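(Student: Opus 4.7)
The plan is to establish each of the seven homomorphisms by showing that the designated image vector $v$ in the target Specht module satisfies all of the defining relations for $z_\lambda$ listed in the source presentation (\cref{cor:spechtpres}). Once this is verified, the cyclic-module universal property of \cref{Spechtdef} ensures that the assignment $z_\lambda\mapsto v$ extends uniquely to a well-defined $\mathscr{H}_n^{\Lambda}$-module homomorphism. Non-triviality of each homomorphism is then immediate: in every case the image is either $z_\mu$ or a standard basis element $v_\ttt$ of the target, which is non-zero by \cref{gradedmod}.

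The four families of relations split as follows. First, the idempotent relation $e(\mathbf{i}_\lambda)v=v$ reduces via \cref{lem:idemp} to the equality of residue sequences $\mathbf{i}_\lambda=\mathbf{i}_\ttt$, where $\ttt$ is the $\mu$-tableau indexing $v$. A direct computation of residues shows that the two sequences agree in every position except at the one entry that has been moved by the swap realising the homomorphism, and forcing agreement at that entry produces exactly one of the congruences $n\equiv\kappa_2-\kappa_1+1\pmod{e}$ (for $\gamma_m$), $\kappa_2\equiv\kappa_1-1\pmod{e}$ (for $\chi_m$ and $\tau_m$), or both simultaneously (for $\alpha_m$, $\beta_m$, $\phi_m$). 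This is precisely why each homomorphism is listed only under the given hypothesis.

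Next, the relations $y_rv=0$ for every $r$ and $\psi_rv=0$ for the indices $r$ required by the source presentation reduce, after expanding $v$ in the standard form $\Psid{\ast}{\ast}\cdots\Psid{\ast}{\ast}\,z_\mu$, to direct applications of \cref{psigeneratorlem,lem:psiup,ygeneratorlem}. For each such $r$ the image vector falls into one of the cases of these lemmas according to whether $r$ and $r+1$ belong to the index set of leg-positions, and the congruence hypotheses are exactly what guarantees that the residue condition appearing in the relevant lemma holds. In the few boundary indices $r$ adjacent to the junction between leg and arm, where the lemmas do not apply verbatim, the vanishing is obtained by a short explicit unwinding of the KLR relations (\cref{psiidemp}--\cref{psibraid}), mimicking the inductive arguments used in the proofs of \cref{psigeneratorlem,ygeneratorlem}; here again the hypotheses on $\kappa_2-\kappa_1$ and $n$ place the residues into the favourable special case of the braid rule.

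Finally, the Garnir relations from \cref{Garnir1,Garnir2} must be preserved. For sources of shape $((n-m),(1^m))$, the only Garnir elements are $\psi_{m+i}$ for $1\leqslant i\leqslant n-m-1$, and these have already been absorbed into the $\psi_rz_\lambda=0$ relations handled above, so no new work is needed. I expect the main obstacle to be the extra relation $\psi_1\psi_2\cdots\psi_{m+1}z_\lambda=0$ attached to the node $A_0$ in the one-component sources $((n-m,1^m),\varnothing)$ and $((n-m+1,1^{m-1}),\varnothing)$ that appear in $\alpha_m$, $\beta_m$ and $\phi_m$. The strategy is to write $v=\Psid{\ast}{\ast}\cdots\Psid{\ast}{\ast}\,z_\mu$, use \cref{psicomm} to commute the block $\psi_1\psi_2\cdots\psi_{m+1}$ past all factors whose indices differ from $\{1,\dots,m+1\}$ by more than one, and then identify the surviving tail, modulo the annihilating relations on $z_\mu$, with (a scalar multiple of) the Garnir relation of the target $S_\mu$, which vanishes by definition. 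The verification for $\chi_m$ is analogous but with one residual $\psi_{m+1}$ that must be absorbed through a single braid relation of type \cref{psibraid}; the hypothesis $\kappa_2\equiv\kappa_1-1\pmod{e}$ is precisely what places the three consecutive residues into a configuration in which that braid closes up to a term killed by the same-column relations in $S_\mu$.
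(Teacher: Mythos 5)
Your proposal follows essentially the same strategy as the paper's proof: verify that the proposed image vector satisfies the defining relations in the source presentation of \cref{cor:spechtpres} (idempotent, $y_r$, $\psi_r$, and Garnir), then invoke the cyclic universal property of \cref{Spechtdef}, with \cref{psigeneratorlem,lem:psiup,ygeneratorlem} doing the bulk of the $y_r$/$\psi_r$ verification and the KLR commutation/braid relations handling the boundary indices and the $A_0$-Garnir relation. One small inaccuracy worth flagging: for $\alpha_m$ and $\beta_m$ both source and target live entirely in a single component, so the residue-sequence comparison only forces $n\equiv 0\Mod e$ and never touches $\kappa_2-\kappa_1$; the proposition's joint hypothesis there is simply stronger than what the residue check needs, and since your argument only uses sufficiency this does not create a gap, but your claim that the computation ``produces both congruences'' for those two maps is not correct.
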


\begin{proof}
Residues are taken modulo $e$ throughout. 
\begin{enumerate}
\item{ 
Firstly, let $m<n-1$. We know from \cref{lem:std} that the $((n-m-1),(1^{m+1}))$-tableau
\begin{align*}
\ttt=\s{n-1}{m+1}\ttt_{((n-m-1),(1^{m+1}))}
=\gyoungxy(2,1,;\one;\hdts;\enminusone,:,;1,|\sesqui\vdts,;m,;n)
\end{align*}
is standard, and hence $v(1,\dots,m,n)=\Psid{n-1}{m+1}z_{((n-m-1),(1^{m+1}))}\neq{0}$.

Recall the presentation of $S_{((n-m),(1^m))}$ as given in the first part of \cref{cor:spechtpres}. We show that $\Psid{n-1}{m+1}z_{((n-m-1),(1^{m+1}))}$ satisfies the defining relations that $z_{((n-m),(1^m))}$ satisfies.
We first observe that $\ttt_{((n-m),(1^m))}$ and $\s{n-1}{m+1}\ttt_{((n-m-1),(1^{m+1}))}$ share the same $e$-residue sequence, that is, $\mathbf{i}_{((n-m),(1^m))}=\mathbf{i}_{\ttt}=\s{n-1}{m+1}\mathbf{i}_{((n-m-1),(1^{m+1}))}$. By part $1$ of \cref{cor:spechtpres} and \cref{psiidemp}, we thus have
\begin{align*}
e(\mathbf{i}_{((n-m),(1^m))})\Psid{n-1}{m+1}z_{((n-m-1),(1^{m+1}))}
&=e_{\ttt}\Psid{n-1}{m+1}z_{((n-m-1),(1^{m+1}))}\\
&=\Psid{n-1}{m+1}e(\mathbf{i}_{((n-m-1),(1^{m+1}))})z_{((n-m-1),(1^{m+1}))}\\
&=\Psid{n-1}{m+1}z_{((n-m-1),(1^{m+1}))}.
\end{align*}

Applying \cref{ypsicomm} and \cref{psicomm}, it is clear that $\Psid{n-1}{m+1}z_{((n-m-1),(1^{m+1}))}$ is killed by $y_1,\dots,y_m$ and $\psi_1,\dots,\psi_{m-1}$.
	\begin{enumerate}[(i)]
	\item{
	Let $i\in\{m+1,\dots,n-1\}$. Then, by \cref{ypsicomm}, we have
	\[
	y_i\Psid{n-1}{m+1}z_{((n-m-1),(1^{m+1}))}
	=\Psid{n-1}{i+1}y_i\Psid{i}{m+1}z_{((n-m-1),(1^{m+1}))},
	\]	
	which clearly equals zero by part 3 of \cref{ygeneratorlem} if $i-\kappa_2+\kappa_1\not\equiv{1,2}\Mod{e}$. First suppose that $i\equiv{1+\kappa_2-\kappa_1}\Mod{e}$.
	It follows that $i\leqslant{n-e}\leqslant{n-3}$ since $i\equiv{n}\Mod{e}$ and $i<n$. We thus have
		\begin{align*}
		&
		\Psid{n-1}{i+1}
		y_i\psi_i\Psid{i-1}{m+1}z_{((n-m-1),(1^{m+1}))}
		=
		\Psid{n-1}{i+1}
		(\psi_iy_{i+1}-1)\Psid{i-1}{m+1}z_{((n-m-1),(1^{m+1}))}
		=0,
		\end{align*}
	by \cref{psiyup} and \cref{cor:spechtpres}.
	Now suppose that $i\equiv{2+\kappa_2-\kappa_1}\Mod{e}$. Then
		\begin{align*}
		\Psid{n-1}{i+1}
		y_i\psi_i\Psid{i-1}{m+1}
		z_{((n-m-1),(1^{m+1}))}
		&=
		\Psid{n-1}{i+1}
		\Psid{i}{m+1}
		y_{i+1}
		z_{((n-m-1),(1^{m+1}))}
		=0,
		\end{align*}
	by \cref{psiyup} and \cref{cor:spechtpres}.
	Finally,
		$
		y_n\Psid{n-1}{m+1}z_{((n-m-1),(1^{m+1}))}
		=y_nv(1,\dots,m,n)$,
	which is zero by part 1 of \cref{ygeneratorlem} since $n\equiv{1+\kappa_2-\kappa_1}\Mod{e}$.
	}
	\item{For $i\in\{m+1,\dots,n-2\}$, we have
	\begin{align*}
	&\psi_i\Psid{n-1}{m+1}z_{((n-m-1),(1^{m+1}))}\\
	&=
	\Psid{n-1}{i+2}
	\psi_i\psi_{i+1}\psi_i
	\Psid{i-1}{m+1}
	z_{((n-m-1),(1^{m+1}))}\\
	&=
	\begin{cases}
	\Psid{n-1}{i+2}
	\psi_{i+1}\psi_i\psi_{i+1}
	\Psid{i-1}{m+1}
	z_{((n-m-1),(1^{m+1}))}
	&\text{if $i\not\equiv\kappa_2-\kappa_1\Mod{e}$;}\\
	\Psid{n-1}{i+2}
	(\psi_{i+1}\psi_i\psi_{i+1}-1)
	\Psid{i-1}{m+1}
	z_{((n-m-1),(1^{m+1}))}
	&\text{if $i\equiv\kappa_2-\kappa_1\Mod{e}$}
	\end{cases}\\
	&=0,
	\end{align*}
	by applying \cref{psicomm} and \cref{psibraid}.

	We now let $i=n-1$ and observe that
	\begin{align*}
	\psi_{n-1}\Psid{n-1}{m+1}z_{((n-m-1),(1^{m+1}))}
	&=\psi_{n-1}^2\Psid{n-2}{m+1}z_{((n-m-1),(1^{m+1}))}\\
	&=(y_{n-1}-y_n)
	\Psid{n-2}{m+1}z_{((n-m-1),(1^{m+1}))},
	\end{align*}
	by \cref{psipsi}.
	The second term of this expression is clearly zero, and the first term is zero by part 3 of \cref{ygeneratorlem} since $n\equiv{\kappa_2-\kappa_1+1}\Mod{e}$.}
	\end{enumerate}
	
Finally, let $m=n-1$. Clearly, $z_{(\varnothing,(1^n))}$ is non-zero. We see that $\ttt_{((1),(1^{n-1}))}$ and $\ttt_{(\varnothing,(1^n))}$ share the $e$-residue sequence $\mathbf{i}_{((1),(1^{n-1}))}$ since
	$\res(1,1,1)
	=\kappa_1
	=\kappa_2+1-n
	=\res(2,n,1)$.
We thus have $e(\mathbf{i}_{((1),(1^{n-1}))})z_{(\varnothing,(1^n))}=z_{(\varnothing,(1^n))}$.
We note that the remaining relations are trivial.}

\item{ For the first part, we show that $\psi_1\dots\psi_mz_{((n-m),(1^m))}$ satisfies the defining relations that $z_{((n-m,1^m),\varnothing)}$ satisfies in the second part of \cref{cor:spechtpres} by using the Khovanov--Lauda--Rouquier algebra and Specht module presentations, and for the second part it suffices to check that $\ttt_{((n-m),(1^m))}$ and $\ttt_{(\varnothing,(n-m+1,1^{m-1}))}$ share the same $e$-residue sequence.}

\item{ For the first part, we show that $\Psid{n-1}{m+2}z_{((n-m-1,1^{m+1}),\varnothing)}$ satisfies the defining relations that $z_{((n-m,1^m),\varnothing)}$ satisfies in the second part of \cref{cor:spechtpres} by using the Khovanov--Lauda--Rouquier algebra and Specht module presentations, which the second part follows from since $\ttt_{(\varnothing,(n-m,1^m))}$ and $\s{n-1}{m+2}\ttt_{(\varnothing,(n-m-1,1^{m+1}))}$ share the same $e$-residue sequence.	For the third part, we show that $\psi_1\dots\psi_{m-1}\Psid{n-1}{m}z_{((n-m),(1^m))}$ satisfies the defining relations that $z_{((n-m+1,1^{m-1}),\varnothing)}$ satisfies, which can be deduced from \cref{cor:spechtpres}.	}
\qedhere
\end{enumerate}
\end{proof}

We can compose the above homomorphisms of Specht modules as follows.

\begin{lem}\label{lem:homcomps}
	If $\kappa_2\equiv{\kappa_1-1}\Mod{e}$ and $n\equiv{0}\Mod{e}$, then
	\begin{enumerate}
		\item{$\beta_{m-1}\circ{\tau_m}
			=\tau_{m+1}\circ{\gamma_m}$, and}
		\item{$\gamma_m\circ{\chi_m}
			=\chi_{m+1}\circ{\alpha_m}=\phi_{m+1}$.}
	\end{enumerate}
\end{lem}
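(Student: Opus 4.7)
The plan is to verify each identity by evaluating both sides on the distinguished generator $z_\lambda$ of the source Specht module. Since each Specht module is cyclic and every $\mathscr{H}_n^\Lambda$-homomorphism commutes with the algebra action, this is enough. For each composition, I unpack the image using the explicit formulas in \cref{prop:homs} together with the basis conventions $v(a_1,\dots,a_m)$, $v(b_2,\dots,b_{m+1})$, $v(c_2,\dots,c_{m+1})$, writing each as a specific product of $\psi$-generators applied to some $z_\mu$; the two sides then agree either identically or after a single application of the commutation relation \cref{psicomm}.

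For part (1), applying $\tau_{m+1}\circ\gamma_m$ to $z_{((n-m),(1^m))}$ yields $v(1,\dots,m,n)\in S_{((n-m-1),(1^{m+1}))}$. Since $a_j=j$ for $j\le m$ forces each factor $\Psid{a_j-1}{j}$ in $\psi_{w_\ttt}$ to be an empty product, this equals $\Psid{n-1}{m+1}z_{((n-m-1),(1^{m+1}))}$; then $\tau_{m+1}$, which commutes with all $\psi_r$ and sends $z$ to $z$, gives $\Psid{n-1}{m+1}z_{(\varnothing,(n-m,1^m))}$. The other side is $(\beta_{m-1}\circ\tau_m)(z_{((n-m),(1^m))})=v(2,\dots,m,n)\in S_{(\varnothing,(n-m,1^m))}$; setting $c_j=j$ for $2\le j\le m$ and $c_{m+1}=n$ trivialises the leading factors, so this is also $\Psid{n-1}{m+1}z_{(\varnothing,(n-m,1^m))}$, as required.

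Part (2) proceeds by the same bookkeeping. For $\gamma_m\circ\chi_m$, applying $\chi_m$ gives $\psi_1\psi_2\cdots\psi_m\,z_{((n-m),(1^m))}$, whence $(\gamma_m\circ\chi_m)(z_{((n-m,1^m),\varnothing)})=\psi_1\psi_2\cdots\psi_m\Psid{n-1}{m+1}z_{((n-m-1),(1^{m+1}))}$, which is exactly the expansion of $\phi_{m+1}(z_{((n-m,1^m),\varnothing)})=v(2,\dots,m+1,n)$ in $S_{((n-m-1),(1^{m+1}))}$ obtained from $\psi_{w_\ttt}=s_1s_2\cdots s_m\,\s{n-1}{m+1}$. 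For $\chi_{m+1}\circ\alpha_m$, the same expansion yields $\Psid{n-1}{m+2}\psi_1\psi_2\cdots\psi_{m+1}z_{((n-m-1),(1^{m+1}))}$. Since every $\psi_r$ appearing in $\Psid{n-1}{m+2}$ has index at least $m+2$, by \cref{psicomm} it commutes past each of $\psi_1,\dots,\psi_m$, and the expression rewrites as $\psi_1\psi_2\cdots\psi_m\Psid{n-1}{m+2}\psi_{m+1}=\psi_1\psi_2\cdots\psi_m\Psid{n-1}{m+1}$, again matching $\phi_{m+1}(z)$. The main obstacle is purely organisational: keeping track of which Specht module each $v(-,\dots,-)$ inhabits and correctly reading off the associated $\psi_{w_\ttt}$; the only nontrivial algebraic step is the single commutation via \cref{psicomm} in the final equality.
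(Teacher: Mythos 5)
Your proof is correct and takes essentially the same approach as the paper's: evaluate each composite on the cyclic generator $z_\lambda$, translate the resulting $v(-,\dots,-)$ into the corresponding product of $\psi$-generators using the explicit formulas in \cref{prop:homs}, and invoke \cref{psicomm} for the one reordering needed in part (2). The only slip is cosmetic: in part (1) you say ``applying $\tau_{m+1}\circ\gamma_m$ yields $v(1,\dots,m,n)$'' when you mean that applying $\gamma_m$ alone yields this vector, with $\tau_{m+1}$ applied afterwards.
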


\begin{proof} To check the above equalities, we show that the generator $z_{((n-m),(1^m))}$ satisfies them. We apply \cref{prop:homs} throughout.
	\begin{enumerate}
		\item{We have
			\begin{align*}
			\beta_{m-1}\circ{\tau_m}\left(z_{((n-m),(1^m))}\right)
			=
			\beta_{m-1}\left(
			z_{(\varnothing,(n-m+1,1^{m-1}))}
			\right)
			&=
			\Psid{n-1}{m+1}z_{(\varnothing,(n-m,1^m))}
			\\&=
			\tau_{m+1}\left(
			\Psid{n-1}{m+1}z_{((n-m-1),(1^{m+1}))}
			\right)
			\\&=
			\tau_{m+1}\circ{\gamma_m}\left(z_{((n-m),(1^m))}\right).
			\end{align*}}
		\item{We first observe that $\phi_{m+1}\left(z_{((n-m,1^m),\varnothing)}\right)=\Psiu{1}{m}\Psid{n-1}{m+1}z_{((n-m-1),(1^{m+1}))}$. By applying \cref{psicomm}, we see that
			\begin{align*}
			\gamma_m\circ{\chi_m}
			\left(z_{((n-m,1^m),\varnothing)}\right)
			&=
			\gamma_m
			\left(\Psiu{1}{m}
			z_{((n-m),(1^m))}\right)\\
			&=
			\Psiu{1}{m}
			\Psid{n-1}{m+1}
			z_{((n-m-1),(1^{m+1}))}
			\\&=
			\chi_{m+1}\left(
			\Psid{n-1}{m+2}
			z_{((n-m-1,1^{m+1}))}
			\right)
			\\&=
			\chi_{m+1}\circ{\alpha_m}
			\left(z_{((n-m,1^m),\varnothing)}\right).
			\end{align*}}
		\qedhere
	\end{enumerate}
\end{proof}

We now determine when the aforementioned Specht module homomorphisms act non-trivially.

\begin{lem}\label{image}
Let $\tts\in\std((n-m),(1^m))$, $\ttt\in\std((n-m,1^m),\varnothing)$ and $\ttu\in\std(\varnothing,(n-m,1^m))$, where $\tts$, $\ttt$ and $\ttu$ are determined by $\{a_1,\dots,a_m\}$, $\{b_2,\dots,b_{m+1}\}$ and $\{c_2,\dots,c_{m+1}\}$, respectively.
\begin{enumerate}
\item{Let $n\equiv{\kappa_2-\kappa_1+1}\Mod{e}$. Then $\gamma_m(v_{\tts})\neq{0}$ if and only if $a_m<n$, in which case $\gamma_m(v_{\tts})=v(a_1,\dots,a_m,n)\in{S_{((n-m-1),(1^{m+1}))}}$.
}
\item{
Let $\kappa_2\equiv{\kappa_1-1}\Mod{e}$.
	\begin{enumerate}
	\item{Then $0\neq\chi_m(v_{\ttt})=v(b_2,\dots,b_{m+1})\in{S_{((n-m),(1^m))}}$.
	}
	\item{
	Then $\tau_m(v_{\ttt})\neq{0}$ if and only if $a_1=1$, in which case $\tau_m(v_{\ttt})=v(1,a_2,\dots,a_m)\in{S_{(\varnothing,(n-m+1,1^{m-1}))}}$.	
	}
	\end{enumerate}
}
\item{
Let $\kappa_2\equiv{\kappa_1-1}\Mod{e}$ and $n\equiv{0}\Mod{e}$.
	\begin{enumerate}
	\item{
	Then $\alpha_m(v_{\ttt})\neq{0}$ if and only if $b_{m+1}<n$, in which case $\alpha_m(v_{\ttt})=v(b_2,\dots,b_{m+1},n)\in{S_{((n-m-1,1^{m+1}),\varnothing)}}$.
	}
	\item{
	Then $\beta_m(v_{\ttu})\neq{0}$ if and only if $c_{m+1}<n$, in which case $\beta_m(v_{\ttu})=v(c_2,\dots,c_{m+1},n)\in{S_{(\varnothing,(n-m-1,1^{m+1}))}}$.
	}
	\item{
	Then $\phi_{m+1}(v_{\ttt})\neq{0}$ if and only if $b_{m+1}<n$, in which case $\phi_{m+1}(v_{\ttt})=v(b_2,\dots,b_{m+1},n)\in{S_{((n-m-1),(1^{m+1}))}}$.
	}
	\end{enumerate}
}
\end{enumerate}
\end{lem}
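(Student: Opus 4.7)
The plan is to verify each of the six statements by exploiting the fact that each $\mathscr{H}_n^\Lambda$-homomorphism $H$ commutes with multiplication on the right by $\psi$'s, so $H(v_\tts)=\psi_{w_\tts}H(z_\lambda)$ (and analogously for $v_\ttt$, $v_\ttu$), where $H(z_\lambda)$ is given explicitly by \cref{prop:homs}. Each case naturally splits into a non-degenerate subcase (giving an explicit basis element of $S_\mu$) and, where applicable, a degenerate subcase (giving zero).

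For the non-degenerate subcases---namely $a_m<n$ for $\gamma_m$, $b_{m+1}<n$ for $\alpha_m$, $c_{m+1}<n$ for $\beta_m$, always for $\chi_m$, $a_1=1$ for $\tau_m$, and $b_{m+1}<n$ for $\phi_{m+1}$---I would show that the concatenated $\psi$-word $\psi_{w_\tts}H(z_\lambda)$ (or analogue) coincides with a preferred reduced expression for the target basis element. For $\gamma_m$, $\alpha_m$, and $\beta_m$, the strict inequality ensures that appending $n$ yields a strictly increasing sequence of leg/column entries, so $\Psid{a_1-1}{1}\cdots\Psid{a_m-1}{m}\Psid{n-1}{m+1}$ (and its analogues for $\alpha_m$ and $\beta_m$) is already the preferred reduced expression of the claimed standard tableau. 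For $\chi_m$, I would telescope $\Psid{b_2-1}{2}\cdots\Psid{b_{m+1}-1}{m+1}\psi_1\psi_2\cdots\psi_m$ down to $\Psid{b_2-1}{1}\Psid{b_3-1}{2}\cdots\Psid{b_{m+1}-1}{m}$ by repeatedly using \cref{psicomm} to move each $\psi_j$ leftward through the higher-indexed factors $\Psid{b_i-1}{i}$ with $i>j+1$, and then merging via the identity $\Psid{b_{j+1}-1}{j+1}\psi_j=\Psid{b_{j+1}-1}{j}$. For $\tau_m$ with $a_1=1$, the factor $\Psid{a_1-1}{1}$ is the empty product, and the residual $\Psid{a_2-1}{2}\cdots\Psid{a_m-1}{m}$ is itself the preferred reduced expression in $S_{(\varnothing,(n-m+1,1^{m-1}))}$ for the standard tableau with column $(1,a_2,\dots,a_m)$ (noting $\mathbf{i}_{((n-m),(1^m))}=\mathbf{i}_{(\varnothing,(n-m+1,1^{m-1}))}$ under $\kappa_2\equiv\kappa_1-1$). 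Finally, $\phi_{m+1}$ follows from $\gamma_m\circ\chi_m=\phi_{m+1}$ (\cref{lem:homcomps}) together with the two preceding cases.

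For the degenerate zero subcases, the strategy is an idempotent/residue analysis via \cref{lem:idemp}. By \cref{psiidemp}, the element $H(v_\tts)=\psi_{w_\tts}v_\ttw$ lies in $e(w_\tts\mathbf{i}_\ttw)\cdot S_\mu$ (where $\ttw$ is the standard $\mu$-tableau underlying $H(z_\lambda)$); since each standard basis element $v_{\tts'}$ lives in $e(\mathbf{i}_{\tts'})\cdot S_\mu$, it suffices to show $w_\tts\mathbf{i}_\ttw\neq\mathbf{i}_{\tts'}$ for all $\tts'\in\std(\mu)$. For $\gamma_m$ with $a_m=n$ (so $w_\tts(m)=n$, hence $w_\tts^{-1}(n)=m$), one computes $(w_\tts\mathbf{i}_\ttw)(n)=\mathbf{i}_\ttw(m)=\kappa_2-m+1$; whereas in any standard $((n-m-1),(1^{m+1}))$-tableau the entry $n$ occupies either the bottom of the leg (residue $\kappa_2-m$) or the end of the arm (residue $\kappa_1+n-m-2\equiv\kappa_2-m-1$, using $n\equiv\kappa_2-\kappa_1+1\Mod{e}$). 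Since $e\geqslant 3$, the three residues $\kappa_2-m+1$, $\kappa_2-m$, $\kappa_2-m-1$ are pairwise distinct mod $e$, so no match is possible. The zero cases of $\alpha_m$ and $\beta_m$ are handled identically, using $n\equiv 0\Mod{e}$ to compute the arm-residue at position $n$. For $\tau_m$ with $a_1\geqslant 2$, position $1$ of $\mathbf{i}_\tts$ equals $\kappa_1$ (entry $1$ sits at the arm node $(1,1,1)$), but every standard $(\varnothing,(n-m+1,1^{m-1}))$-tableau places $1$ at $(1,1,2)$ with residue $\kappa_2$, and $\kappa_1\not\equiv\kappa_2\Mod{e}$ by hypothesis and $e\geqslant 3$. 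The main obstacle is the telescoping bookkeeping in $\chi_m$ and the careful tracking of position $n$ (respectively, position $1$) under $w_\tts$ in the degenerate cases; both reduce, however, to elementary manipulations with \cref{psicomm,psiidemp} and to the distinctness of a small finite collection of residues modulo $e\geqslant 3$.
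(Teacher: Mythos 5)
Your proposal is correct, and the overall structure (split into non-degenerate and degenerate subcases, obtain the explicit basis element in the former, show vanishing in the latter) matches the paper's. For the non-degenerate cases your method is essentially the paper's: $H(v_\tts)=\psi_{w_\tts}H(z_\lambda)$ (this is multiplication on the \emph{left} by $\psi_{w_\tts}$, not the right, but the formula you write is the right one), and the resulting word is recognised as the preferred reduced expression for the target tableau, with a short $\cref{psicomm}$ telescoping argument for $\chi_m$ and the identity $\phi_{m+1}=\gamma_m\circ\chi_m$ for $\phi_{m+1}$.

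Where you genuinely depart from the paper is in the degenerate (zero) cases. The paper computes directly in the algebra: for $\alpha_m$ with $b_{m+1}=n$ it rewrites $\Psid{n-1}{m+1}\Psid{n-1}{m+2}z_\mu$ and appeals to part one of $\cref{psigeneratorlem}$ to kill $\psi_{n-1}v(b_2,\dots,b_m,n-1,n)$ --- even though $\cref{psigeneratorlem}$ is proved only for $S_{((n-m),(1^m))}$, so the paper is implicitly invoking an analogue for the single-component hook Specht module, and the commutation of $\psi_{n-1}$ past $\Psid{b_m-1}{m}$ is delicate when $b_m=n-1$. You instead work at the level of idempotents via $\cref{lem:idemp}$: since $H(v_\tts)=\psi_{w_\tts}v_\ttw$ lies in $e(w_\tts\mathbf{i}_\ttw)S_\mu$, it suffices that the residue at position $n$ (respectively position $1$ for $\tau_m$) of $w_\tts\mathbf{i}_\ttw$ disagrees with that of every standard $\mu$-tableau. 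The bookkeeping ($w_\tts^{-1}(n)=m$, then $\mathbf{i}_\ttw(m)=\kappa_2-m+1$ versus the two possible residues $\kappa_2-m$, $\kappa_2-m-1$ at a removable node, using the congruence on $n$) is elementary and checks out. This residue-mismatch argument is more self-contained than the paper's --- it uses only $\cref{psiidemp}$, $\cref{lem:idemp}$ and the combinatorics of removable nodes, and it sidesteps the need for $\psi$-action lemmas on the auxiliary Specht modules --- at the cost of a careful position-tracking computation. Both are valid; yours is arguably cleaner for these subcases. (One tiny remark: for $\tau_m$, $\kappa_1\not\equiv\kappa_2$ already follows from $\kappa_2\equiv\kappa_1-1$ and $e\geqslant 2$; the hypothesis $e\geqslant 3$ is not needed there.)
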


\begin{proof}
We provide details only for the third part since the other parts are proved similarly. We write $\psi_{w_{\ttt}}=\Psid{b_2-1}{2}\dots\Psid{b_{m+1}-1}{m+1}$ and apply the third part of \cref{prop:homs} throughout.

	\begin{enumerate}[(a)]
	\item{
	Let $b_{m+1}<n$. Then, for $m<n-1$,
	\begin{align*}
	\alpha_m\left(\psi_{w_{\ttt}}z_{((n-m,1^m),\varnothing)}\right)
	& = \psi_{w_{\ttt}}\Psid{n-1}{m+2}z_{((n-m-1,1^{m+1}),\varnothing)} 
	= v(b_2,\dots,b_{m+1},n) \neq {0}.
	\end{align*}
	Instead, suppose that $b_{m+1}=n$. Then
	\begin{align*}
	\alpha_m\left(\psi_{w_{\ttt}}z_{((n-m,1^m),\varnothing)}\right)
	& = \Psid{b_2-1}{2}\dots\Psid{b_m-1}{m}\Psid{n-1}{m+1}\Psid{n-1}{m+2}
	z_{((n-m-1,1^{m+1}),\varnothing)}, \\
	& = \Psid{b_2-1}{2}\dots\Psid{b_m-1}{m}\psi_{n-1}
	v(b_2,\dots,b_m,n-1,n),
\end{align*}
which equals zero since $\psi_{n-1}v(b_2,\dots,b_m,n-1,n)$ is zero by part one of \cref{psigeneratorlem}.		
	}
	\item{Similar to the previous part.}
	\item{By applying parts 1 and 2(a) of this result together with the second part of \cref{lem:homcomps}, we have
	\begin{align*}
	\phi_{m+1}\left( \psi_{w_{\ttt}}z_{((n-m,1^m),\varnothing)} \right)
	& = \gamma_m\circ\chi_m\left(\psi_{w_{\ttt}}z_{((n-m,1^m),\varnothing)} \right)\\
	& = \gamma_m\left( \Psid{b_2-1}{1}\Psid{b_3-1}{2}\dots\Psid{b_{m+1}-1}{m}z_{((n-m),(1^m))} \right)\\
	& = \begin{cases}
			v(b_2,\dots,b_{m+1},n)\neq 0
			&\text{if $b_{m+1}<n$;}\\
			0&\text{if $b_{m+1}=n$.}
	\end{cases}
	\end{align*}	
	}
\qedhere
\end{enumerate}
\end{proof}

We thus have basis elements $v(a_1,\dots,a_m)\in{S_{((n-m),(1^m))}}$ and $v(b_2,\dots,b_{m+1})\in{S_{((n-m,1^m),\varnothing)}}$, where $v(a_1,\dots,a_m)$ corresponds to the standard $((n-m),(1^m))$-tableau with $a_1,\dots,a_m$ lying in its leg, and $v(b_2,\dots,b_{m+1})$ corresponds to the standard $((n-m,1^m),\varnothing)$-tableau with $b_2,\dots,b_{m+1}$ lying in its leg.

We can informally think of the action of $\gamma_m$ on $v(a_1,\dots,a_m)$ by its corresponding action on the standard $((n-m),(1^m))$-tableau determined by $a_1,\dots,a_m$, which moves the node $(1,n-m,1)$ containing entry $n$ to the addable node at the end of its leg as follows
\[
\gyoungxy(1.2,1.2,;;|\sesqui\hdts;;n,,\aone,|\sesqui\vdts,\aem)
\xmapsto{\:\gamma_m\:}
\gyoungxy(1.2,1.2,;;|\sesqui\hdts;,,\aone,|\sesqui\vdts,\aem,!\gr{n})\:.
\]
Homomorphisms $\alpha_m$ and $\beta_m$ act similarly on standard $((n-m,1^m),\varnothing)$- and $(\varnothing,(n-m,1^m))$-tableaux, respectively.

We now observe the action of $\chi_m$ on $v(b_2,\dots,b_{m+1})$ by its corresponding action on the standard $((n-m,1^m),\varnothing)$-tableau determined by $b_2,\dots,b_{m+1}$, which essentially splits its first row and its remaining rows into two separate components as follows
\[
\gyoungxy(1.9,1,;1;;|\sesqui\hdts;,\btwo,|\sesqui\vdts,\bemplusone,,:\emset)
\xmapsto{\:\chi_m\:}
\gyoungxy(1.9,1,;1;;|\sesqui\hdts;,,\btwo,|\sesqui\vdts,\bemplusone)\:.
\]

\subsection{Exact sequences of Specht modules}

We obtain exact sequences of Specht modules in this section, similar in nature to the exact sequence of $\mathscr{H}_n^{\Lambda}$-homomorphisms given in \cite[Corollary 5.17]{dels}. Following part 1 of \cref{image}, we introduce a useful bijection between sets of basis elements of Specht modules, which is a restriction of the Specht module homomorphisms $\gamma_m$ given above.

\begin{lem}\label{gamiso}
Let $n\equiv{\kappa_2-\kappa_1+1}\Mod{e}$.
Define
	\[M:=
	\{
	v_{\ttt}\mid
	\ttt\in\std\left((n-m),(1^m)\right),
	\ttt(1,n-m,1)=n
	\}\]
and
	\[
	N:=\{
	v_{\ttt}\mid
	\ttt\in\std\left((n-m-1),(1^{m+1})\right),
	\ttt(m+1,1,2)=n
	\}
	.\]
Then $\gamma_m$ restricts to a bijection from $M$ to $N$.
\end{lem}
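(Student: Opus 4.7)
The plan is to show that both $M$ and $N$ are naturally indexed by the same set of tuples $1\leqslant a_1<\dots<a_m\leqslant n-1$, and that $\gamma_m$ realizes the identity under this identification. The main ingredient is part 1 of \cref{image}, which tells us exactly how $\gamma_m$ acts on standard basis elements; beyond this, the argument is a combinatorial bookkeeping exercise with no algebraic obstruction.

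First, I would unwind the parametrization of $M$. Any $\ttt\in\std((n-m),(1^m))$ is uniquely determined by the strictly increasing sequence $a_1<\dots<a_m$ of entries in its leg (with $a_j=\ttt(j,1,2)$). The condition $\ttt(1,n-m,1)=n$ says that $n$ is the last entry of the arm, which is equivalent to saying $n\notin\{a_1,\dots,a_m\}$, i.e.\ $a_m\leqslant n-1$. Hence $M$ is in bijection with
\[
X:=\left\{(a_1,\dots,a_m)\in\mathbb{Z}^m\ \middle|\ 1\leqslant a_1<\dots<a_m\leqslant n-1\right\},
\]
via $(a_1,\dots,a_m)\mapsto v(a_1,\dots,a_m)$.

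Next, I would parametrize $N$ in the same way. A standard $((n-m-1),(1^{m+1}))$-tableau $\ttu$ is determined by the entries $a'_1<\dots<a'_{m+1}$ in its leg, and the requirement $\ttu(m+1,1,2)=n$ forces $a'_{m+1}=n$. The remaining data is the strictly increasing sequence $a'_1<\dots<a'_m$ with $a'_m<n$, so $N$ is also in bijection with $X$, via $(a_1,\dots,a_m)\mapsto v(a_1,\dots,a_m,n)$.

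Finally, I would invoke part 1 of \cref{image}: for $v(a_1,\dots,a_m)\in M$, since $a_m<n$ is automatic, we have
\[
\gamma_m\bigl(v(a_1,\dots,a_m)\bigr)=v(a_1,\dots,a_m,n),
\]
which is precisely the element of $N$ corresponding to the same tuple in $X$. Thus $\gamma_m|_M$ is the composition of the two identifications with $X$, hence a bijection from $M$ onto $N$. No step here is a genuine obstacle; the only thing to be careful about is confirming that the conditions $\ttt(1,n-m,1)=n$ and $\ttu(m+1,1,2)=n$ translate cleanly into the inequalities $a_m<n$ and $a'_{m+1}=n$, which is immediate from the standard column-filling convention.
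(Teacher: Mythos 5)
Your proof is correct, and it is exactly the argument the paper leaves implicit: the lemma is stated without proof immediately after \cref{image}, being regarded as an immediate consequence of part~1 of that result together with the observation that $\ttt(1,n-m,1)=n$ is equivalent to $a_m<n$ and $\ttu(m+1,1,2)=n$ is equivalent to $a'_{m+1}=n$. Your careful unwinding of both parametrizations via the common index set $X$ fills in precisely those details, so there is nothing to add.
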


We now determine standard basis elements of the kernels and the images of the Specht modules homomorphisms given in \cref{prop:homs}. It follows from \cref{gradedmod} that their bases are a subset of the bases of Specht modules labelled by hook bipartitions, whose basis elements are labelled by standard $((n-m),(1^m))$-tableaux.

\begin{lem}\label{imker}
\begin{enumerate}
\item{
If $n\equiv{\kappa_2-\kappa_1+1}\Mod{e}$, then
	\begin{enumerate}
	\item{
	$\im(\gamma_m)
	=\spn\left\{
	v_{\ttt}\:|\:\ttt\in\std\left((n-m-1),(1^{m+1})\right),
	\ttt(m+1,1,2)=n
	\right\}$;
	}
	\item{
	$\ker(\gamma_m)
	=\spn\left\{
	v_{\ttt}\:|\:\ttt\in\std\left((n-m),(1^m)\right),
	\ttt(m,1,2)=n
	\right\}$.}
	\end{enumerate}}
\item{If $\kappa_2\equiv\kappa_1-1\Mod{e}$, then
	\begin{enumerate}
	\item{\begin{enumerate}
		\item{$\im(\chi_m)=\spn\left\{v_{\ttt}\:|\:\ttt\in\std\left((n-m),(1^m)\right),\ttt(1,1,1)=1\right\}$; }
			\item{ $\ker(\chi_m)=0$; }
		\end{enumerate}}
	\item{\begin{enumerate}
		\item{ $\im(\tau_m)=S_{(\varnothing,(n-m+1,1^{m-1}))}$; }
		\item{ $\ker(\tau_m)=\spn\{v_{\ttt}\:|\:\ttt\in\std((n-m),(1^m)),\ttt(1,1,1)=1\}$. }
		\end{enumerate}}
	\end{enumerate}}
\item{If $\kappa_2\equiv\kappa_1-1\Mod{e}$ and $n\equiv{0}\Mod{e}$, then
	\begin{enumerate}
	\item{$\im(\alpha_m)=\spn\left\{
	v_{\ttt}\:|\:\ttt\in\std((n-m-1,1^{m+1}),\varnothing),\:\ttt(m+2,1,1)=n
	\right\}$;}
	\item{$\im(\beta_m)=\spn\left\{
	v_{\ttt}\:|\:\ttt\in\std(\varnothing,(n-m-1,1^{m+1}),\:\ttt(m+2,1,2)=n
	\right\}$;}
	\item{\begin{enumerate}
		\item If $m<n-1$, then
		\[\im(\phi_m)=\spn\left\{v_{\ttt}\mid
		\ttt\in\std\left((n-m),(1^m)\right),\ttt(1,1,1)=1,\ttt(m,1,2)=n\right\}.\]
		\item If $m=n-1$, then
		\[\im(\phi_m)=\spn\{
		v_{\ttt}\mid
		\ttt\in\std\left((1),(1^{n-1})\right),\ttt(1,1,1)=1\}.\]
		\end{enumerate}}
	\end{enumerate}}
\end{enumerate}
\end{lem}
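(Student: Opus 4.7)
The plan is to exploit Lemma~\ref{image}, which describes the action of each of these homomorphisms on every standard basis element: the image is either zero or a single standard basis element of the target Specht module. Combined with the standard basis from Theorem~\ref{gradedmod}, this lets me read off the image as the span of the non-zero images and the kernel as the span of the basis elements sent to zero, provided that the non-zero images are pairwise distinct in the target. For each of $\gamma_m, \alpha_m, \beta_m$ and $\phi_m$, this distinctness is immediate because the map adjoins $n$ in a fixed new node while preserving the remaining leg indices (for $\gamma_m$ it is also the content of Lemma~\ref{gamiso}); for $\chi_m$ and $\tau_m$, distinctness follows because they reinterpret a fixed tuple of leg indices as a standard tableau of a different shape.

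For part~(1), Lemma~\ref{image}(1) sends $v(a_1,\ldots,a_m)$ to $v(a_1,\ldots,a_m,n)$ when $a_m < n$ and to $0$ when $a_m = n$; the condition $a_m < n$ is equivalent to $n$ occupying the bottommost leg cell of the target tableau, i.e.\ $\ttt(m+1,1,2) = n$, while $a_m = n$ translates into $\ttt(m,1,2) = n$ on the source side, giving (1a) and (1b). For~(2a), $\chi_m$ never kills a standard basis vector and sends $v(b_2,\ldots,b_{m+1}) \in S_{((n-m,1^m),\varnothing)}$ to the standard basis vector of $S_{((n-m),(1^m))}$ whose leg is $\{b_2,\ldots,b_{m+1}\} \subseteq \{2,\ldots,n\}$; the constraint $b_2 \geqslant 2$ forces $1$ into the first arm cell, so $\ttt(1,1,1) = 1$, and every such target tableau arises this way. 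For~(2b), Lemma~\ref{image}(2b) makes $\tau_m$ zero on basis vectors with $a_1 \neq 1$ (equivalently, $\ttt(1,1,1) = 1$ in the source) and bijects the remaining basis vectors with the full standard basis of $S_{(\varnothing,(n-m+1,1^{m-1}))}$, as $(a_2, \ldots, a_m)$ ranges over every $(m-1)$-element subset of $\{2,\ldots,n\}$, yielding surjectivity and the stated kernel.

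For part~(3), items~(3a) and~(3b) are structurally identical to~(1a), using Lemma~\ref{image}(3a) and~(3b) respectively. For~(3c), applying Lemma~\ref{image}(3c) shows the non-zero images of $\phi_m$ are the basis vectors $v(b_2,\ldots,b_m,n) \in S_{((n-m),(1^m))}$ with $2 \leqslant b_2 < \cdots < b_m < n$; equivalently, the target tableaux are those with $1$ in the arm and $n$ in the bottommost leg cell, giving (3c)(i). When $m = n-1$ the arm has length $1$, so the condition $\ttt(1,1,1) = 1$ already pins down the unique such standard tableau and the second condition is automatic, accounting for the formally different statement (3c)(ii). The only obstacle throughout is the translation between the tuple conditions appearing in Lemma~\ref{image} ($a_m < n$, $a_1 = 1$, and so on) and the tableau-node conditions of the present statement; once that dictionary is set up, the images and kernels follow at once.
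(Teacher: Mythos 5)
Your proposal is correct and takes essentially the same approach as the paper, which simply observes that the images (and hence the kernels) follow immediately from Lemma~\ref{image} once one notes that each homomorphism sends standard basis vectors to distinct standard basis vectors or to zero. You spell out the translation between the leg-index conditions of Lemma~\ref{image} and the tableau-node conditions of the statement, which the paper leaves to the reader, but the underlying argument is identical.
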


\begin{proof}
The images of the Specht module homomorphisms $\gamma_m$, $\chi_m$, $\alpha_m$, $\beta_m$ and $\phi_m$ are immediate from \cref{image}. We subsequently determine the spanning sets of the respective kernels.
\end{proof}

An immediate consequence is the following result, which aids us in finding the composition factors of Specht modules labelled by hook bipartitions.

\begin{lem}\label{exact}
\begin{enumerate}
\item{
If $n\equiv{\kappa_2-\kappa_1+1}\Mod{e}$, then we have the following exact sequence
	\begin{align*}
	0
	\longrightarrow
	S_{((n),\varnothing)}
	\xrightarrow{\:\gamma_0\:}
	S_{((n-1),(1))}
	\xrightarrow{\:\gamma_1\:}
	S_{((n-2),(1^2))}
	\xrightarrow{\:\gamma_2\:}
	\cdots
	\xrightarrow{\gamma_{n-1}}
	S_{(\varnothing,(1^n))}
	\longrightarrow
	0.
	\end{align*}}
\item{ If $\kappa_2\equiv{\kappa_1-1}\Mod{e}$, then the following sequence is exact
\[
	0
	\longrightarrow
	S_{((n-m,1^m),\varnothing)}
	\xrightarrow{\:\chi_m\:}
	S_{((n-m),(1^m))}
	\xrightarrow{\:\tau_m\:}
	S_{(\varnothing,(n-m+1,1^{m-1}))}
	\longrightarrow
	0.
	\]}
\item{If $\kappa_2\equiv{\kappa_1-1}\Mod{e}$ and $n\equiv{0}\Mod{e}$, then the following sequences are exact:
	\begin{enumerate}
	\item{
	$0
	\longrightarrow
	S_{((n),\varnothing)}
	\xrightarrow{\:\alpha_0\:}
	S_{((n-1,1),\varnothing)}
	\xrightarrow{\:\alpha_1\:}
	S_{((n-2,1^2),\varnothing)}
	\xrightarrow{\:\alpha_2\:}
	\cdots
	\xrightarrow{\alpha_{n-2}}
	S_{((1^n),\varnothing)}
	\longrightarrow
	0$;
	}
	\item{$
	0
	\longrightarrow
	S_{(\varnothing,(n))}
	\xrightarrow{\:\beta_0\:}
	S_{(\varnothing,(n-1,1))}
	\xrightarrow{\:\beta_1\:}
	S_{(\varnothing,(n-2,1^2))}
	\xrightarrow{\:\beta_2\:}
	\cdots
	\xrightarrow{\beta_{n-2}}
	S_{(\varnothing,(1^n))}
	\longrightarrow
	0.$}
	\end{enumerate}}
\end{enumerate}
\end{lem}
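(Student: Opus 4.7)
The plan is to deduce all three exact sequences directly from \cref{imker}, which already provides explicit spanning-set descriptions of the images and kernels of the relevant Specht module homomorphisms. Exactness will then reduce to matching these spanning sets at the interior terms and handling injectivity at the start and surjectivity at the end.

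For part 1, I would apply \cref{imker}(1)(a) with the index shifted by one: for $1 \leqslant m \leqslant n-1$, both $\im(\gamma_{m-1})$ and $\ker(\gamma_m)$ equal
\[
\spn\left\{v_{\ttt}\:|\:\ttt \in \std((n-m),(1^m)),\ \ttt(m,1,2) = n\right\},
\]
giving exactness at $S_{((n-m),(1^m))}$. Injectivity of $\gamma_0$ is immediate: $S_{((n),\varnothing)}$ is one-dimensional, spanned by $z_{((n),\varnothing)}$, and $\gamma_0(z_{((n),\varnothing)})=v(n)\ne 0$ by \cref{image}(1). Surjectivity of $\gamma_{n-1}$ follows because the single standard tableau of $(\varnothing,(1^n))$ has $n$ in its bottom node, and hence by \cref{imker}(1)(a) it spans $\im(\gamma_{n-1})$, which is therefore the whole of $S_{(\varnothing,(1^n))}$.

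Part 2 is essentially already done in \cref{imker}(2), which supplies $\ker(\chi_m)=0$ (injectivity), $\im(\tau_m)=S_{(\varnothing,(n-m+1,1^{m-1}))}$ (surjectivity), and the coincidence $\im(\chi_m)=\ker(\tau_m)=\spn\{v_{\ttt}\:|\:\ttt(1,1,1)=1\}$ at the middle term.

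Part 3 proceeds exactly as part 1, with one small intermediate step: \cref{imker}(3)(a),(b) gives the images of $\alpha_m$ and $\beta_m$ but not their kernels. Using \cref{image}(3)(a),(b), I would first observe that $\alpha_m$ (respectively $\beta_m$) sends each standard basis element $v_{\ttt}$ either to zero or to a standard basis element of the target, with distinct non-zero images being distinct basis elements; hence $\ker(\alpha_m)$ (resp.\ $\ker(\beta_m)$) is the span of those $v_{\ttt}$ with $\ttt(m+1,1,1)=n$ (resp.\ $\ttt(m+1,1,2)=n$). Matching these against the image descriptions in \cref{imker}(3) yields exactness at the interior terms, and the boundary cases are handled as in part 1. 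The main (and essentially only) obstacle is this small bookkeeping step identifying the kernels of $\alpha_m$ and $\beta_m$; everything else amounts to collating previously established facts.
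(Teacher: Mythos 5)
Your proposal is correct and follows the same route as the paper, which states the lemma as an ``immediate consequence'' of \cref{imker} without spelling out the details. For parts 1 and 2 the spanning-set descriptions from \cref{imker} match directly; your shift of index in part 1 and your one-dimensionality/surjectivity observations at the ends are exactly the needed bookkeeping. For part 3 you correctly note the small extra step: \cref{imker} gives only the images of $\alpha_m$ and $\beta_m$, but \cref{image}(3) shows each sends basis vectors to zero or to distinct basis vectors, so the kernels are read off as the spans of the $v_{\ttt}$ with $\ttt(m+1,1,1)=n$ (resp.\ $\ttt(m+1,1,2)=n$), which then agree with $\im(\alpha_{m-1})$ (resp.\ $\im(\beta_{m-1})$). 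Nothing is missing.
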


We thus obtain a commutative diagram of exact sequences of Specht module homomorphisms by applying \cref{lem:homcomps}.

\begin{lem}\label{lem:pic}
If $\kappa_2\equiv{\kappa_1-1}\Mod{e}$ and $n\equiv{0}\Mod{e}$, then the following diagram consists entirely of exact sequences where every square and every triangle commutes:
\[
\begin{tikzcd}
& 0 \arrow[d]
& 0 \arrow[d] \\
0 \arrow[r]
& S_{((n),\varnothing)} \arrow[r,"\chi_0"]
\arrow[d,"\alpha_0"]
\arrow[rd,"\phi_1"]
& S_{((n),\varnothing)} \arrow[r]
\arrow[d,"\gamma_0"]
& 0 \arrow[d] \\
0 \arrow[r]
& S_{((n-1,1),\varnothing)} \arrow[r,"\chi_1"]
\arrow[d,"\alpha_1"]
\arrow[rd,"\phi_2"]
& S_{((n-1),(1))} \arrow[r,"\tau_1"]
\arrow[d,"\gamma_1"]
& S_{(\varnothing,(n))} \arrow[r]
\arrow[d,"\beta_0"]
& 0 \\
0 \arrow[r]
& S_{((n-2,1^2),\varnothing)} \arrow[r,"\chi_2"]
\arrow[d,"\alpha_2"]
\arrow[rd,"\phi_3"]
& S_{((n-2),(1^2))} \arrow[r,"\tau_2"]
\arrow[d,"\gamma_2"]
& S_{(\varnothing,(n-1,1))} \arrow[r]
\arrow[d,"\beta_1"]
& 0 \\
& \vdots \arrow[d,"\alpha_{n-1}"]
& \quad\ \vdots\ \quad \arrow[d,"\gamma_{n-1}"]
& \vdots \arrow[d,"\beta_{n-2}"] \\
0 \arrow[r]
& S_{((2,1^{n-2}),\varnothing)} \arrow[r,"\chi_{n-2}"]
\arrow[d,"\alpha_{n-2}"]
\arrow[rd,"\phi_{n-1}"]
& S_{((2),(1^{n-2}))} \arrow[r,"\tau_{n-2}"]
\arrow[d,"\gamma_{n-2}"]
& S_{(\varnothing,(3,1^{n-3}))} \arrow[r]
\arrow[d,"\beta_{n-3}"]
& 0 \\
0 \arrow[r]
& S_{((1^n),\varnothing)} \arrow[r,"\chi_{n-1}"]
\arrow[d]
& S_{((1),(1^{n-1}))} \arrow[r,"\tau_{n-1}"]
\arrow[d,"\gamma_{n-1}"]
& S_{(\varnothing,(2,1^{n-2}))} \arrow[r]
\arrow[d,"\beta_{n-2}"]
& 0 \\
& 0 \arrow[r]
& S_{(\varnothing,(1^n))} \arrow[r,"\tau_n"]
\arrow[d]
& S_{(\varnothing,(1^n))} \arrow[r]
\arrow[d]
& 0 \\
&
& 0
& 0
\end{tikzcd}
\]
\end{lem}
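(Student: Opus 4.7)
The plan is to observe that every ingredient of the diagram has already been established in the preceding results, so the proof amounts to a careful assembly. Specifically, the three vertical short exact sequences (or more precisely, the three vertical ``ladders'' of exact sequences) are exactly what is proved in \cref{exact}: the middle column involving the maps $\gamma_m$ is part~(1), the left column with $\alpha_m$ is part~(3)(a), and the right column with $\beta_m$ is part~(3)(b). The horizontal short exact sequences, one for each $m$, are given by part~(2) of \cref{exact} applied to $m=1,\dots,n-1$.

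Next I would address the commutativity of each face. Each square of the form
\[
\begin{tikzcd}
S_{((n-m,1^m),\varnothing)} \arrow[r,"\chi_m"] \arrow[d,"\alpha_m"]
& S_{((n-m),(1^m))} \arrow[d,"\gamma_m"] \\
S_{((n-m-1,1^{m+1}),\varnothing)} \arrow[r,"\chi_{m+1}"]
& S_{((n-m-1),(1^{m+1}))}
\end{tikzcd}
\]
commutes by the identity $\gamma_m\circ\chi_m=\chi_{m+1}\circ\alpha_m$ established in \cref{lem:homcomps}(2), while each square
\[
\begin{tikzcd}
S_{((n-m),(1^m))} \arrow[r,"\tau_m"] \arrow[d,"\gamma_m"]
& S_{(\varnothing,(n-m+1,1^{m-1}))} \arrow[d,"\beta_{m-1}"] \\
S_{((n-m-1),(1^{m+1}))} \arrow[r,"\tau_{m+1}"]
& S_{(\varnothing,(n-m,1^m))}
\end{tikzcd}
\]
commutes by the identity $\beta_{m-1}\circ\tau_m=\tau_{m+1}\circ\gamma_m$ from \cref{lem:homcomps}(1). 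The commutativity of the two triangles hanging off each row in the left half of the diagram is precisely the second half of \cref{lem:homcomps}(2), namely $\phi_{m+1}=\gamma_m\circ\chi_m=\chi_{m+1}\circ\alpha_m$.

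The main obstacle, such as it is, lies in the degenerate top and bottom rows, where some of the homomorphisms are not defined in \cref{prop:homs} (which requires $1\leqslant m$ or $m\leqslant n-1$ as appropriate). For the top row I would interpret the map $\chi_0 \colon S_{((n),\varnothing)}\to S_{((n),\varnothing)}$ as the identity, since $((n-0,1^0),\varnothing)=((n),\varnothing)=((n),(1^0))$; similarly for the bottom row, the map $\tau_n \colon S_{(\varnothing,(1^n))}\to S_{(\varnothing,(1^n))}$ is the identity because $(\varnothing,(n-n+1,1^{n-1}))=(\varnothing,(1^n))$ and the generators have the same residue sequence by the proof of \cref{prop:homs}(2). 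With these conventions, the top and bottom rows are trivially short exact, and the outermost squares commute by inspection of the generators since $\chi_0$, $\tau_n$ act as the identity on $z_{((n),\varnothing)}$ and $z_{(\varnothing,(1^n))}$ respectively.

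Finally, I would remark that the exactness of the columns at their endpoints (i.e.\ the leftmost $0$'s and the bottom $0$'s appended in the diagram) is also subsumed by \cref{exact}, since the sequences there begin and end with $0$. Thus no further computation is required beyond citing \cref{exact} and \cref{lem:homcomps} and observing the boundary conventions above.
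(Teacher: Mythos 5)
Your proposal is correct and follows exactly the route the paper intends: the paper gives no explicit proof of \cref{lem:pic}, merely stating in the sentence preceding the lemma that the diagram is obtained by applying \cref{lem:homcomps} (together with the exact sequences already recorded in \cref{exact}). You have simply unpacked that terse reference --- identifying part~(1) of \cref{exact} for the middle column, parts~(3)(a) and (3)(b) for the outer columns, part~(2) for the rows, \cref{lem:homcomps}(1) for the right-hand squares, and \cref{lem:homcomps}(2) for the left-hand squares and the triangles --- and carefully handled the degenerate top and bottom rows where $\chi_0$ and $\tau_n$ collapse to the identity because $((n),(1^0))=((n),\varnothing)$ and $(\varnothing,(1,1^{n-1}))=(\varnothing,(1^n))$. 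This is a faithful and somewhat more explicit version of the paper's argument; no gap.
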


\section{Composition series of $S_{((n-m),(1^m))}$}\label{sec:compseries}

We now completely determine the composition factors of $S_{((n-m),(1^m))}$ for $\mathscr{H}_n^{\Lambda}$, up to isomorphism, with quantum characteristic at least three. In order to do so, we first provide a complete, explicit action of the $\mathscr{H}_n^{\Lambda}$-generators $\psi_1,\dots,\psi_{n-1}$ on the standard basis vectors of $S_{((n-m),(1^m))}$. We then use this $\mathscr{H}_n^{\Lambda}$-action to systematically show that we can map a standard basis vector $v_{\ttt}\in S_{((n-m),(1^m))}$, corresponding to an element $\psi_{w_{\ttt}}\in\mathscr{H}_n^{\Lambda}$ for a reduced expression of $w_{\ttt}\in\mathfrak{S}_n$, to another basis vector $v_{\tts}\in S_{((n-m),(1^m))}$ such that $\ttt\triangleright\tts$. This enables us to show that the quotients of the kernels and the images of the Specht module homomorphisms given in \cref{prop:homs} are, in fact, irreducible $\mathscr{H}_n^{\Lambda}$-modules, and hence arise as composition factors of $S_{((n-m),(1^m))}$.
We remark that the composition series of $S_{((n-m),(1^m))}$ split into four distinct cases, depending on whether $\kappa_2\equiv{\kappa_1-1}\Mod{e}$ or not and on whether $n\equiv{\kappa_2-\kappa_1+1}\Mod{e}$ or not.

\subsection{Further action of $\mathscr{H}_n^{\Lambda}$ on $S_{((n-m),(1^m))}$}\label{subsec:furtheraction}

In order to determine the irreducibility of $\mathscr{H}_n^{\Lambda}$-submodules of Specht modules labelled by hook bipartitions, we now establish results towards this end.

Each basis vector $v_{\ttt}$ of $S_{((n-m),(1^m))}$ equals $\psi_{w_{\ttt}}z_{\lambda}$ for a $\psi_{w_{\ttt}}\in\mathscr{H}_n^{\Lambda}$ and a reduced expression for $w_{\ttt}\in\mathfrak{S}_n$. We wish to determine the non-trivial mappings between these basis vectors by the generators $\psi_1,\dots,\psi_{n-1}\in\mathscr{H}_n^{\Lambda}$. Appealing to \cref{psigeneratorlem} and \cref{ygeneratorlem}, we explicitly describe the action of these generators on the basis vectors of $S_{((n-m),(1^m))}$, which act non-trivially only in a small number of cases.

\begin{thm}\label{rel}
Let $1\leqslant{l}\leqslant{n-1}$, $\ttt\in\std\left((n-m),(1^m)\right)$, and for $1\leqslant{r}\leqslant{m}$, set $a_r:=\ttt(r,1,2)$. Then $\psi_lv(a_1,\dots,a_m)=0$ except in the following cases.
\begin{enumerate}[(i)]
\item { Suppose that $a_r=l$ for some $1\leqslant{r}\leqslant{m}$, and that either $r=m$ or $a_{r+1}\geqslant{l+2}$. Then
	\begin{align} \label{rel1}	
	\psi_l v(a_1,\dots,a_m)
	= v(a_1,\dots,a_{r-1},l+1,a_{r+1},\dots,a_m).
	\end{align} }
\item { Suppose that $l\equiv{\kappa_2-\kappa_1}\Mod{e}$ and $l<n-1$.
	\begin{itemize}
	\item { Suppose $a_r=l+1$ and $a_{r+1}=l+2$ for some $1\leqslant{r}\leqslant{n-1}$, and that either $r=1$ or $a_{r-1}\leqslant{l-1}$. Then
		\begin{align}\label{rel2}
		\psi_lv(a_1,\dots,a_m)
		= v(a_1,\dots,a_{r-1},l,l+1,a_{r+2},\dots,a_m).
		\end{align}	}
	\item { Suppose $a_r=l+2$ for some $1\leqslant{r}\leqslant{m}$, and that either $r=1$ or $a_{r-1}\leqslant{l-1}$. Then
		\begin{align}\label{rel3}
		 \psi_l v(a_1,\dots,a_m)
		= -v(a_1,\dots,a_{r-1},l,a_{r+1},\dots,a_m).
		\end{align}	 }
	\end{itemize}	}
\item{ Suppose that $l\equiv{2+\kappa_2-\kappa_1}\Mod{e}$.
	\begin{itemize}
	\item { Suppose $a_r=l$ and $a_{r+1}=l+1$ for some $1\leqslant{r}\leqslant{m-1}$, and that either $r=1$ or $a_{r-1}\leqslant{l-2}$. Then
		\begin{align}\label{rel4}
		\psi_lv(a_1,\dots,a_m)
		= v(a_1,\dots,a_{r-1},l-1,l,a_{r+2},\dots,a_m).
		\end{align} }
	\item{ Suppose $a_r=l+1$ for some $1\leqslant{r}\leqslant{m}$, and that either $r=1$ or $a_{r-1}\leqslant{l-2}$. Then
		\begin{align}\label{rel5}
		\psi_l
		 v(a_1,\dots,a_m)
		= -v(a_1,\dots,a_{r-1},l-1,a_{r+1},\dots,a_m).	
		\end{align} }
	\end{itemize} }
\item { Suppose that $l+\kappa_1-\kappa_2\not\equiv{0,1,2}\Mod{e}$, $a_r=l+1$ for some $1\leqslant{r}\leqslant{m}$, and either $r=1$ or $a_{r-1}\leqslant{l-1}$. Then
	\begin{align}\label{rel6}
	 \psi_l v(a_1,\dots,a_m)
	= v(a_1,\dots,a_{r-1},l,a_{r+1},\dots,a_m).
	\end{align} }
\end{enumerate}
\end{thm}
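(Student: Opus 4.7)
The plan is to prove \cref{rel} by simultaneous induction on $\sum_{i=1}^m a_i$, handling every listed sub-case \cref{rel1,rel2,rel3,rel4,rel5,rel6} together with the implicit vanishing requirement for all configurations not listed in (i)--(iv) (some of which lie outside the scope of \cref{psigeneratorlem}). The base case $\ttt = \ttt_\lambda$ is immediate from the Specht module presentation in \cref{cor:spechtpres}. The inductive step follows the same template used in the proofs of \cref{psigeneratorlem} and \cref{ygeneratorlem}: commute $\psi_l$ leftward through the initial factors of $\psi_{w_\ttt} = \Psid{a_1-1}{1}\cdots\Psid{a_m-1}{m}$, apply a local KLR relation at the resulting meeting point, insert the idempotent of the relevant intermediate tableau, and reduce what remains using the inductive hypothesis together with \cref{psigeneratorlem} and \cref{ygeneratorlem}.

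Fix $l$ and let $r$ be the smallest index with $a_r \geq l$. In every listed sub-case the hypothesis forces $a_{r-1} \leq l-1$ (or $l-2$), so by \cref{psicomm} $\psi_l$ commutes past $\Psid{a_i-1}{i}$ for all $i < r$. Case (i), with $a_r = l$ and $a_{r+1} \geq l+2$, is then immediate: $\psi_l\Psid{l-1}{r} = \Psid{l}{r}$ is the $r$-th factor in the preferred reduced expression for $v(a_1,\ldots,a_{r-1},l+1,a_{r+1},\ldots,a_m)$, giving \cref{rel1}. For case (iv), where $a_r = l+1$ and $l + \kappa_1 - \kappa_2 \not\equiv 0,1,2 \Mod{e}$, one rewrites $\psi_l\Psid{l}{r} = \psi_l^2\Psid{l-1}{r}$ and inserts the idempotent of the intermediate tableau $\tts = v(a_1,\ldots,a_{r-1},l,a_{r+1},\ldots,a_m)$. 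A direct computation shows that the residues of positions $l$ and $l+1$ in $\tts$ are $\kappa_2+1-r$ and $\kappa_1+l-r$ respectively, so their difference $l+\kappa_1-\kappa_2-1$ avoids all three coincidence regimes of \cref{psipsi}, yielding $\psi_l^2 e(\mathbf{i}_\tts) = e(\mathbf{i}_\tts)$ and hence \cref{rel6}.

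The residue-coincidence cases (ii) and (iii) follow the same commutation-and-idempotent-insertion strategy. The second sub-cases of (ii) and (iii) land in the regimes $\psi_l^2 e(\mathbf{i}_\tts) = \pm(y_l - y_{l+1})e(\mathbf{i}_\tts)$ of \cref{psipsi}; one then reduces the resulting $y$-terms via \cref{ygeneratorlem}, where $y_{l+1}v_\tts$ vanishes because the integer $l+1$ satisfies case $(3)$ of that lemma for $e \geq 3$, and the surviving $\mp y_l v_\tts$ reduces, as in the manipulation at the end of the proof of \cref{ygeneratorlem}, to $\pm v(a_1, \ldots, a_{r-1}, l\mp 1, a_{r+1}, \ldots, a_m)$, yielding \cref{rel3} and \cref{rel5}. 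The first sub-cases \cref{rel2} and \cref{rel4} feature two consecutive leg entries and require a preliminary application of the braid relation \cref{psibraid} to absorb $\psi_l$ into the combined block $\Psid{l}{r}\Psid{l+1}{r+1}$; the resulting $\pm 1$ correction terms must then be reconciled inductively. The unlisted vanishing configurations not covered by \cref{psigeneratorlem} (for instance, $a_r = l+1$ with $a_{r-1} = l-1$ and $l \equiv 2+\kappa_2-\kappa_1\Mod{e}$) are dispatched analogously: $\psi_l^2 e(\mathbf{i}_\ttu)$ becomes $\pm(y_l - y_{l+1})e(\mathbf{i}_\ttu)$ on the appropriate intermediate $\ttu$, and both $y$-terms then vanish by \cref{ygeneratorlem}.

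The main obstacle will be the bookkeeping of correction terms arising from repeated use of \cref{psibraid} and \cref{psipsi} in the double-swap cases \cref{rel2} and \cref{rel4}, especially when $e = 3$, where \cref{psibraid} introduces an extra $\pm 1$ that must be carried through several further commutations before being absorbed by the presentation in \cref{cor:spechtpres}. This mirrors the $e = 3$ branch appearing in the proof of \cref{psigeneratorlem}, and will occupy the bulk of the argument.
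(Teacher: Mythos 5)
Your overall blueprint — commute $\psi_l$ past the initial factors of $\psi_{w_\ttt}$, insert the idempotent of the intermediate tableau $\tts$ obtained by replacing $a_r$ with a smaller value, apply a local KLR relation, and clean up the residue with \cref{psigeneratorlem} and \cref{ygeneratorlem} — is the same strategy the paper uses, and your treatments of (i), (iv), and the second bullet of (iii) (yielding \cref{rel5}) are correct. However, there is a concrete error in your treatment of the second bullet of (ii), formula \cref{rel3}. There $a_r = l+2$, so the $r$-th factor of $\psi_{w_\ttt}$ is $\Psid{l+1}{r} = \psi_{l+1}\psi_l\cdots\psi_r$; when the commuted $\psi_l$ meets this block you encounter $\psi_l\psi_{l+1}\psi_l$, a braid word, not $\psi_l^2$. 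Equivalently, $s_lw_\ttt$ is \emph{longer} than $w_\ttt$ here (both $l$ and $l+1$ sit in the arm, with $l$ to the left), so there is no reduced expression for $w_\ttt$ beginning with $s_l$ and hence no way to produce $\psi_l^2$. Thus your claim that this sub-case lands in the $\psi_l^2 e(\mathbf{i}_\tts) = \pm(y_l-y_{l+1})e(\mathbf{i}_\tts)$ regime of \cref{psipsi} is false: the relevant local relation is the braid relation \cref{psibraid}, whose $-e(\mathbf{i}_\tts)$ correction supplies $-v(a_1,\dots,a_{r-1},l,a_{r+1},\dots,a_m)$, while the $\psi_{l+1}\psi_l\psi_{l+1}e(\mathbf{i}_\tts)$ term dies by part (2) of \cref{psigeneratorlem}. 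Your supporting claim that ``$y_{l+1}v_\tts$ vanishes because $l+1$ satisfies case (3) of \cref{ygeneratorlem}'' also fails in case (ii): with $l\equiv\kappa_2-\kappa_1\Mod{e}$ one has $l+1\equiv 1+\kappa_2-\kappa_1\Mod{e}$, which lies in case (1) of that lemma, not case (3).

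A secondary imprecision: you say both \cref{rel2} and \cref{rel4} feature the block $\Psid{l}{r}\Psid{l+1}{r+1}$, but for \cref{rel4} (where $a_r=l$, $a_{r+1}=l+1$) the block is $\Psid{l-1}{r}\Psid{l}{r+1}$; and the promised ``inductive reconciliation'' of the braid correction terms is unnecessary, since those terms are disposed of directly by \cref{psigeneratorlem} rather than by recursion on this theorem. Indeed the paper's proof of \cref{rel} is not an induction at all; it is a direct case analysis resting on \cref{psigeneratorlem} and \cref{ygeneratorlem}, which are themselves proved by induction on $\sum a_i$, so the induction you set up is superfluous.
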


\begin{proof}
We consider $\psi_lv(a_1,\dots,a_m)$ for all $a_r\geqslant{l}$.
\begin{enumerate}
\item {We let $a_r=l$ and suppose $a_{r+1}\geqslant{l+2}$. Then, by using \cref{psicomm}, we have
\begin{align*}
	\psi_lv(a_1,\dots,a_m)
	= & \Psid{a_1-1}{1}\dots\Psid{a_{r-1}-1}{r-1}
	\Psid{l}{r}\Psid{a_{r+1}-1}{r+1}
	\dots\Psid{a_m-1}{m} z_{((n-m),(1^m))} \\
	= & v(a_1,\dots,a_{r-1},l+1,a_{r+1},\dots,a_m),
\end{align*}
which satisfies \cref{rel1}. }

\item {Suppose $a_{r-1}+1\leqslant{l}\leqslant{a_r-3}$. Then, by using \cref{psicomm}, we have
\begin{align*}
	\psi_lv(a_1,\dots,a_m)
	& = \Psid{a_1-1}{1}\dots\Psid{a_{r-1}-1}{r-1}
	\Psid{a_r-1}{l+2}\psi_l\Psid{l+1}{r}
	\Psid{a_{r+1}-1}{r+1}\dots\Psid{a_m-1}{m} z_{((n-m),(1^m))} \\
	& = \Psid{a_1-1}{1}\dots\Psid{a_{r-1}-1}{r-1}\Psid{a_r-1}{l+2}\psi_lv(1,\dots,r-1,l+2,a_{r+1},\dots,a_m).
\end{align*}
By part two of \cref{psigeneratorlem}, $\psi_lv(1,\dots,r-1,l+2,a_{r+1},\dots,a_m)$ equals zero if $l\not\equiv{\kappa_2-\kappa_1}\Mod{e}$. Suppose instead $l\equiv{\kappa_2-\kappa_1}\Mod{e}$. Then, by using \cref{psicomm} and \cref{psibraid}, $\psi_lv(a_1,\dots,a_m)$ becomes
\begin{align*}
	&\Psid{a_1-1}{1}\dots\Psid{a_{r-1}-1}{r-1} \Psid{a_r-1}{l+2}
	\psi_{l}\psi_{l+1}\psi_{l}\Psid{l-1}{r}
	\Psid{a_{r+1}-1}{r+1}\dots\Psid{a_m-1}{m} z_{((n-m),(1^m))} \\
	&= \Psid{a_1-1}{1}\dots\Psid{a_{r-1}-1}{r-1} 
	\Psid{a_r-1}{l+2}(\psi_{l+1}\psi_{l}\psi_{l+1}-1)\Psid{l-1}{r}
	\Psid{a_{r+1}-1}{r+1}\dots\Psid{a_m-1}{m} z_{((n-m),(1^m))} \\
	&= \Psid{a_1-1}{1}\dots\Psid{a_{r-1}-1}{r-1} 
	\Psid{a_r-1}{r}\Psid{a_{r+1}-1}{l+3}\psi_{l+1}
	v(1,\dots,r,l+3,a_{r+2},\dots,a_m)\\
	& \quad - \Psid{a_1-1}{1}\dots\Psid{a_{r-1}-1}{r-1}
	 \Psid{a_r-1}{l+3}\Psid{l-1}{r}\Psid{a_{r+1}-1}{l+4}
	\psi_{l+2}v(1,\dots,r,l+4,a_{r+2},\dots,a_m).
\end{align*}
By part two of \cref{psigeneratorlem}, both $\psi_{l+1}v(1,\dots,r,l+3,a_{r+2},\dots,a_m)$ and 
$\psi_{l+2}v(1,\dots,r,l+4,a_{r+2},\dots,a_m)$ equal zero. }
	
\item {Let $a_r=l+2$ and suppose that $a_{r-1}\leqslant{l-1}$.
\begin{enumerate}[(i)]
	\item {Suppose $l\not\equiv{\kappa_2-\kappa_1}\Mod{e}$. By repeatedly applying \cref{psigeneratorlem} whilst employing the Khovanov--Lauda--Rouquier algebra and Specht module presentations, we find that $\psi_lv(a_1,\dots,a_m)=0$. }

	\item {Suppose $l\equiv{\kappa_2-\kappa_1}\Mod{e}$.	
	Then, by using \cref{psicomm} and \cref{psibraid}, we have
	\begin{align*}
		\psi_{l}v(a_1,\dots,a_m) 
		& = \Psid{a_1-1}{1}\dots\Psid{a_{r-1}-1}{r-1} 
		(\psi_{l}\psi_{l+1}\psi_{l})\Psid{l-1}{r}
		\Psid{a_{r+1}-1}{r+1}\dots\Psid{a_m-1}{m} z_{((n-m),(1^m))} \\
		& = \Psid{a_1-1}{1}\dots\Psid{a_{r-1}-1}{r-1} 
		(\psi_{l+1}\psi_{l}\psi_{l+1}-1)\Psid{l-1}{r}
		\Psid{a_{r+1}-1}{r+1}\dots\Psid{a_m-1}{m} z_{((n-m),(1^m))}\\
		& = \Psid{a_1-1}{1}\dots\Psid{a_{r-1}-1}{r-1} 
		\psi_{l+1}\psi_{l}\psi_{l+1}v(1,\dots,r-1,l,a_{r+1},\dots,a_m)\\
		&\ - v(a_1,\dots,a_{r-1},l,a_{r+1},\dots,a_m),
	\end{align*}
	
	where $\psi_{l+1}v(1,\dots,r-1,l,a_{r+1},\dots,a_m)$ equals zero by part two of 
	\cref{psigeneratorlem}, whilst the second term is clearly non-zero, and thus satisfies \cref{rel3}. }

\end{enumerate} }

	\item {\begin{enumerate}
			\item {Let $a_r=l+1$ and suppose that $a_{r-1}\leqslant{l-1}$.
	\begin{enumerate}[(i)]
		\item {Suppose $l\equiv{\kappa_2-\kappa_1}\Mod{e}$. If $l<n-1$ and $a_{r+1}\geqslant{l+3}$ or $l=n-1$, then by applying \cref{psigeneratorlem} and \cref{ygeneratorlem} whilst employing the Khovanov--Lauda--Rouquier algebra and Specht module presentations, we find that $\psi_lv(a_1,\dots,a_m)=0$. 

		If we now let $a_{r+1}=l+2$, then by applying \cref{ygeneratorlem} and \cref{psicomm}, we have that

		\begin{align*}
		\psi_lv(a_1,\dots,a_m)
			&=\Psid{a_1-1}{1}\dots\Psid{a_{r-1}-1}{r-1}
			 \Psid{l-1}{r}\Psid{l}{r+1}
			\Psid{a_{r+2}-1}{r+2}\dots\Psid{a_m-1}{m} z_{((n-m),(1^m))} \\
			&= v(a_1,\dots,a_{r-1},l,l+1,a_{r+2},\dots,a_m),
		\end{align*}
		which satisfies \cref{rel2}. }

		\item {Suppose $l\equiv{1+\kappa_2-\kappa_1}\Mod{e}$.
		Then, by using \cref{psipsi}, we have
		\begin{align*}
			\psi_{l}v(a_1,\dots,a_m)
			& = \Psid{a_1-1}{1}\dots 
			\psi_{l}^2\Psid{l-1}{r}\dots
			\Psid{a_m-1}{m}	z_{((n-m),(1^m))}=0.
		\end{align*} }

		\item {Suppose $l\equiv{2+\kappa_2-\kappa_1}\Mod{e}$. Then by applying \cref{ygeneratorlem}, we have
		
		\begin{align*}
			\psi_lv(a_1,\dots,a_m)
			&= - \Psid{a_1-1}{1}\dots\Psid{a_{r-1}-1}{r-1}
			 \Psid{l-2}{r}\Psid{a_{r+1}-1}{r+1}
			\dots\Psid{a_m-1}{m} z_{((n-m),(1^m))}\\
			&= - v(a_1,\dots,a_{r-1},l-1,a_{r+1},\dots,a_m),
		\end{align*}
		
		which is clearly non-zero and satisfies \cref{rel5} if $a_{r-1}\leqslant{l-2}$. However, if $r>1$
		and $a_{r-1}=l-1$, then the term becomes zero by applying \cref{psigeneratorlem}. }

		\item {Suppose $l+\kappa_1-\kappa_2\not\equiv{0,1,2}\Mod{e}$.	
		Then, by using \cref{psicomm} and \cref{psipsi}, we have
		\begin{align*}
			\psi_{l}v(a_1,\dots,a_m) 
			& = 
			\Psid{a_1-1}{1}\dots\Psid{a_{r-1}-1}{r-1} 
			\psi_{l}^2\Psid{l-1}{r}\Psid{a_{r+1}-1}{r+1}
			\dots\Psid{a_m-1}{m} z_{((n-m),(1^m))} \\
			& = \Psid{a_1-1}{1}\dots\Psid{a_{r-1}-1}{r-1}
			 \Psid{l-1}{r}\Psid{a_{r+1}-1}{r+1}
			\dots\Psid{a_m-1}{m} z_{((n-m),(1^m))} \\
			& = v(a_1,\dots,a_{r-1},l,a_{r+1},\dots,a_m),
		\end{align*}
		which satisfies \cref{rel6}. }
	\end{enumerate}} 
	
	\item{Suppose $a_r=a_{r-1}+1$.
	Firstly, suppose $l\not\equiv{2+\kappa_2-\kappa_1}\Mod{e}$. Then, by using \cref{psicomm}, we have
	\begin{align*}
		\psi_{l}v(a_1,\dots,a_m) 
		& = \Psid{a_1-1}{1}\dots\Psid{a_{r-1}-1}{r-1}
		\psi_{l}v(1,\dots,r-1,l,l+1,a_{r+2},\dots,a_m),
	\end{align*}
	where $\psi_{l}v(1,\dots,r-1,l,l+1,a_{r+2},\dots,a_m)$ equals zero by part one of 
	\cref{psigeneratorlem}.
	
	Now suppose that $l\equiv{2+\kappa_2-\kappa_1}\Mod{e}$. Then by applying \cref{psigeneratorlem}, we have that

	\begin{align*}
		 \psi_l v(a_1,\dots,a_m)
		& = \Psid{a_1-1}{1}\dots\Psid{a_{r-1}-1}{r-1}
		\Psid{l-2}{r}\Psid{l-1}{r+1}\Psid{a_{r+2}-1}{r+2}
		\dots\Psid{a_m-1}{m} z_{((n-m),(1^m))}\\
		&= v(a_1,\dots,a_{r-1},l-1,l,a_{r+2},\dots,a_m),
	\end{align*}
	
	which is clearly non-zero and hence satisfying \cref{rel4} if $r=1$ or $r>1$ and $a_{r-1}\leqslant{l-2}$. However, if $r>1$ and $a_{r-1}=l-1$, then this term becomes zero by applying \cref{psigeneratorlem}. }

\end{enumerate}}

\item{ Suppose $a_r=l$ and $a_{r+1}\geqslant{l+3}$. Then $\psi_lv(a_1,\dots,a_m)$ clearly satisfies \cref{rel1}.
	
	Now suppose $a_{r+1}={l+2}$. Then, by using \cref{psicomm}, we have
	\begin{align*}
		\psi_l v(a_1,\dots,a_m)
		& = \Psid{a_1-1}{1}\dots\Psid{a_{r-1}-1}{r-1}
		 \psi_l
		v(1,\dots,r-1,l,l+1,a_{r+2},\dots,a_m).
	\end{align*}
	By part one of \cref{psigeneratorlem}, $\psi_lv(1,\dots,r-1,l,l+1,a_{r+2},\dots,a_m)$ equals zero if 
	$l\not\equiv{2+\kappa_2-\kappa_1}\Mod{e}$. Suppose instead that $l\equiv{2+\kappa_2-\kappa_1}\Mod{e}$. Then by applying \cref{psigeneratorlem}, $\psi_lv(a_1,\dots,a_m)$ becomes
	
	\begin{align*}
		& \Psid{a_1-1}{1}\dots\Psid{a_{r-1}-1}{r-1}
		 \Psid{l-2}{r}\Psid{l-1}{r+1}
		\Psid{a_{r+2}-1}{r+2}\dots\Psid{a_m-1}{m} z_{((n-m),(1^m))} \\
		& = v(a_1,\dots,a_{r-1},l-1,l,a_{r+2},\dots,a_m),
	\end{align*}
	
	which is clearly non-zero if $a_{r-1}\leqslant{l-2}$, and thus satisfies \cref{rel4}. However, this term becomes zero by applying \cref{psigeneratorlem} if $a_{r-1}=l-1$.	}
\qedhere
\end{enumerate}
\end{proof}

\begin{cor}\label{cor:matrix}
For all $l\in\{1,\dots,n-1\}$, the matrix of the action of $\psi_l$ on $S_{((n-m),(1^m))}$ with respect to our chosen standard basis has at most one non-zero entry in each row and in each column.
\end{cor}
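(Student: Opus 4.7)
The plan is to derive the corollary from the case-by-case description in Theorem \ref{rel}. The row condition is immediate: for any basis vector $v(a_1,\dots,a_m)$, Theorem \ref{rel} gives $\psi_l v(a_1,\dots,a_m)$ as either $0$ or $\pm v(b_1,\dots,b_m)$ for a single standard basis vector, so the row indexed by $v(a_1,\dots,a_m)$ has at most one non-zero entry.

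For the column condition, the task is to show that the partial map $(a_1,\dots,a_m)\mapsto(b_1,\dots,b_m)$ induced by Theorem \ref{rel} is injective on its domain. Within a single rule \eqref{rel1}--\eqref{rel6}, the transformation is a local substitution at one or two positions, the remaining entries being preserved. The output uniquely determines the input by the obvious reverse substitution (for \eqref{rel1}, replace $l+1$ at position $r$ by $l$; for \eqref{rel2}, replace the consecutive pair $l,l+1$ by $l+1,l+2$; and so on). Hence each rule, viewed as a partial function on tuples, is injective.

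The more delicate point is that distinct rules produce distinct outputs. I would exploit two features. First, rules \eqref{rel2}--\eqref{rel6} carry mutually exclusive congruence conditions on $l\bmod e$, which are genuinely exclusive because $e\geq 3$: both \eqref{rel2} and \eqref{rel3} demand $l\equiv\kappa_2-\kappa_1\Mod{e}$; both \eqref{rel4} and \eqref{rel5} demand $l\equiv\kappa_2-\kappa_1+2\Mod{e}$; and \eqref{rel6} forbids each of the three consecutive residues $\kappa_2-\kappa_1,\kappa_2-\kappa_1+1,\kappa_2-\kappa_1+2$. Thus for a fixed $l$ at most one of these residue classes is active. Within the same residue class, \eqref{rel2} is separated from \eqref{rel3} by whether $l+1$ appears in the output (yes in \eqref{rel2}, no in \eqref{rel3}), and analogously \eqref{rel4} is separated from \eqref{rel5} by whether $l$ appears.

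It remains to check that the residue-free rule \eqref{rel1} cannot collide with any of \eqref{rel2}--\eqref{rel6}. Since the input is strictly increasing, the output of \eqref{rel1} always contains $l+1$ at position $r$ and, by the constraints $a_{r-1}\leqslant l-1$ and $a_{r+1}\geqslant l+2$, never contains $l$. By contrast, the outputs of \eqref{rel3}, \eqref{rel4}, \eqref{rel5}, \eqref{rel6} never contain $l+1$, and the output of \eqref{rel2} contains $l$ at position $r$ in addition to $l+1$ at $r+1$. Hence no collision is possible between rules, and combined with the within-rule injectivity this gives the column condition. The main obstacle is purely bookkeeping — verifying the six-by-six non-collision table — but the separation by residue class together with membership of $\{l-1,l,l+1\}$ in the output tuple reduces the verification to a handful of one-line checks.
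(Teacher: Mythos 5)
Your argument is correct and is exactly the kind of verification that the paper leaves implicit: the corollary is stated without proof immediately after \cref{rel}, and the expected justification is precisely that (a) each basis vector is sent to at most a single (scalar multiple of a) basis vector, which gives one direction, and (b) the induced partial map on index tuples is injective, which is your case check. Your within-rule injectivity observation and your use of $\{l-1,l,l+1\}$-membership plus the residue split to rule out cross-rule collisions (noting $e\geqslant 3$ keeps $\kappa_2-\kappa_1$ and $\kappa_2-\kappa_1+2$ distinct modulo $e$) match the natural reading of \cref{rel}, so the proposal is the paper's argument spelled out in full.
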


Ultimately, when $S_{((n-m),(1^m))}$ is irreducible, we will show that we can map an arbitrary element of $S_{((n-m),(1^m))}$ under the action of $\mathscr{H}_n^{\Lambda}$ to the standard generator $z_{((n-m),(1^m))}$. If $S_{((n-m),(1^m))}$ is not irreducible, then our following results aid us to map an arbitrary element of a composition factor $M$ to a single basis vector $v_{\ttt}$, where $\ttt$ is the least dominant $((n-m),(1^m))$-tableau labelling any basis vector of $M$.

We now define the set of all products of the Khovanov--Lauda--Rouquier algebra generators $\psi_1,\dots,\psi_{n-1}$, up to scalar, to be
\[
\scaleobj{1.2}{\Psi}:=\left\{\alpha\psi_{r_1}\psi_{r_2}\dots\psi_{r_k}\ |\ 1\leqslant r_i <n,1\leqslant i\leqslant k\in\mathbb{N},\alpha\in\mathbb{R} \right\}.
\]
We note that for an arbitrary element $\alpha\psi_{r_1}\dots\psi_{r_k}\in\scaleobj{1.2}{\Psi}$, we do not assume that the associated expression $s_{r_1}\dots s_{r_k}\in\mathfrak{S}_n$ is reduced. For $\tts,\ttt\in\std((n-m),(1^m))$, we explicitly map each standard basis vector $v_{\ttt}$ of $S_{((n-m),(1^m))}$ to another standard basis vector $v_{\tts}$ by an element $x\in\scaleobj{1.2}{\Psi}$, where $\tts$ is less dominant than $\ttt$, that is, $\tts\triangleleft\ttt$.

\begin{prop}\label{irr}
Suppose that $a_i>i$ for some $i\in\{1,\dots,m\}$.
Then there exists an element $x\in\scaleobj{1.2}{\Psi}$ such that $xv(1,\dots,i-1,a_i,a_{i+1},\dots,a_m)=v(1,\dots,i-1,i,a_{i+1},\dots,a_m)$, where $x$ is given as follows.
\begin{enumerate}
\item{Let $i\equiv{1+\kappa_2-\kappa_1}\Mod{e}$.
	\begin{enumerate}
	\item{If $a_i\equiv{1+\kappa_2-\kappa_1}\Mod{e}$,
	 then \[x=
		\begin{cases}
		-\psi_{a_i}\Psiu{i+1}{a_i-1}
			&\text{if }a_{i+1}=a_i+1,\\
		\Psiu{i+1}{a_i}
			&\text{if $a_{i+1}\geqslant{a_i+2}$ or ($i=m$ and $a_m<n$)}.
		\end{cases}\]}
	\item{If $a_i\equiv{2+\kappa_2-\kappa_1}\Mod{e}$ and
		\begin{enumerate}
		\item{$a_i=i+1$,
	 	then $x=\begin{cases}
	 	-\psi_{i+1}^2
	 	&\text{if $a_{i+1}=i+2$;}\\
	 	\psi_{i+1}^2
	 	&\text{if $a_{i+1}> i+2$ or ($i=m$ and $a_m<n$),}
	 	\end{cases}$}
		\item{$a_i>{i+1}$,
	 	then $x=\Psiu{i+1}{a_i-2}$.}
		\end{enumerate}	}
	\item{If $a_i+\kappa_1-\kappa_2\not\equiv{1,2}\Mod{e}$,
	then $x=\Psiu{i+1}{a_i-1}$.}
	\end{enumerate}}
\item{Let $i\equiv{2+\kappa_2-\kappa_1}\Mod{e}$ and suppose that $i\neq{1}$ when $\kappa_2\equiv{\kappa_1-1}\Mod{e}$.
	\begin{enumerate}
	\item{If $a_i\equiv{1+\kappa_2-\kappa_1}\Mod{e}$,
	 then 
	 \[x=\begin{cases}
	 	\psi_{a_i}\Psid{i}{i-1}\Psiu{i+1}{a_i-1}
	 		&\text{if }a_{i+1}=a_i+1,\\
	 	-\Psid{i}{i-1}\Psiu{i+1}{a_i}
	 		&\text{if $a_{i+1}\geqslant{a_i+2}$ or ($i=m$ and $a_m<n$)}.
	 	\end{cases}\]}
	\item{If $a_i\equiv{2+\kappa_2-\kappa_1}\Mod{e}$,
	 then $x=-\Psid{i}{i-1}\Psiu{i+1}{a_i-2}$;}
	\item{If $a_i+\kappa_1-\kappa_2\not\equiv{1,2}\Mod{e}$,
	 then $x=-\Psid{i}{i-1}\Psiu{i+1}{a_i-1}$.}
	\end{enumerate}}
\item{Let $i+\kappa_1-\kappa_2\not\equiv{1,2}\Mod{e}$.
	\begin{enumerate}
	\item{If $a_i\equiv{1+\kappa_2-\kappa_1}\Mod{e}$,
	 then \[x=
		\begin{cases}	
		-\psi_{a_i}\Psiu{i}{a_i-1}
			&\text{if }a_{i+1}=a_i+1,\\ 
	 	\Psiu{i}{a_i}
	 		&\text{if $a_{i+1}\geqslant{a_i+2}$ or ($i=m$ and $a_m<n$)}.
		\end{cases}\]}
	\item{If $a_i\equiv{2+\kappa_2-\kappa_1}\Mod{e}$,
	 then $x=\Psiu{i}{a_i-2}$;}
	\item{If $a_i+\kappa_1-\kappa_2\not\equiv{1,2}\Mod{e}$,
	 then $x=\Psiu{i}{a_i-1}$.}
	\end{enumerate}}
\end{enumerate}
\end{prop}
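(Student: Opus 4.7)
The plan is to prove the proposition by direct verification. In each of the nine cases, the element $x$ is an explicit product of generators $\psi_j$, and I would simply compute $x\cdot v(1,\dots,i-1,a_i,a_{i+1},\dots,a_m)$ one factor at a time, applying the appropriate formula from Theorem~\ref{rel} at every step. Each application of a single $\psi_j$ triggers exactly one of the relations (rel1)--(rel6), determined by the residue of $j$ modulo $e$ relative to $\kappa_2-\kappa_1$ and by the current value of the entry at position $i$. The cumulative effect is to propagate the entry at position $i$ downward from $a_i$ to $i$, with the overall sign absorbed into the scalar prefactor of $x$.

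The engine of the calculation is a cancellation pattern that handles the ``bad'' intermediate residues. When $j+\kappa_1-\kappa_2$ is generic, i.e.\ not in $\{0,1,2\}$ modulo $e$, the factor $\psi_j$ acts via (rel6) and lowers the entry by one. When $j+\kappa_1-\kappa_2\equiv 2\Mod{e}$, (rel5) instead jumps the entry down by two with a minus sign; the next factor $\psi_{j-1}$, for which the $\psi^2$-relation would ordinarily kill the vector (since $(j-1)+\kappa_1-\kappa_2\equiv 1$), is now applied to a vector whose entry at position $i$ equals $j-1$, so (rel1) fires and returns the entry to $j$; and then $\psi_{j-2}$ meets the entry at $j$ with residue $\equiv 0$ and fires (rel3), jumping the entry down to $j-2$ with another minus sign. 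The net effect of these three factors is a decrease of three with total sign $+1$, matching three generic rel6 applications, so the chain threads past all three consecutive ``bad'' residues automatically.

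The nine cases then correspond to how this chain is adjusted at its two endpoints. At the top end, the residue of $a_i$ controls the first step: if $a_i\equiv 1+\kappa_2-\kappa_1\Mod{e}$ a direct start with $\psi_{a_i-1}$ would annihilate the vector, so $x$ instead begins with a factor $\psi_{a_i}$ that first raises the entry to $a_i+1$ before rel5/rel4 brings it back down, and the subcases $a_{i+1}=a_i+1$ versus $a_{i+1}\geqslant a_i+2$ reflect the dichotomy between (rel4)/(rel2) and (rel1) being available at this first step. At the bottom end, the residue of $i$ determines how $x$ terminates: at $\psi_i$ in the fully generic case 3(c), after a short detour $\Psid{i}{i-1}$ in case 2 (where (rel4) performs the final descent from $i+1$ to $i$ by simultaneously moving the entries at positions $i-1$ and $i$), or already at $\psi_{i+1}$ in case 1 (where $\psi_i$ would annihilate the vector, and (rel5) accomplishes the final jump from $i+2$ to $i$ directly).

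The main obstacle will be the bookkeeping demanded by a large case analysis: nine principal cases, several with two subcases, and in each we must verify both that no intermediate application of $\psi_j$ annihilates the vector and that the sign accumulated through the chain matches the prefactor of $x$. I would begin by treating the fully generic case 3(c) as a template, proving $\Psiu{i}{a_i-1}v(1,\dots,i-1,a_i,a_{i+1},\dots,a_m)=v(1,\dots,i-1,i,a_{i+1},\dots,a_m)$ by induction on $a_i-i$: the base cases $a_i-i\in\{1,2,3\}$ are handled by (rel6), (rel5) or (rel3), and the combined (rel5)--(rel1)--(rel3) cancellation above; the inductive step simply prepends one more factor and invokes the same pattern. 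Each of the remaining eight cases modifies only the first and/or last one or two factors of $x$, and so reduces after finitely many explicit rel-applications back to the template. The side conditions $a_{r\pm 1}\leqslant\cdots$ in (rel2)--(rel5) are immediate from the hypothesis that the $a_j$ are the entries of a standard column-tableau in strictly increasing order.
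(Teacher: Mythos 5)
Your proposal tracks the paper's own method: direct verification by repeatedly applying the relations of \cref{rel} one $\psi_j$ at a time, keeping account of the moving entry and the accumulated sign, organized by induction (the paper inducts on the integer $k$ with $a_i=i+ke$ and peels $e$ factors from the top of the chain in the pattern \cref{rel1}, \cref{rel3}, \cref{rel6}$^*$, \cref{rel5}, while yours inducts on $a_i-i$ and peels one factor, or three when a residue-$2$ factor occurs; these are the same computation sliced differently). Your description of the \cref{rel5}--\cref{rel1}--\cref{rel3} cancellation past a run of consecutive bad residues is accurate; the one overstatement is the claim that the other eight cases ``reduce to'' the 3(c) template --- the three case headings fix different residue classes for $i$, hence prescribe structurally different tails of $x$ (terminating in $\psi_{i+1}$ in case 1, in $\Psid{i}{i-1}$ in case 2, in $\psi_i$ in case 3), so each tail still needs its own short verification, exactly as the paper disposes of its remaining cases ``similarly by induction'' rather than by quoting the one worked-out subcase.
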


\begin{proof}
We provide the details only for part of 1(a); the other parts are similarly proved by induction. We suppose that $i\equiv{\kappa_2-\kappa_1+1}\Mod{e}$, $a_i\equiv{1+\kappa_2-\kappa_1}\Mod{e}$ and $a_{i+1}\geqslant{a_i+2}$.
	Then $a_i=i+ke$ for some $k>0$.
	We proceed by induction on $k$.
	Let $l$ be the residue of $\kappa_2-\kappa_1$ modulo $e$ and set $i=1+l$ and $a_i=1+l+e$ for the base case, so that
		\begin{align*}
		&\Psiu{2+\kappa_2-\kappa_1}{1+\kappa_2-\kappa_1+e}
		v(1,\dots,i-1,1+\kappa_2-\kappa_1+e,a_{i+1},\dots,a_m)
		\\
		=&
		\Psiu{2+\kappa_2-\kappa_1}{\kappa_2-\kappa_1+e}
		v(1,\dots,i-1,2+\kappa_2-\kappa_1+e,a_{i+1},\dots,a_m)
		&\text{(\cref{rel1})}
		\\
		=&
		-
		\Psiu{2+\kappa_2-\kappa_1}{\kappa_2-\kappa_1+e-1}
		v(1,\dots,i-1,\kappa_2-\kappa_1+e,a_{i+1},\dots,a_m)
		&\text{(\cref{rel3})}
		\\
		=&
		-
		\psi_{2+\kappa_2-\kappa_1}
		v(1,\dots,i-1,3+\kappa_2-\kappa_1,a_{i+1},\dots,a_m)
		&\text{(\cref{rel6})}
		\\
		=&
		v(1,\dots,i-1,1+\kappa_2-\kappa_1,a_{i+1},\dots,a_m),
		&\text{(\cref{rel5}) }
		\end{align*}
	as required.
	Now assume that $\Psiu{i+1}{i+ke}v(1,\dots,i-1,i+ke,a_{i+1},\dots,a_m)=v(1,\dots,i-1,i,a_{i+1},\dots,a_m)$ for some $k>0$.
	Observe
		\begin{align*}
		&
		\Psiu{i+1}{i+(k+1)e}
		v(1,\dots,i-1,i+(k+1)e,a_{i+1},\dots,a_m)
		\\
		=&
		\Psiu{i+1}{i+(k+1)e-1}
		v(1,\dots,i-1,i+(k+1)e+1,a_{i+1},\dots,a_m)
		&\text{(\cref{rel1})}
		\\
		=&
		-
		\Psiu{i+1}{i+(k+1)e-2}
		v(1,\dots,i-1,i+(k+1)e-1,a_{i+1},\dots,a_m)
		&\text{(\cref{rel3})}
		\\
		=&
		-
		\Psiu{i+1}{i+ke+1}
		v(1,\dots,i-1,i+ke+2,a_{i+1},\dots,a_m)
		&\text{(\cref{rel6})}
		\\
		=&
		\Psiu{i+1}{i+ke}
		v(1,\dots,i-1,i+ke,a_{i+1},\dots,a_m)
		&\text{(\cref{rel5})}
		\\
		=&
		v(1,\dots,i-1,i,a_{i+1},\dots,a_m),
		\end{align*}					
	by the inductive hypothesis as required.
\end{proof}

If $\kappa_2\equiv{\kappa_1-1}\Mod{e}$, then there exists no $x\in\scaleobj{1.2}{\Psi}$ for which $xv(2,a_2,\dots,a_m)=v(1,a_2,\dots,a_m)$. Instead, we map each basis vector $v(a_1,\dots,a_m)$ of $S_{((n-m),(1^m))}$ such that $a_1>2$ to $v_{\ttt}=\psi_1\psi_2\dots\psi_mz_{((n-m),(1^m))}$, where $2,3,\dots,m+1$ lie in the leg of $\ttt$.

\begin{lem}
Let $\kappa_2\equiv{\kappa_1-1}\Mod{e}$ and suppose that $a_1>2$. Then there exists an element $x\in\scaleobj{1.2}{\Psi}$ such that $xv(a_1,a_2,\dots,a_m)=v(2,a_2,\dots,a_m)$ as given in the following cases.
\begin{enumerate}
\item{If $a_1\equiv{0}\Mod{e}$, then $x=
	\begin{cases}
	-\psi_{a_1}
	\Psiu{2}{a_1-1}
	&\text{if $a_2=a_1+1$;}\\
	\Psiu{2}{a_1}
	&\text{if $a_2>a_1+1$ or $m=1$ and $a_1<n$}.
	\end{cases}$}
\item{If $a_1\equiv{1}\Mod{e}$, then 
$x=\Psiu{2}{a_1-2}$.}
\item{If $a_1\not\equiv{0,1}\Mod{e}$, then
$x=\Psiu{2}{a_1-1}$.}
\end{enumerate}
\end{lem}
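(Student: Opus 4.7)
The plan is to verify each of the three formulas by induction on $a_1$, in direct analogy with the proof of \cref{irr}. For each residue class of $a_1$ modulo $e$, I apply the proposed $x$ to $v(a_1, a_2, \ldots, a_m)$ and use the explicit action formulas of \cref{rel} to reduce step by step to $v(2, a_2, \ldots, a_m)$. The induction parameter is $k \geqslant 0$ in the expression $a_1 = r + ke$ for the appropriate residue $r$ in each case; base cases are checked by direct computation, and the inductive step reduces $a_1$ by $e$ through the same chain of $\psi$-reductions used in the inductive step of \cref{irr}.

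For case 3 where $a_1 \not\equiv 0, 1 \Mod{e}$, the product $x = \Psiu{2}{a_1 - 1}$ is applied right to left. The rightmost generator $\psi_{a_1 - 1}$ applies \cref{rel6} to reduce the leg entry to $a_1 - 1$ generically, and subsequent generators continue the descent; at intermediate residues coinciding with $\kappa_2 - \kappa_1$ or $2 + \kappa_2 - \kappa_1$ modulo $e$, \cref{rel3} or \cref{rel5} intervenes instead, introducing signs that eventually telescope through the chain. For case 1 where $a_1 \equiv 0 \Mod{e}$, one first applies $\psi_{a_1}$ to increase the leg entry to $a_1 + 1$ via \cref{rel1}, and then $\psi_{a_1 - 1}$ invokes \cref{rel3} (since $a_1 - 1 \equiv \kappa_2 - \kappa_1 \Mod{e}$ and $a_r = l + 2$ with $r = 1$) to produce $-v(a_1 - 1, a_2, \ldots, a_m)$; the remaining factor $\Psiu{2}{a_1 - 2}$ then proceeds as in case 3. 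The subcase split between $a_2 = a_1 + 1$ and $a_2 > a_1 + 1$ (or $m = 1$) accounts for an additional $\psi_{a_1}$ factor needed when $a_2$ coincides with $a_1 + 1$, exactly mirroring the analogous split in part~1(a) of \cref{irr}. For case 2 where $a_1 \equiv 1 \Mod{e}$, the shortened product $\Psiu{2}{a_1 - 2}$ begins at $\psi_{a_1 - 2}$, which immediately invokes \cref{rel3} (since $a_1 - 2 \equiv \kappa_2 - \kappa_1 \Mod{e}$) to produce $-v(a_1 - 2, a_2, \ldots, a_m)$; the reduction then continues as in case 3.

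The main obstacle is the careful bookkeeping of residue conditions at each reduction step and the correct propagation of signs through the chain, especially in the edge case $e = 3$ where the three residue classes $\{0, 1, 2\}$ coincide with the critical residues $\kappa_2 - \kappa_1$ and $2 + \kappa_2 - \kappa_1$ modulo $3$, so that several subcases of \cref{rel} interact simultaneously. The reduction terminates at $v(2, a_2, \ldots, a_m)$ rather than $v(1, a_2, \ldots, a_m)$ because the would-be final step $\psi_1 \cdot v(2, a_2, \ldots, a_m)$ does not produce a scalar multiple of $v(1, a_2, \ldots, a_m)$, as noted immediately before the lemma; hence no element of $\scaleobj{1.2}{\Psi}$ bridges that last step, and the lemma gives the sharpest possible reduction.
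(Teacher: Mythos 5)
The paper supplies no proof of this lemma; it is left implicit that one proceeds by an induction modelled on \cref{irr}, whose proof the paper itself carries out only for one representative subcase. Your plan---induct on $k$ with $a_1 = r + ke$ and track the leg entry step by step through $\Psiu{2}{\,\cdot\,}$ using \cref{rel}---is exactly that intended strategy, and a careful execution of it does succeed.

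That said, your description of case 3 misrepresents the shape of the chain in two concrete ways, and it is worth getting these right because they are where the bookkeeping actually lives. First, when $a_1 \equiv 2 \Mod e$ the rightmost generator $\psi_{a_1-1}$ sits at residue $a_1-1 \equiv 2+\kappa_2-\kappa_1 \Mod e$ (recall $\kappa_2-\kappa_1 \equiv e-1$), so the opening move is governed by \cref{rel5}, not by \cref{rel6} as you assert. Second, the descent is not monotone: you list $\kappa_2-\kappa_1$ and $2+\kappa_2-\kappa_1$ as the only critical residues, but the third one, $1+\kappa_2-\kappa_1 \Mod e$, triggers \cref{rel1}, which sends the leg entry \emph{up} by $1$. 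This is visible in the displayed computation in the proof of \cref{irr}, whose first line is precisely a \cref{rel1} step ($i+(k+1)e \mapsto i+(k+1)e+1$). The correct picture is a sawtooth: in each block of $e$ consecutive generators one sees a \cref{rel5} step ($-2$, sign), a \cref{rel1} step ($+1$), a \cref{rel3} step ($-2$, sign), and $e-3$ \cref{rel6} steps ($-1$ each), for a net drop of $e$ and a pair of signs that cancel. With these corrections your induction goes through; the remaining verifications (that \cref{rel1} indeed applies whenever the leg entry equals the generator index, that the case split on $a_2 = a_1+1$ substitutes \cref{rel2} for \cref{rel1} and is compensated by the extra $\psi_{a_1}$ factor, and the treatment of the base case) are routine.

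One small further point: as stated the proposed element $x$ in some subcases produces $-v(2,a_2,\ldots,a_m)$ rather than $v(2,a_2,\ldots,a_m)$ (one can check this directly for, say, $e=3$, $a_1=6$, $a_2 > 7$). This is harmless since $\scaleobj{1.2}{\Psi}$ includes scalar multiples, but it is worth tracking the overall sign if one wants the formulas for $x$ to be literally correct rather than correct up to a nonzero scalar.
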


The next result will be a useful addition for determining irreducibility of $\mathscr{H}_n^{\Lambda}$-modules in the following section.

\begin{cor}\label{cor:irr}
\begin{enumerate}
\item{Let $\kappa_2\not\equiv{\kappa_1-1}\Mod{e}$ and suppose that $a_i>i$ for some $i\in\{1,\dots,m\}$. Then there exists an element $x\in\scaleobj{1.2}{\Psi}$ such that
$
x
	v(1,\dots,i-1,a_i,\dots,a_m)
	= v(1,\dots,i,a_{i+1},\dots,a_m).
$}
\item{Let $\kappa_2\equiv{\kappa_1-1}\Mod{e}$.
\begin{enumerate}
\item{Suppose that $a_i>i+1$ for some $i\in\{1,\dots,m-1\}$. Then there exists an element $x\in\scaleobj{1.2}{\Psi}$ such that
$
x
	v(2,\dots,i,a_i,\dots,a_{m-1},n)
	= v(2,\dots,i+1,a_{i+1},\dots,a_{m-1},n).
$}
\item{Suppose that $a_i>i$ for some $i\in\{2,\dots,m\}$. Then there exists an element $x\in\scaleobj{1.2}{\Psi}$ such that
$
x
	v(1,\dots,i-1,a_i,\dots,a_m)
	= v(1,\dots,i,a_{i+1},\dots,a_m).
$}
\end{enumerate}}
\end{enumerate}
\end{cor}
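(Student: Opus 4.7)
The plan is to derive all three statements by reduction to \cref{irr}, which already supplies the required manipulation in its standard setting.

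For Part~(1), the hypothesis $\kappa_2\not\equiv\kappa_1-1\Mod{e}$ renders vacuous the sole restriction imposed in \cref{irr} (namely the condition ``$i\neq 1$ when $\kappa_2\equiv\kappa_1-1\Mod{e}$'' in case~2), so the proposition applies directly for every $i\in\{1,\dots,m\}$ and produces the desired element $x\in\scaleobj{1.2}{\Psi}$. Part~(2)(b) is likewise an immediate application of \cref{irr}, since the hypothesis $i\in\{2,\dots,m\}$ forces $i\neq 1$ and thereby automatically satisfies the restriction, regardless of the congruence class of $i$ modulo $e$.

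For Part~(2)(a) I would separate the cases $i=1$ and $i\geq 2$. When $i=1$, the basis vector is $v(a_1,a_2,\dots,a_{m-1},n)$ with $a_1>2$, and the target is $v(2,a_2,\dots,a_{m-1},n)$; this is precisely the content of the lemma immediately preceding the corollary (with $n$ playing the role of $a_m$). When $i\geq 2$, the leading segment $2,3,\dots,i$ of the leg mimics the segment $1,2,\dots,i-1$ appearing in \cref{irr}, shifted up by one. Since every generator $\psi_l$ appearing in the explicit formulas of \cref{irr} has index $l<n-1$, the trailing entry $n$ plays no role, so the argument of \cref{irr}---a case split on the residue of $i$ modulo $e$ combined with repeated applications of \cref{rel}---carries through with indices shifted by one, producing an element $x$ that pushes $a_i$ down to value $i+1$ rather than to $i$.

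The principal technical point will be verifying that, after the index shift, the residue hypotheses of \cref{rel} still fire the expected relations at each step of the induction on $a_i-i$. This is routine because those hypotheses depend only on $l\pmod{e}$ for the generator $\psi_l$ being applied, and the overall inductive structure is unaffected by relabelling $1,\dots,i-1$ as $2,\dots,i$ in the leg. No new cases arise beyond those already treated in the proof of \cref{irr} and in the preceding lemma, so once the correspondence between the shifted and unshifted setups is made explicit, the statement of Part~(2)(a) follows.
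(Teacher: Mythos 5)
Your reduction of Part~(1) and Part~(2)(b) to \cref{irr}, and of the $i=1$ case of Part~(2)(a) to the lemma immediately preceding the corollary, is exactly right and matches the intended derivation. The problem is the $i\geq 2$ case of Part~(2)(a), which is where the real content lies, and where your sketch has two concrete gaps.

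First, the claim that ``every generator $\psi_l$ appearing in the explicit formulas of \cref{irr} has index $l<n-1$'' is false: for instance in case 1(a) with $a_{i+1}=a_i+1$ the formula is $x=-\psi_{a_i}\Psiu{i+1}{a_i-1}$, and nothing prevents $a_i=n-1$ (which does occur for $v(2,\dots,i,n-1,n)$ in $\im(\phi_m)$), so $\psi_{n-1}$ can appear and the trailing $n$ very much \emph{does} play a role. In fact the branching in \cref{irr} on whether $a_{i+1}=a_i+1$ or $a_{i+1}\geq a_i+2$ is precisely where that interaction is absorbed, so one cannot discard it.

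Second, ``carries through with indices shifted by one'' is not a well-defined operation here, and the assertion that this is ``routine because those hypotheses depend only on $l\pmod e$'' actually argues against you: shifting $l\mapsto l+1$ \emph{changes} $l\pmod e$, so a step that invoked \cref{rel3} (for $l\equiv\kappa_2-\kappa_1$) would, after a naive shift, land in the regime of \cref{rel5} or \cref{rel6}, which produce different signs and different outputs. What actually happens, as one can check on small examples (e.g.\ $e=3$, $\kappa=(0,2)$, $v(2,5,6)\mapsto v(2,3,6)$ via $-\psi_4$, or $v(2,3,7,9)\mapsto v(2,3,4,9)$ via $-\psi_4\psi_3\psi_5$), is that the right $x$ is the one produced by \cref{irr} with $i$ \emph{replaced by} $i+1$, not by a uniform increment of every $\psi$-index. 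But $a_i$ now sits at position $i$ of the leg rather than at position $i+1$, so the conditions in \cref{rel} that depend on position (such as ``$r=m$'' and whether the next entry up is some $a_{i+1}$ or is $n$) do not line up automatically and need to be checked case by case, exactly as in the inductive proof of \cref{irr}. This is doable, and it is what the paper implicitly expects the reader to do, but it is a genuine (if short) argument, not a relabelling.

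So: Parts~(1), (2)(b), and the $i=1$ subcase of (2)(a) are fine as you have them. For the $i\geq 2$ subcase of (2)(a), you need to actually re-run the case split of \cref{irr}'s proof with the case labels governed by the residue of $i+1$ rather than $i$, with $a_i$ in position $i$ and $i$ in position $i-1$, and with the observation that since $i\leq m-1$ and $a_m=n$, the relevant ``next entry up'' is either $a_{i+1}$ or $n$; the conclusion holds but does not follow from the phrase ``carries through.''
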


\subsection{Linear combinations of basis vectors of $S_{((n-m),(1^m))}$}

To ascertain irreducibility of a non-zero submodule $M\in S_{((n-m),(1^m))}$, we need to show that this submodule is generated by any element in the basis of $M$. However, it is non-trivial that an arbitrary, non-zero submodule of $S_{((n-m),(1^m))}$ even contains a single basis element. To this end, we first introduce a result necessary for understanding the action of $\mathscr{H}_n^{\Lambda}$ on non-zero linear combinations of standard basis elements of $S_{((n-m),(1^m))}$.

\begin{prop}\label{lincomb}
Let $\tts,\ttt\in\std((n-m),(1^m))$ be distinct. Then there exists an $x\in\mathscr{H}_n^{\Lambda}$, which either lies in $\scaleobj{1.2}{\Psi}$ or is of the form $x=e(\mathbf{i})$ for some $\mathbf{i}\in I^n$, such that exactly one of $xv_{\tts}$ and $xv_{\ttt}$ is zero.
\end{prop}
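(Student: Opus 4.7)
The plan is to split into two cases depending on whether the residue sequences $\mathbf{i}_{\tts}$ and $\mathbf{i}_{\ttt}$ agree. If $\mathbf{i}_{\tts}\neq\mathbf{i}_{\ttt}$, I would take $x=e(\mathbf{i}_{\tts})$: by \cref{lem:idemp}, $xv_{\tts}=v_{\tts}\neq 0$ (it is a basis vector by \cref{gradedmod}) while $xv_{\ttt}=0$, settling this easy case.

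The substantive case is $\mathbf{i}_{\tts}=\mathbf{i}_{\ttt}$. Writing $\tts=v(a_1,\dots,a_m)$ and $\ttt=v(b_1,\dots,b_m)$ with entries increasing, pick the smallest index $i$ at which $a_i\neq b_i$; without loss of generality $a_i<b_i$. Then $a_j=b_j$ for $j<i$ and $b_j\geqslant b_i>a_i$ for $j\geqslant i$, so $a_i$ lies in the leg of $\tts$ (at row $i$) but in the arm of $\ttt$. My proposed choice is $x=\psi_{a_i}$. The key observation is that $\mathbf{i}_{\tts}(a_i)=\mathbf{i}_{\ttt}(a_i)$ forces the congruence $a_i\equiv 1+\kappa_2-\kappa_1\Mod{e}$: in $\tts$ the entry $a_i$ has residue $\kappa_2+1-i$, while in $\ttt$ exactly $i-1$ of the $b_j$ lie below $a_i$, so $a_i$ occupies arm-column $a_i-i+1$ with residue $\kappa_1+a_i-i$, and equating these yields the claimed congruence.

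With this congruence in hand I would conclude as follows. For $\psi_{a_i}v_{\ttt}$, case (i) of \cref{rel} is inapplicable since $a_i$ is not in the leg of $\ttt$; cases (ii), (iii), (iv) respectively require $a_i\equiv\kappa_2-\kappa_1$, $a_i\equiv 2+\kappa_2-\kappa_1$, or $a_i+\kappa_1-\kappa_2\not\equiv 0,1,2\Mod{e}$, all contradicted by the congruence (for the last, $a_i+\kappa_1-\kappa_2\equiv 1$), so $\psi_{a_i}v_{\ttt}=0$. For $\psi_{a_i}v_{\tts}$, case (i) of \cref{rel} produces a non-zero basis vector provided $i=m$ or $a_{i+1}\geqslant a_i+2$. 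The remaining possibility $a_{i+1}=a_i+1$ I would rule out by a parallel residue computation on $a_i+1$: one first checks that $b_i=a_i+1$ is impossible (else $0\equiv 1\Mod{e}$), so $a_i+1$ also lies in the arm of $\ttt$, forcing $a_i\equiv\kappa_2-\kappa_1-1\Mod{e}$; combined with the earlier congruence this gives $2\equiv 0\Mod{e}$, contradicting $e\geqslant 3$. The main obstacle is exactly this residue bookkeeping, and the crucial ingredient making everything work is the double use of $e\geqslant 3$ to eliminate both the ``bad'' configuration $a_{i+1}=a_i+1$ and the three non-trivial $\psi$-action cases on $v_{\ttt}$.
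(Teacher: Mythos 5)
Your proposal is correct, and it follows a genuinely different, more economical route than the paper's own proof. You open with a clean dichotomy: if $\mathbf{i}_{\tts}\neq\mathbf{i}_{\ttt}$, an idempotent settles the matter by \cref{lem:idemp}; otherwise you pick the \emph{minimal} index $i$ at which the leg entries disagree, say $a_i<b_i$. The paper instead works with the \emph{maximal} such index, and this forces a substantial case analysis on the gap between $a_r$ and the common next entry $a_{r+1}=b_{r+1}$, followed by a separate treatment of the situation $a_m\neq b_m$. Your minimal-index choice is what makes the argument collapse: with $i$ minimal, exactly $i-1$ leg entries of $\ttt$ lie below $a_i$, so $a_i$ sits at arm node $(1,a_i-i+1,1)$ of $\ttt$, and equating residues there with $\res(i,1,2)$ in $\tts$ pins down $a_i\equiv 1+\kappa_2-\kappa_1\Mod{e}$. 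That single congruence is simultaneously incompatible with the hypotheses of all four non-vanishing cases of \cref{rel} applied to $v_{\ttt}$ with $l=a_i$, so $\psi_{a_i}v_{\ttt}=0$ in one sweep rather than case by case. Your subsidiary residue computation — first showing $b_i=a_i+1$ would force $1\equiv 0\Mod{e}$, then that $a_{i+1}=a_i+1$ would place $a_i+1$ in the arm of $\ttt$ and force $a_i\equiv\kappa_2-\kappa_1-1\Mod{e}$, whence $2\equiv 0\Mod{e}$ — correctly rules out the sole obstruction to \cref{rel1} sending $v_{\tts}$ to a standard basis vector, and you use $e\geqslant 3$ at exactly the two places you identify. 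Altogether your proof is shorter and more conceptually unified than the paper's, because the right index choice converts the paper's multi-case structure into a single congruence check.
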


\begin{proof}
Set $a_r:=\ttt(r,1,2)$ and $b_r:=\tts(r,1,2)$ for $1\leqslant{r}\leqslant{m}$ and first suppose that $a_m=b_m$.

If $a_r>b_r$ then observe that $\ttt(r,1,2)=\tts(1,a_r-r,1)=a_r$ where $\res(r,1,2)={\kappa_2+1-r}$, $\res(1,a_r-r,1)={\kappa_1+a_r-r-1}$. Hence, $e_{\tts}\neq{e_{\ttt}}$ if $a_r\not\equiv{2+\kappa_2-\kappa_1}\Mod{e}$. We thus have $e_{\ttt}(v(a_1,\dots,a_m)+v(b_1,\dots,b_m))=v(a_1,\dots,a_m)$ and $e_{\tts}(v(a_1,\dots,a_m)+v(b_1,\dots,b_m))=v(b_1,\dots,b_m)$, by \cref{psiidemp} and the first part of \cref{cor:spechtpres}.

If $m=1$ then $a_1=i>b_1$. We have $e_{\ttt}\neq{e_{\tts}}$ if $i\not\equiv{2+\kappa_2-\kappa_1}\Mod{e}$. So suppose $i\equiv{2+\kappa_2-\kappa_1}\Mod{e}$. Then $\psi_iv(a_1)=v(i+1)\neq 0$ by \cref{rel1}, whereas $\psi_iv(b_1)=\psi_i\Psid{b_1-1}{1}z_{\lambda}=\Psid{b_1-1}{1}\psi_iz_{((n-m),(1^m))}=0$, by \cref{psicomm} and \cref{cor:spechtpres}.
		
Now suppose $m>1$ and let $r$ be maximal such that $a_r\neq{b_r}$ without loss of generality. Set $a_r=i>b_r$ and $a_{r+1}=b_{r+1}=j$.
\begin{enumerate}
	\item{Suppose $j\geqslant{i+3}$.
	Then $\psi_iv(b_1,\dots,b_m)=0$ by \cref{rel}, whereas, by \cref{rel1},
	$\psi_iv(a_1,\dots,a_m)
	=v(a_1,\dots,a_{r-1},i+1,j,a_{r+2})\neq 0$.
}
	\item{Suppose $j=i+2$.
		\begin{enumerate}
		\item{Suppose $j\not\equiv{2+\kappa_2-\kappa_1}\Mod{e}$.
		Then $\psi_iv(b_1,\dots,b_m)=0$ by \cref{rel}, whereas	$\psi_iv(a_1,\dots,a_m)=v(a_1,\dots,a_{r-1},i+1,i+2,a_{r+2},\dots,a_m)\neq 0$, by \cref{rel1}.}
		\item{Suppose $j\equiv{2+\kappa_2-\kappa_1}\Mod{e}$.
		Then $e_{\tts}\neq{e_{\ttt}}$ since $i\not\equiv{2+\kappa_2-\kappa_2}\Mod{e}$.}
		\end{enumerate}}
	\item{Suppose that $j=i+1$.	
		\begin{enumerate}
		\item{Suppose $j\not\equiv{3+\kappa_2-\kappa_1}\Mod{e}$. Then $e_{\tts}\neq{e_{\ttt}}$ since $i\not\equiv{2+\kappa_2-\kappa_1}\Mod{e}$.}
		\item{Suppose $j\equiv{3+\kappa_2-\kappa_1}\Mod{e}$.	
		Firstly, let $r\neq{1}$.
			\begin{enumerate}
			\item{Suppose $a_{r-1}=i-1$. We know from \cref{rel} that $\psi_{i-1}v(a_1,\dots,a_m)=0$ and $\psi_{i}v(a_1,\dots,a_m)=0$. However, if $b_r=i-1$ then $\psi_{i-1}v(b_1,\dots,b_m)=v(b_1,\dots,b_{r-1},i,i+1,b_{r+2},\dots,b_m)\neq 0$, by \cref{rel1}; if $b_r\leqslant{i-2}$ then $\psi_iv(b_1,\dots,b_m)
				=-v(b_1,\dots,b_r,i-1,b_{r+2},\dots,b_m)\neq 0$, by \cref{rel5}.}
			\item{Suppose $a_{r-1}\leqslant{i-2}$.
				\begin{enumerate}
				\item{Suppose $b_r=i-1$. 
				Then we know from \cref{rel4} that $\psi_iv(a_1,\dots,a_m)=v(a_1,\dots,a_{r-1},i-1,i,a_{r+2},\dots,a_m)\neq 0$, whereas $\psi_iv(b_1,\dots,b_m)=0$ by \cref{rel}.}
				\item{Suppose $b_r\leqslant{i-2}$. Then we know from \cref{rel1} and \cref{rel4} that
					\begin{align*}
					\psi_i^2v(a_1,\dots,a_m)
					=&\psi_i
					v(a_1,\dots,a_r
					,i-1,i,a_{r+2},\dots,a_m)\\
					=&
					v(a_1,\dots,a_r
					,i-1,i+1,a_{r+2}
					,\dots,a_m)\neq 0,	
					\end{align*}	
				whereas	we know from \cref{rel}, and in particular \cref{rel5}, that $\psi_i^2v(b_1,\dots,b_m)=-\psi_iv(b_1,\dots,b_r,i-1,b_{r+2},\dots,b_m)=0$.	
					}
				\end{enumerate}}
			\end{enumerate}
	Now let $r=1$.
	If $b_1=i-1$, then $\psi_iv(a_1,\dots,a_m)=v(i-1,i,a_3,\dots,a_m)\neq 0$, by \cref{rel4}, whereas $\psi_iv(b_1,\dots,b_m)=0$ by \cref{rel}.
	If $b_1\leqslant{i-2}$ then it follows from \cref{rel}, in particular \cref{rel4}, that $\psi_{i-1}\psi_iv(a_1,\dots,a_m)=\psi_{i-1}v(i-1,i,a_3,\dots,a_m)=0$, whereas we know from \cref{rel1} and \cref{rel5} that
			\begin{align*}
			\psi_{i-1}\psi_iv(b_1,\dots,b_m)=&
			-\psi_{i-1}v(b_1,i-1,a_3,\dots,a_m)
			= -v(b_1,i,a_3,\dots,a_m)\neq 0.
			\end{align*}
		}
		\end{enumerate}}
\end{enumerate}

Now suppose that $a_m\neq{b_m}$. It is sufficient to consider the following three cases:
	\begin{itemize}
	\item{$a_{m-1}=b_{m-1}=i$, $a_{m}=j>b_{m}$,}
	\item{$a_{m-1}=i>b_{m-1}$, $a_{m}=j>b_{m}$,}
	\item{$a_{m-1}=i<b_{m-1}$, $a_{m}=j>b_{m}$.}
	\end{itemize}
Observe that $\ttt(m,1,2)=\tts(1,j-m,1)=j$ where $\res(m,1,2)={\kappa_1+1-m}$ and $\res(1,j-m,1)={\kappa_1+j-m-1}$. Thus, if $j\not\equiv{2+\kappa_2-\kappa_1}\Mod{e}$ then $e_{\ttt}\neq{e_{\tts}}$. We now suppose that $j\equiv{2+\kappa_2-\kappa_1}\Mod{e}$. If $j<n$ then
	\[
	\psi_j(v(a_1,\dots,a_m)+v(b_1,\dots,b_m))
	=\psi_jv(a_1,\dots,a_m)=
	v(a_1,\dots,a_{m-1},j+1)\neq 0,
	\]
by \cref{rel1}. Now suppose $j=n$ and let $b_{m}\leqslant{n-2}$. We have $\ttt(1,n-m,1)=\tts(1,n-m-1,1)$ where $\res(1,n-m,1)={\kappa_2+1-m}\neq{\kappa_2-m}=\res(1,n-m-1,1)$, and thus $e_{\ttt}\neq{e_{\tts}}$. Whereas, if $b_{m}=n-1$, then it follows from \cref{rel1} that
	\[
	\psi_{n-1}(v(a_1,\dots,a_m)+v(b_1,\dots,b_m))
	=
	\psi_{n-1}v(b_1,\dots,b_m)
	=
	(b_1,\dots,b_{m-1},n)\neq 0.
	\] 
\end{proof}

\begin{lem}\label{lem:lincomb}
Any non-zero submodule of $S_{((n-m),(1^m))}$ contains a standard basis vector $v_{\ttt}$ for some $\ttt\in\std((n-m),(1^m))$.
\end{lem}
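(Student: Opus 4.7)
The plan is a minimal-support argument that combines \cref{lincomb} with the sparsity of the $\psi_l$-matrices established in \cref{cor:matrix}. Let $M$ be a non-zero submodule of $S_{((n-m),(1^m))}$. I would pick a non-zero $v\in M$ whose expansion $v=\sum_{\ttu\in T}c_{\ttu}v_{\ttu}$ in the standard basis (with each $c_{\ttu}\neq 0$) has minimal support size $|T|$, and aim to show $|T|=1$, from which the lemma follows.

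Supposing for contradiction that $|T|\geqslant 2$, I would pick distinct $\tts,\ttt\in T$ and invoke \cref{lincomb} to produce $x\in\mathscr{H}_n^{\Lambda}$ (either $x=e(\mathbf{i})$ or $x\in\scaleobj{1.2}{\Psi}$) such that exactly one of $xv_{\tts},xv_{\ttt}$ is zero; without loss of generality assume $xv_{\ttt}=0$ and $xv_{\tts}\neq 0$. The goal is then to show that $xv\in M$ is non-zero with support strictly smaller than $|T|$, contradicting minimality.

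The crux of the argument is the claim that $x$ sends each standard basis vector to either $0$ or a non-zero scalar multiple of a single standard basis vector, \emph{and} that the resulting partial assignment $\ttu\mapsto\ttu'$ is injective on its domain of non-vanishing. For $x=e(\mathbf{i})$ this is immediate from \cref{lem:idemp}, which forces $e(\mathbf{i})v_{\ttu}\in\{v_{\ttu},0\}$. For $x\in\scaleobj{1.2}{\Psi}$, I would write $x$ as a scalar multiple of a product $\psi_{r_1}\cdots\psi_{r_k}$ and proceed by induction on $k$: \cref{cor:matrix} gives the case $k=1$ (at most one non-zero entry per row and per column forces the action to be a partial injection on basis vectors up to scalars), and a short check shows that composing two partial injections of this shape yields another.

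Granted the claim, I would expand
\begin{equation*}
xv \;=\; \sum_{\substack{\ttu\in T\\ xv_{\ttu}\neq 0}} c_{\ttu}\alpha_{\ttu}\,v_{\ttu'},
\end{equation*}
where the basis vectors $v_{\ttu'}$ are pairwise distinct and the scalars $\alpha_{\ttu}$ are non-zero. The summand corresponding to $\ttu=\tts$ contributes a non-zero multiple of $v_{\tts'}$ that cannot be cancelled by other terms, so $xv\neq 0$; meanwhile $\ttt$ is excluded from the sum, so $|\mathrm{supp}(xv)|\leqslant|T|-1$, giving the desired contradiction. I expect the main obstacle to be the compositional stability of the partial-injection property: \cref{cor:matrix} is only stated for a single generator, and one has to verify that long products $\psi_{r_1}\cdots\psi_{r_k}$ never collapse two distinct basis vectors onto the same image. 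Once this is settled, the minimal-support reduction itself is bookkeeping.
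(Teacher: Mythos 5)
Your proof is correct and takes essentially the same route as the paper: a minimal-support element is attacked with the separating element from \cref{lincomb}, and \cref{cor:matrix}, together with the (easy) observation that a product of $\psi$-generators remains a partial injection on standard basis vectors up to scalars, guarantees that the support strictly shrinks while the element stays non-zero. The only cosmetic difference is that the paper first replaces $v$ by $e(\mathbf{i})v$ so that every tableau in the support shares the same residue sequence and only elements of $\scaleobj{1.2}{\Psi}$ arise from \cref{lincomb}, whereas you treat the idempotent case directly via \cref{lem:idemp}; you also make explicit the compositional stability point that the paper leaves implicit.
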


\begin{proof}
Let $0\neq{M}\subseteq{S_{((n-m),(1^m))}}$ and consider an arbitrary non-zero element $v$ of $M$. Then
\[
v=\sum_{i=1}^r c_iv_{{\ttt}_i}, \quad {c_i}\in\mathbb{F}\backslash\{0\},
\]
where $v_{{\ttt}_i}\neq{v_{{\ttt}_j}}$ for all $1\leqslant{i}\neq{j}\leqslant{r}$.
We can instead replace $v$ with $e(\mathbf{i})v$ such that $e(\mathbf{i})v=v$ for some $\mathbf{i}\in I^n$, and thus assume that $\mathbf{i}_{\ttt_i}=\mathbf{i}_{\ttt_j}$ for all $i,j\in\{1,\dots,r\}$.

We choose $v$ with $r\geqslant{1}$ minimal. If $r=1$, we are done, so we now suppose that $r>1$.
It thus follows from \cref{lincomb} that we can find an $x\in\scaleobj{1.2}{\Psi}$ such that exactly one of $xv_{\ttt_1}$ and $xv_{\ttt_2}$ is zero. Without loss of generality, we let $xv_{\ttt_1}=0$. Then we have
\[
M\ni{xv}=\sum_{i=2}^r\pm{c_iv_{\ttt_i}},
\]
where, for all $i\in\{2,\dots,r\}$, $x(c_iv_{\ttt_i})$ equals zero or $\pm c_iv_{\tts_i}$ for some standard $((n-m),(1^m))$-tableau $\tts_i$.
Moreover, by \cref{cor:matrix}, we know that $i=j$ whenever $\tts_i=\tts_j$, for all $i,j\in\{2,\dots,r\}$.
We have thus contradicted the minimality of $r$, and hence there must exist an $x\in\scaleobj{1.2}{\Psi}$ such that $xv=v_{\ttt}$ for some $v_{\ttt}\in{S_{((n-m),(1^m))}}$.
\end{proof}

\subsection{Case I: $S_{((n-m),(1^m))}$ with $\kappa_2\not\equiv{\kappa_1-1}\Mod{e}$ and $n\not\equiv{\kappa_2-\kappa_1+1}\Mod{e}$}

For this case, we claim that Specht modules labelled by hook bipartitions are irreducible, and thus are generated by any standard basis element of $S_{((n-m),(1^m))}$.

\begin{thm}\label{thm:case1comp}
Suppose that $\kappa_2\not\equiv{\kappa_1-1}\Mod{e}$ and $n\not\equiv{\kappa_2-\kappa_1+1}\Mod{e}$. Then $S_{((n-m),(1^m))}$ is an irreducible $\mathscr{H}_n^{\Lambda}$-module for all $m\in\{0,\dots,n\}$.
\end{thm}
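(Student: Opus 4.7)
The plan is to show that every non-zero $\mathscr{H}_n^{\Lambda}$-submodule $M \subseteq S_{((n-m),(1^m))}$ coincides with the whole Specht module. Since $S_{((n-m),(1^m))}$ is cyclic on the generator $z_{((n-m),(1^m))}$ (\cref{Spechtdef}), it is enough to exhibit $z_{((n-m),(1^m))} \in M$. First I would apply \cref{lem:lincomb} to extract some standard basis vector $v(a_1, \dots, a_m) \in M$, where $a_r = \ttt(r,1,2)$, and note that $z_{((n-m),(1^m))} = v(1, 2, \dots, m)$. It then suffices to produce an element $x \in \mathscr{H}_n^{\Lambda}$ with $x v(a_1, \dots, a_m) = v(1, 2, \dots, m)$.

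I would construct such an $x$ by induction on the smallest index $i \in \{1, \dots, m\}$ for which $a_i > i$: if no such $i$ exists then the vector is already $z_{((n-m),(1^m))}$ and there is nothing to do. Otherwise, applying \cref{cor:irr}(1) — whose hypothesis is satisfied because $\kappa_2 \not\equiv \kappa_1 - 1 \pmod{e}$ — yields an element $x_i \in \scaleobj{1.2}{\Psi}$ with
\[
x_i \, v(1, \dots, i-1, a_i, a_{i+1}, \dots, a_m) = v(1, \dots, i-1, i, a_{i+1}, \dots, a_m).
\]
The smallest ``misplaced'' index in the resulting vector strictly exceeds $i$, so iterating this step at most $m$ times produces $v(1, 2, \dots, m) = z_{((n-m),(1^m))}$ inside $M$. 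Hence $M = \mathscr{H}_n^{\Lambda} \cdot z_{((n-m),(1^m))} = S_{((n-m),(1^m))}$, giving irreducibility.

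The principal obstacle lies not in the overall scheme but in ensuring that \cref{cor:irr}(1), built from the detailed case analysis of \cref{irr} and the relations of \cref{rel}, is genuinely applicable at every step. The assumption $\kappa_2 \not\equiv \kappa_1 - 1 \pmod e$ is needed to neutralise the side restriction in part (2) of \cref{irr}; the assumption $n \not\equiv \kappa_2 - \kappa_1 + 1 \pmod e$ is needed to exclude the boundary configuration $i = m$, $a_m = n$, $a_m \equiv 1 + \kappa_2 - \kappa_1 \pmod e$ — precisely the configuration responsible for the non-zero homomorphism $\gamma_{m-1}$ of \cref{prop:homs}, which would otherwise produce a proper submodule and genuinely block the reduction. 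Any other edge situation encountered in the iteration (for example $i = m$ with $a_m = n \equiv 2 + \kappa_2 - \kappa_1 \pmod e$) can still be resolved by composing several formulas from \cref{rel}, so that a valid $x_i \in \scaleobj{1.2}{\Psi}$ is produced at every stage and the induction carries through.
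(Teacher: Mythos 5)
Your proposal is correct and follows essentially the same route as the paper's own proof: both reduce to showing a single standard basis vector generates $S_{((n-m),(1^m))}$ via \cref{lem:lincomb}, and both iterate \cref{cor:irr}(1) on the minimal misplaced leg entry until reaching $z_{((n-m),(1^m))}$. Your additional commentary on how the two modular congruence hypotheses forestall the degenerate boundary cases (and hence make \cref{cor:irr} applicable at every step) is accurate and in fact more explicit than the paper's proof, but the underlying argument is identical.
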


\begin{proof}
Since we know from \cref{lem:lincomb} that any non-sero submodule of $S_{((n-m),(1^m))}$ contains a standard basis element $v_{\ttt}$ for some $\ttt\in\std((n-m),(1^m))$, it suffices to show that any standard basis element $v(a_1,\dots,a_m)$ generates $S_{((n-m),(1^m))}$.
We know that $v(1,\dots,m)=z_{((n-m),(1^m))}$ generates $S_{((n-m),(1^m))}$, so we now let $i$ be minimal such that $a_i>i$, for some $i\in\{1,\dots,m\}$, and proceed by downwards induction on $i$. 
By \cref{cor:irr}, there exists an element $x\in\scaleobj{1.2}{\Psi}$ such that
$xv(1,\dots,i-1,a_i,\dots,a_m)=v(1,\dots,i,a_{i+1},\dots,a_m)$.
By induction, we know that $v(1,\dots,i,a_{i+1},\dots,a_m)$ generates $S_{((n-m),(1^m))}$, and thus $v(a_1,\dots,a_m)$ also generates $S_{((n-m),(1^m))}$.
\end{proof}

\begin{ex}
	Set $e=3$ and $\kappa=(0,0)$. We know from \cref{thm:case1comp} that $S_{((2),(1^3))}$ is an irreducible $\mathscr{H}_5^{\Lambda}$-module, so for each $\tts,\ttt\in\std((2),(1^3))$ there exists an element $x\in\scaleobj{1.2}{\Psi}$ for which $x{v_{\ttt}}=v_{\tts}$. Recall that a standard $((2),(1^3))$-tableau is completely determined by the three entries in its leg. We represent the basis elements of $S_{((2),(1^3))}$ by the legs of the corresponding $((2),(1^3))$-tableaux, together with the only non-trivial relations between these elements. Observe that we can find a directed path from any standard $((2),(1^3))$-tableau to any other standard $((2),(1^3))$-tableau, as expected.
	\[
	\begin{tikzpicture}[scale=0.4,>=stealth]
	\fill [] (-12,9) node {$\young(2,3,5)$};
	\fill [] (-12,-9) node {$\young(1,3,5)$};
	\fill [] (12,9) node {$\young(2,3,4)$};
	\fill [] (12,-9) node {$\young(1,3,4)$};
	\fill [] (-9,0) node {$\young(1,2,5)$};
	\fill [] (-3,0) node {$\young(1,2,4)$};
	\fill [] (-6,3) node {$\young(1,2,3)$};
	\fill [] (6,3) node {$\young(2,4,5)$};
	\fill [] (9,0) node {$\young(3,4,5)$};
	\fill [] (6,-3) node {$\young(1,4,5)$};
	\draw [shorten >=8mm,shorten <=8mm,->] (-12,-9) -- (-12,9)node [pos=0.5, left] {$\psi_{1}$};
	\draw [shorten >=3mm,shorten <=4mm,->] (-12,-9)--(6,-3)node [pos=0.5, above] {$\psi_{3}$};
	\draw [shorten >=8mm,shorten <=8mm,->] (-9,0)--(-12,-9);
	\draw [shorten >=4mm,shorten <=4mm,->] (-3,0) -- (12,-9)node [pos=0.5, below] {$\psi_{2}$};
	\draw [shorten >=3mm,shorten <=3mm,->] (12,-9) -- (-12,-9)node [pos=0.5, above] {$\psi_{4}$};
	\draw [shorten >=8mm,shorten <=8mm,->] (12,-9) -- (12,9)node [pos=0.5, right] {$\psi_{1}$};
	\draw [shorten >=5mm,shorten <=5mm,->] (6,-3) -- (12,-9)node [pos=0.5, above] {$-\psi_{3}$};
	\draw [shorten >=3mm,shorten <=3mm,->] (12,9) -- (-12,9) node [pos=0.5, above] {$\psi_{4}$};
	\draw [shorten >=4mm,shorten <=4mm,->] (12,9) -- (-3,0)node [pos=0.5, above] {$-\psi_{2}$};
	\draw [shorten >=3mm,shorten <=4mm,->] (-12,9) -- (6,3)node [pos=0.5, above] {$\psi_{3}$};
	\draw [shorten >=8mm,shorten <=8mm,->] (-12,9) -- (-9,0)node [pos=0.5, right] {$-\psi_{2}$};
	\draw [shorten >=5mm,shorten <=5mm,->] (6,3) -- (12,9)node [pos=0.5, right] {$-\psi_{3}$};
	\draw [shorten >=1mm,shorten <=1mm,->] (-3.5,-1.7) arc (-34:-145:3) node [pos=0.5, below] {$\psi_{4}$};
	\draw [shorten >=1mm,shorten <=1mm,->] (-8.6,1.6) arc (-213:-265:3) node [pos=0.5, above] {$\psi_{3}$};
	\draw [shorten >=1mm,shorten <=1mm,->] (-5.5,3) arc (-280:-330:3) node [pos=0.5, above] {$\psi_{3}$};
	\draw [shorten >=1mm,shorten <=1mm,->] (5.6,-3) arc (-98:-260:3)node [pos=0.5,left] {$\psi_{1}$};
	\draw [shorten >=1mm,shorten <=1mm,->] (6.5,3) arc (-278:-328:3)node [pos=0.5,above] {$\psi_{2}$};
	\draw [shorten >=1mm,shorten <=1mm,->] (8.6,-1.6) arc (-32:-82:3)node [pos=0.5,above] {$\psi_{2}$};
	\end{tikzpicture}
	\]
\end{ex}

\subsection{Case II: $S_{((n-m),(1^m))}$ with $\kappa_2\not\equiv{\kappa_1-1}\Mod{e}$ and $n\equiv{\kappa_2-\kappa_1+1}\Mod{e}$}

We claim that the images of the homomorphisms $\gamma_m$, which are generated by $v(1,\dots,m,n)\in{S_{((n-m-1),(1^{m+1}))}}$, appear as composition factors of Specht modules labelled by hook bipartitions. We first determine the irreducibility of these $\mathscr{H}_n^{\Lambda}$-modules.

\begin{thm}\label{imgam}
Suppose that $\kappa_2\not\equiv{\kappa_1-1}\Mod{e}$ and $n\equiv{\kappa_2-\kappa_1+1}\Mod{e}$. Then $\im(\gamma_m)$ is an irreducible $\mathscr{H}_n^{\Lambda}$-module for all $m\in\{0,\dots,n-1\}$.
\end{thm}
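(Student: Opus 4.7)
The plan is to mimic the strategy of \cref{thm:case1comp}: show that any non-zero submodule of $\im(\gamma_m)$ contains a standard basis vector, and then use the $\mathscr{H}_n^{\Lambda}$-action to push that basis vector to a chosen generator of $\im(\gamma_m)$.

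By \cref{imker}(1)(a), the standard basis of $\im(\gamma_m)$ consists precisely of the vectors $v(a_1,\dots,a_m,n)$ with $1\leqslant a_1<\cdots<a_m<n$, regarded as basis vectors of $S_{((n-m-1),(1^{m+1}))}$. In particular $v(1,\dots,m,n)=\gamma_m(z_{((n-m),(1^m))})$ lies in $\im(\gamma_m)$, and because $\gamma_m$ is an $\mathscr{H}_n^{\Lambda}$-homomorphism with $z_{((n-m),(1^m))}$ generating $S_{((n-m),(1^m))}$, the vector $v(1,\dots,m,n)$ generates $\im(\gamma_m)$ as an $\mathscr{H}_n^{\Lambda}$-module. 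It therefore suffices to show that every non-zero submodule $M\subseteq\im(\gamma_m)$ contains $v(1,\dots,m,n)$.

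Given such an $M$, I would apply \cref{lem:lincomb} (with $m$ replaced by $m+1$) to the ambient Specht module $S_{((n-m-1),(1^{m+1}))}$ to produce a standard basis vector inside $M$. Since that vector lies in $\im(\gamma_m)$, it must take the form $v(a_1,\dots,a_m,n)$. Now let $i\in\{1,\dots,m\}$ be minimal with $a_i>i$; if no such $i$ exists we are done, since then $v(a_1,\dots,a_m,n)=v(1,\dots,m,n)$. Otherwise, because $\kappa_2\not\equiv\kappa_1-1\Mod{e}$, \cref{cor:irr}(1) (applied with leg length $m+1$) supplies some $x\in\scaleobj{1.2}{\Psi}$ with
\[xv(1,\dots,i-1,a_i,a_{i+1},\dots,a_m,n)=v(1,\dots,i,a_{i+1},\dots,a_m,n).\]
Since $i\leqslant m$, the terminal entry $n$ is untouched, so the resulting vector still lies in $\im(\gamma_m)\cap M$. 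Iterating this step drives the first $m$ entries up to $1,2,\dots,m$ while preserving the final $n$, ultimately producing $v(1,\dots,m,n)\in M$. By the generation statement above, this forces $M=\im(\gamma_m)$.

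The only technical point requiring genuine care is the observation emphasised above: each application of \cref{cor:irr}(1) must fix the terminal entry $n$. This is automatic from the form of its conclusion, which alters only the prefix $a_1,\dots,a_i$ and leaves the suffix $a_{i+1},\dots$ unchanged, so restricting to $i\leqslant m$ keeps us inside $\im(\gamma_m)$ throughout the induction. No additional case analysis is required, as the combinatorial machinery developed in \cref{irr} and \cref{cor:irr} was set up in precisely this generality; the hypothesis $n\equiv\kappa_2-\kappa_1+1\Mod{e}$ is needed only to ensure that $\gamma_m$ exists and has the claimed image, and plays no further role in the irreducibility argument itself.
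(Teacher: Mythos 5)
Your proof is correct and follows essentially the same route as the paper's, which likewise invokes \cref{imker}(1)(a), \cref{lem:lincomb}, and part one of \cref{cor:irr} and defers the details to the argument of \cref{thm:case1comp}. You spell out the one technical point the paper leaves implicit---that the iterated applications of \cref{cor:irr}(1) with $i\leqslant m$ never disturb the terminal entry $n$, so the process stays inside $\im(\gamma_m)$---which is exactly the right thing to check.
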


\begin{proof}
	It follows from \cref{imker} part $1$(a) that an arbitrary element of $\im(\gamma_m)$ is of the form $v(a_1,\dots,a_m,n)$. Similarly to the proof of \cref{thm:case1comp}, we use \cref{lem:lincomb} and part one of \cref{cor:irr} to show that $v(a_1,\dots,a_m,n)$ generates $\im(\gamma_m)$.
\end{proof}

An immediate consequence of this theorem, together with part one of \cref{exact}, is the following result.

\begin{cor}\label{thm:case2comp}
Suppose that $\kappa_2\not\equiv{\kappa_1-1}\Mod{e}$ and $n\equiv{\kappa_2-\kappa_1+1}\Mod{e}$. Then $S_{((n-m),(1^m))}$ has the composition series \[0\subset\im(\gamma_{m-1})\subset S_{((n-m),(1^m))}\]
for all $m\in\{1,\dots,n-1\}$, where $S_{((n-m),(1^m))}/\im(\gamma_{m-1})\cong\im(\gamma_m)$.
\end{cor}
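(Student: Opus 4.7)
The plan is to deduce the corollary directly from the exactness of the sequence
\[
0 \longrightarrow S_{((n),\varnothing)} \xrightarrow{\gamma_0} S_{((n-1),(1))} \xrightarrow{\gamma_1} \cdots \xrightarrow{\gamma_{n-1}} S_{(\varnothing,(1^n))} \longrightarrow 0
\]
given in part 1 of \cref{exact}, together with the irreducibility statement just proved in \cref{imgam}. So essentially no new computation is required; the corollary is a two-line assembly of these two facts.

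Fix $m\in\{1,\dots,n-1\}$. Exactness of the sequence at the $m^{\text{th}}$ term gives $\ker(\gamma_m)=\im(\gamma_{m-1})$, so we have an inclusion $\im(\gamma_{m-1})\subseteq S_{((n-m),(1^m))}$. I would first check that both inclusions
\[
0\ \subsetneq\ \im(\gamma_{m-1})\ \subsetneq\ S_{((n-m),(1^m))}
\]
are strict. Strictness of the left inclusion follows from the non-vanishing of $\gamma_{m-1}$: by \cref{prop:homs}, $\gamma_{m-1}(z_{((n-m+1),(1^{m-1}))})=v(1,\dots,m-1,n)$, which is a standard basis vector and hence non-zero. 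Strictness on the right follows from the non-vanishing of $\gamma_m$ by the same proposition (so $\ker(\gamma_m)\neq S_{((n-m),(1^m))}$).

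Next, applying the First Isomorphism Theorem to $\gamma_m$ gives
\[
S_{((n-m),(1^m))}/\im(\gamma_{m-1}) \;=\; S_{((n-m),(1^m))}/\ker(\gamma_m) \;\cong\; \im(\gamma_m),
\]
which is the required isomorphism. By \cref{imgam}, both $\im(\gamma_{m-1})$ (since $m-1\in\{0,\dots,n-2\}$) and $\im(\gamma_m)$ (since $m\in\{1,\dots,n-1\}$) are irreducible $\mathscr{H}_n^{\Lambda}$-modules, so the chain $0\subset\im(\gamma_{m-1})\subset S_{((n-m),(1^m))}$ has irreducible composition factors and is therefore a composition series.

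There is essentially no obstacle: the only things to verify are the strict inclusions, which reduce to observing that the explicit images $\gamma_{m-1}(z_{((n-m+1),(1^{m-1}))})$ and $\gamma_m(z_{((n-m),(1^m))})$ are non-zero standard basis vectors of their respective Specht modules.
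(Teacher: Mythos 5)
Your proof is correct and is exactly the argument the paper has in mind: the paper simply states that the corollary follows immediately from Theorem~\ref{imgam} and part one of Lemma~\ref{exact}, and you have unpacked this by noting $\ker(\gamma_m)=\im(\gamma_{m-1})$, invoking the First Isomorphism Theorem, and citing Theorem~\ref{imgam} for irreducibility of the two factors. The only extra care you took — checking the strictness of the inclusions via the nonvanishing of $\gamma_{m-1}$ and $\gamma_m$ from Proposition~\ref{prop:homs} — is a minor but worthwhile verification that the paper leaves implicit.
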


\begin{ex}
	Let $e=3$ and $\kappa=(0,1)$. By \cref{thm:case2comp}, $S_{((2),(1^3))}$ has the composition series $0\subset\im(\gamma_2)\subset{S_{((2),(1^3))}}$, where $S_{((2),(1^3))}/\im(\gamma_2)\cong\im(\gamma_3)$.
	We know from \cref{imker} that the basis elements of $\im(\gamma_2)=\left\langle{z_{((2),(1^3))}}\right\rangle$ correspond to the $((2),(1^3))$-tableaux
	\[
	\gyoung(4!\gr5,,!\wh1,2,3),\ \gyoung(3!\gr5,,!\wh1,2,4),\ \gyoung(2!\gr5,,!\wh1,3,4),\ \gyoung(1!\gr5,,!\wh2,3,4),
	\]
	and the basis elements of $\im(\gamma_3)=\left\langle{\psi_{4}\psi_{3}z_{((2),(1^3))}}\right\rangle$ correspond to the $((2),(1^3))$-tableaux
	\[
	\gyoung(34,,1,2,!\gr5),\ \gyoung(24,,1,3,!\gr5),\ \gyoung(23,,1,4,!\gr5),\ \gyoung(14,,2,3,!\gr5),\ \gyoung(13,,2,4,!\gr5),\ \gyoung(12,,3,4,!\gr5).
	\]
	
	Observe that for any $v_\ttr,v_\tts\in\im(\gamma_2)$ and $v_{\ttt},v_{\ttu}\in\im(\gamma_3)$ we can find a directed path from $\ttr$ to $\tts$ and from $\ttt$ to $\ttu$, respectively, as follows, where the basis elements of $S_{((2),(1^3))}$ are represented by the legs of the corresponding $((2),(1^3))$-tableaux.
	
	\[
	\begin{tikzpicture}[scale=0.5,>=stealth]
	\fill [] (-8,-4) node {$\young(1,3,4)$};
	\fill [] (-8,4) node {$\young(1,2,4)$};
	\fill [] (-12,0) node {$\young(1,2,3)$};
	\fill [] (-4,0) node {$\young(2,3,4)$};
	\fill [] (0,4) node {$\gyoung(1,2,!\gr5)$};
	\fill [] (0,-4) node {$\gyoung(1,3,!\gr5)$};
	\fill [] (4,0) node {$\gyoung(2,3,!\gr5)$};
	\fill [] (8,0) node {$\gyoung(1,4,!\gr5)$};
	\fill [] (12,4) node {$\gyoung(3,4,!\gr5)$};
	\fill [] (12,-4) node {$\gyoung(2,4,!\gr5)$};
	\draw [dashed,shorten >=3mm,shorten <=3mm,thick,->] (-8,3) -- (-8,-3)node [pos=0.5, left] {$\psi_{2}$};
	\draw [dashed,shorten >=5mm,shorten <=5mm,thick,->] (-12,0) -- (-8,4)node [pos=0.5, left] {$\psi_{3}$};
	\draw [dashed,shorten >=5mm,shorten <=5mm,thick,->] (-8,-4) -- (-12,0)node [pos=0.5, left] {$-\psi_{3}$};
	\draw [dashed,shorten >=5mm,shorten <=5mm,thick,->] (-4,0) -- (-8,4)node [pos=0.5, right] {$-\psi_{1}$};
	\draw [dashed,shorten >=5mm,shorten <=5mm,thick,->] (-8,-4) -- (-4,0)node [pos=0.5, right] {$\psi_{1}$};
	\draw [shorten >=3mm,shorten <=3mm,thick,->] (-8,4)--(0,4)node [pos=0.5, above] {$\psi_{4}$};
	\draw [shorten >=3mm,shorten <=3mm,thick,->] (-4,0)--(4,0)node [pos=0.3, above] {$\psi_{4}$};
	\draw [shorten >=3mm,shorten <=3mm,thick,->] (-8,-4)--(0,-4)node [pos=0.5, above] {$\psi_{4}$};
	\draw [dotted,shorten >=3mm,shorten <=3mm,thick,->] (0,3) -- (0,-3)node [pos=0.7, left] {$\psi_{2}$};
	\draw [dotted,shorten >=3mm,shorten <=3mm,thick,->] (12,3) -- (12,-3)node [pos=0.5, left] {$\psi_{2}$};
	\draw [dotted,shorten >=4mm,shorten <=4mm,thick,->] (4,0) -- (12,-4)node [pos=0.5, below] {$\psi_{3}$};
	\draw [dotted,shorten >=4mm,shorten <=4mm,thick,->] (12,4) -- (4,0)node [pos=0.5, above] {$-\psi_{3}$};
	\draw [dotted,shorten >=4mm,shorten <=4mm,thick,->] (8,0) -- (0,4)node [pos=0.5, above] {$\psi_{3}$};
	\draw [dotted,shorten >=4mm,shorten <=4mm,thick,->] (0,-4) -- (8,0)node [pos=0.5, below] {$\psi_{3}$};
	\end{tikzpicture}
	\]
\end{ex}

\subsection{Case III: $S_{((n-m),(1^m))}$ with $\kappa_2\equiv{\kappa_1-1}\Mod{e}$ and $n\not\equiv{0}\Mod{e}$}

We need only understand the homomorphism $\chi_m$, whose image is generated by $v(2,\dots,m+1)\in S_{((n-m),(1^m))}$, in order to determine the composition factors of $S_{((n-m),(1^m))}$ in this case. Let us first confirm their irreducibility.

\begin{prop}\label{imchi}\label{thm:case3comp}
Suppose that $\kappa_2\equiv{\kappa_1-1}\Mod{e}$ and $n\not\equiv{0}\Mod{e}$. Then $\im(\chi_m)$ and $S_{((n-m),(1^m))}/\im(\chi_m)$ are irreducible $\mathscr{H}_n^{\Lambda}$-modules for all $m\in\{1,\dots,n-1\}$.
\end{prop}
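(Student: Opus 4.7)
The plan is to exploit the exact sequence of \cref{exact}(2): since $\ker(\chi_m)=0$ by \cref{imker}(2)(a)(ii), $\im(\chi_m)\cong S_{((n-m,1^m),\varnothing)}$ and $S_{((n-m),(1^m))}/\im(\chi_m)\cong S_{(\varnothing,(n-m+1,1^{m-1}))}$ via $\tau_m$. I would prove irreducibility of each piece separately by the strategy used for \cref{thm:case1comp} and \cref{imgam}: any non-zero submodule contains a standard basis vector, and any such basis vector generates the whole module.

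For $\im(\chi_m)$, its standard basis consists of the $v(a_1,\dots,a_m)$ with $a_1\geqslant 2$ by \cref{imker}(2)(a)(i), and as a module it is generated by $v(2,3,\dots,m+1)=\chi_m(z_{((n-m,1^m),\varnothing)})$. Given a non-zero submodule $M\subseteq\im(\chi_m)$, \cref{lem:lincomb} produces some $v(a_1,\dots,a_m)\in M$, necessarily with $a_1\geqslant 2$. If $a_1>2$, the unnamed lemma immediately preceding \cref{cor:irr} reduces it to $v(2,a_2,\dots,a_m)$. I would then establish, by inductive residue analysis along the lines of \cref{irr}, an analogue of \cref{cor:irr}(2)(b) with the first leg entry fixed at $2$: for each $i\in\{2,\dots,m\}$ with $a_i>i+1$ and $a_1=2,\,a_2=3,\dots,a_{i-1}=i$, there exists $x\in\scaleobj{1.2}{\Psi}$ such that $xv(2,3,\dots,i,a_i,\dots,a_m)=v(2,3,\dots,i+1,a_{i+1},\dots,a_m)$. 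Iterating over $i$ then yields $v(2,3,\dots,m+1)$, completing this part.

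For $S_{((n-m),(1^m))}/\im(\chi_m)$, it suffices to show that every submodule $N\supsetneq\im(\chi_m)$ of $S_{((n-m),(1^m))}$ equals $S_{((n-m),(1^m))}$. After modifying $v\in N\setminus\im(\chi_m)$ by an element of $\im(\chi_m)\subseteq N$, I may assume $v$ is a non-zero element of $N$ supported only on basis vectors $v(1,a_2,\dots,a_m)$. The proof of \cref{lem:lincomb} via \cref{lincomb} adapts here: by \cref{rel} together with \cref{cor:matrix}, each $\psi_l$ sends a basis vector with $a_1=1$ either to another of the same form, to a basis vector of $\im(\chi_m)\subseteq N$, or to zero, so iterated separation (absorbing $\im(\chi_m)$-valued intermediates into $N$) extracts a single $v(1,a_2,\dots,a_m)\in N$. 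Finally, \cref{cor:irr}(2)(b), applied iteratively for $i=2,3,\dots,m$, reduces this to $v(1,2,\dots,m)=z_{((n-m),(1^m))}$, forcing $N=S_{((n-m),(1^m))}$.

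The main obstacle is establishing the variant of \cref{cor:irr}(2) needed in the first part: \cref{cor:irr}(2)(a) assumes $a_m=n$ (not guaranteed in Case III since $n\not\equiv 0\Mod{e}$) and \cref{cor:irr}(2)(b) assumes $a_1=1$ (impossible inside $\im(\chi_m)$), so neither applies directly. The new reduction, with $a_1=2$ fixed, is to be proved by a case-by-case application of \cref{rel} mirroring the proof of \cref{irr}; but the hypothesis $\kappa_2\equiv\kappa_1-1\Mod{e}$ collapses the three distinguished residue classes $l\equiv\kappa_2-\kappa_1,\,1+\kappa_2-\kappa_1,\,2+\kappa_2-\kappa_1\Mod{e}$ into $l\equiv-1,\,0,\,1\Mod{e}$, so the base-case bookkeeping at the smallest leg indices will need particular care.
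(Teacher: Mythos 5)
Your proposal takes a genuinely different route from the paper. You propose a direct, self-contained irreducibility argument in the style of Cases I, II, and IV: use \cref{lem:lincomb} to extract a standard basis vector from any non-zero submodule, then use a reduction-to-generator argument to show that basis vector generates. The paper's actual proof is far shorter: it uses the exact sequence of \cref{exact}(2) to identify $\im(\chi_m)\cong S_{((n-m,1^m),\varnothing)}$ and $S_{((n-m),(1^m))}/\im(\chi_m)\cong S_{(\varnothing,(n-m+1,1^{m-1}))}$, which are Specht modules concentrated in a single component --- effectively level-$1$ Specht modules labelled by hooks of size $n$ --- and then cites the $v$-analogue of Peel's Theorem \cite[Theorem 1(1)]{CMT}, which says these hook Specht modules are irreducible whenever $e\nmid n$. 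The hypothesis $n\not\equiv 0\Mod{e}$ is exactly what makes Peel's criterion apply, so the entire proposition follows in two sentences without touching the $\psi$-action at all. What the paper's approach buys is economy and reuse of known results; what yours would buy, if completed, is a uniform self-contained treatment across all four cases. The tradeoff is that the hands-on route is substantially more work here than the paper's.

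The obstacle you flag is real and would need to be resolved before your argument is complete. For $\im(\chi_m)$ you need a reduction along vectors $v(2,3,\dots,i,a_i,\dots,a_m)$, which is not covered by \cref{irr} (whose hypothesis has leg entries $1,\dots,i-1$ below $a_i$) nor by either part of \cref{cor:irr}(2). Producing that variant would require re-running the case analysis of \cref{irr} with the leg shifted by one, and the residue bookkeeping at small indices is genuinely delicate in Case III since $\kappa_2-\kappa_1\equiv -1$, $1+\kappa_2-\kappa_1\equiv 0$, and $2+\kappa_2-\kappa_1\equiv 1\Mod{e}$ all collide with the smallest possible values of $l$. For the quotient, your adaptation of \cref{lem:lincomb} also needs care: \cref{lincomb} guarantees an $x$ such that exactly one of $xv_{\tts}$, $xv_{\ttt}$ vanishes \emph{in $S_{((n-m),(1^m))}$}, but for the quotient argument you need exactly one to vanish \emph{modulo} $\im(\chi_m)$; when $x=\psi_1$ acts on vectors with $a_1=1$ it can push both into $\im(\chi_m)$, so you would have to check that the separating $x$ produced by \cref{lincomb} for tableaux with $a_1=1$ never does this (or argue around it). None of these issues is obviously fatal --- the statements you need are true, since the target modules are irreducible --- but proving them would essentially duplicate, in elementary form, the content of Peel's theorem that the paper simply cites.
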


\begin{proof}
We know that $\chi_m$ is injective and $\tau_m$ is surjective by the exact sequence given in part two of \cref{exact}, so that $\im(\chi_m)\cong{S_{((n-m,1^m),\varnothing)}}$ and $\im(\tau_m)\cong{S_{(\varnothing,(1^{n-m+1,1^{m-1}}))}}$. By appealing to the $v$-analogue of Peel's Theorem \cite[Theorem $1(1)$]{CMT}, $S_{(n-m,1^m)}$ and $S_{(n-m+1,1^{m-1})}$ are both irreducible, and hence, so are $S_{((n-m,1^m),\varnothing)}$ and $S_{(\varnothing,(1^{n-m+1,1^{m-1}}))}$.
Thus, $\im(\chi_m)$ and $S_{((n-m),(1^m))}/\im(\chi_m)$ are irreducible, as required.
\end{proof}

Hence, for $1\leqslant{m}\leqslant{n-1}$, it is immediately obvious that $0\subset\im(\chi_m)\subset S_{((n-m),(1^m))}$ is a composition series for $S_{((n-m),(1^m))}$ when $\kappa_2\equiv{\kappa_1-1}\Mod{e}$ and $n\not\equiv{0}\Mod{e}$.

\begin{ex}
	Let $e=3$ and $\kappa=(0,2)$. It follows from \cref{thm:case3comp} that $S_{((3),(1^2))}$ has the composition series
	\[0\subset\im(\chi_2)\subset{S_{((3),(1^2))}}.\]
	By \cref{imker}, the basis elements of $\im(\chi_2)$ correspond to the $((3),(1^2))$-tableaux
	\[\gyoung(345,,!\gr1,!\wh2),\ \gyoung(245,,!\gr1,!\wh3),\ \gyoung(235,,!\gr1,!\wh4),\ \gyoung(234,,!\gr1,!\wh5),\]
	and the basis elements of $S_{((3),(1^2))}/\im(\chi_2)$ correspond to the $((3),(1^2))$-tableaux
	\[\gyoung(!\gr1!\wh45,,2,3),\ \gyoung(!\gr1!\wh35,,2,4),\ \gyoung(!\gr1!\wh34,,2,5),\ \gyoung(!\gr1!\wh25,,3,4),\ \gyoung(!\gr1!\wh24,,3,5),\ \gyoung(!\gr1!\wh23,,4,5)\:.\]
	
	Observe that for any $v_\ttr,v_\tts\in\im(\chi_2)$ and $v_{\ttt},v_{\ttu}\in S_{((3),(1^2))}/\im(\chi_3)$ we can find a directed path from $\ttr$ to $\tts$ and from $\ttt$ to $\ttu$, respectively, as follows, where the basis elements of $S_{((3),(1^2))}$ are represented by the legs of the corresponding $((3),(1^2))$-tableaux.
	
	\[
	\begin{tikzpicture}[scale=0.5,>=stealth]
	\fill [] (-12,0) node {$\gyoung(!\gr1,!\wh2)$};
	\fill [] (-8,4) node {$\gyoung(!\gr1,!\wh3)$};
	\fill [] (-8,-4) node {$\gyoung(!\gr1,!\wh4)$};
	\fill [] (-4,0) node {$\gyoung(!\gr1,!\wh5)$};
	\fill [] (0,4) node {$\young(2,3)$};
	\fill [] (0,-4) node {$\young(2,4)$};
	\fill [] (4,0) node {$\young(2,5)$};
	\fill [] (8,0) node {$\young(3,4)$};
	\fill [] (12,4) node {$\young(3,5)$};
	\fill [] (12,-4) node {$\young(4,5)$};
	\draw [dashed,shorten >=1mm,shorten <=1mm,thick,->] (-8,3) -- (-8,-3)node [pos=0.5, left] {$\psi_{3}$};
	\draw [dashed,shorten >=4mm,shorten <=4mm,thick,->] (-12,0) -- (-8,4)node [pos=0.5, left] {$\psi_{2}$};
	\draw [dashed,shorten >=4mm,shorten <=4mm,thick,->] (-8,-4) -- (-12,0)node [pos=0.5, left] {$\psi_{2}$};
	\draw [dashed,shorten >=4mm,shorten <=4mm,thick,->] (-4,0) -- (-8,4)node [pos=0.5, left] {$\psi_{4}$};
	\draw [dashed,shorten >=4mm,shorten <=4mm,thick,->] (-8,-4) -- (-4,0)node [pos=0.5, left] {$\psi_{4}$};
	\draw [dotted,shorten >=1mm,shorten <=1mm,thick,->] (0,3) -- (0,-3)node [pos=0.3, left] {$\psi_{3}$};
	\draw [dotted,shorten >=4mm,shorten <=4mm,thick,->] (0,-4) -- (4,0)node [pos=0.5, left] {$\psi_{4}$};
	\draw [dotted,shorten >=4mm,shorten <=4mm,thick,->] (4,0) -- (0,4)node [pos=0.5, left] {$\psi_{4}$};
	\draw [dotted,shorten >=4mm,shorten <=4mm,thick,->] (8,0) -- (12,4)node [pos=0.5, left] {$\psi_{4}$};
	\draw [dotted,shorten >=4mm,shorten <=4mm,thick,->] (12,-4) -- (8,0)node [pos=0.5, left] {$-\psi_{4}$};
	\draw [dotted,shorten >=1mm,shorten <=1mm,thick,->] (12,3) -- (12,-3)node [pos=0.5, left] {$\psi_{3}$};
	\draw [dotted,shorten >=4mm,shorten <=4mm,thick,->] (0,-4) -- (8,0)node [pos=0.5, below] {$\psi_{2}$};
	\draw [dotted,shorten >=4mm,shorten <=4mm,thick,->] (12,-4) -- (4,0)node [pos=0.5, left] {$\psi_{2}$};
	\draw [dotted,shorten >=4mm,shorten <=4mm,thick,->] (8,0) -- (0,4)node [pos=0.5, left] {$-\psi_{2}$};
	\draw [dotted,shorten >=4mm,shorten <=4mm,thick,->] (4,0) -- (12,4)node [pos=0.5, left] {$\psi_{2}$};
	\draw [shorten >=3mm,shorten <=3mm,thick,->] (-8,4) -- (0,4)node [pos=0.5, below] {$\psi_{1}$};
	\draw [shorten >=3mm,shorten <=3mm,thick,->] (-4,0) -- (4,0)node [pos=0.3, below] {$\psi_{1}$};
	\draw [shorten >=3mm,shorten <=3mm,thick,->] (-8,-4) -- (0,-4)node [pos=0.5, below] {$\psi_{1}$};
	\end{tikzpicture}
	\]	
\end{ex}

\subsection{Case IV: $S_{((n-m),(1^m))}$ with $\kappa_2\equiv{\kappa_1-1}\Mod{e}$ and $n\equiv{0}\Mod{e}$}

The structure of $S_{((n-m),(1^m))}$ is more complicated in this case than the other three cases; each Specht module has either three or four composition factors (except for the irreducible Specht modules $S_{((n),\varnothing)}$ and $S_{(\varnothing,(1^n))}$). We determine the irreducibility of the $\mathscr{H}_n^{\Lambda}$-modules $\im (\phi_m)$	
and $\ker (\gamma_m)/\im (\phi_m)$, which are generated by $v(2,\dots,m,n)\in S_{((n-m),(1^m))}$ and $v(1,\dots,m-1,n)\in S_{((n-m),(1^m))}$, respectively.

\begin{prop}\label{prop:case4}
Suppose that $\kappa_2\equiv{\kappa_1-1}\Mod{e}$ and $n\equiv{0}\Mod{e}$. Then
\begin{enumerate}
	\item $\im (\phi_m)$ is an irreducible $\mathscr{H}_n^{\Lambda}$-module for all $m\in\{1,\dots,n-1\}$, and
	\item $\ker(\gamma_m)/\im(\phi_m)$	
	is an irreducible $\mathscr{H}_n^{\Lambda}$-module for all $m\in\{2,\dots,n-1\}$.
\end{enumerate}
\end{prop}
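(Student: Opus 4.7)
The plan is to mirror the combinatorial strategy used in \cref{thm:case1comp,imgam,thm:case3comp}, combining \cref{lem:lincomb} (any non-zero submodule of $S_{((n-m),(1^m))}$ contains a standard basis vector) with \cref{cor:irr} (which supplies explicit $\mathscr{H}_n^{\Lambda}$-elements reducing one standard basis vector to another). The argument handles the two parts in turn; the boundary cases $m=1$ and $m=n-1$ in part~$1$, and $m=2$ and $m=n-1$ in part~$2$, are immediate because \cref{imker} shows the relevant modules are one-dimensional.

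For part~$1$, I would take a non-zero submodule $M \subseteq \im(\phi_m)$ and invoke \cref{lem:lincomb} to extract a standard basis vector $v_\ttt \in M$. By part~$3$(c) of \cref{imker}, $v_\ttt$ has the form $v(a_1,\dots,a_{m-1},n)$ with $a_1 \geqslant 2$. I then apply part~$2$(a) of \cref{cor:irr} iteratively: at each stage, the least index $i \in \{1,\dots,m-1\}$ with $a_i > i+1$ produces an element of $\mathscr{H}_n^{\Lambda}$ sending $v(2,\dots,i,a_i,\dots,a_{m-1},n)$ to $v(2,\dots,i+1,a_{i+1},\dots,a_{m-1},n)$. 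Finitely many iterations yield $v(2,3,\dots,m,n) = \phi_m(z_{((n-m+1,1^{m-1}),\varnothing)})$, the distinguished generator of $\im(\phi_m)$, whence $M = \im(\phi_m)$.

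For part~$2$, set $Q := \ker(\gamma_m)/\im(\phi_m)$. Parts~$1$(b) and~$3$(c) of \cref{imker} together identify the vector-space basis of $Q$ as $\{\overline{v(1,a_2,\dots,a_{m-1},n)}\}$, namely exactly those $v_\ttt$ with $a_1 = 1$ (equivalently $\ttt(1,1,2)=1$) and $a_m = n$. Exactness of the $\gamma$-sequence (part~$1$ of \cref{exact}, applicable since $n \equiv \kappa_2-\kappa_1+1\Mod{e}$) gives $\ker(\gamma_m) = \im(\gamma_{m-1}) = \mathscr{H}_n^{\Lambda} \cdot v(1,2,\dots,m-1,n)$, so $Q$ is cyclically generated by $\overline{v(1,2,\dots,m-1,n)}$. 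Now let $M$ be a submodule of $\ker(\gamma_m)$ strictly containing $\im(\phi_m)$, and pick $v \in M \setminus \im(\phi_m)$. Since $\im(\phi_m) \subseteq M$ is spanned by precisely the basis vectors of $\ker(\gamma_m)$ with $a_1 \geqslant 2$, I may subtract the ``$a_1 \geqslant 2$ part'' of $v$ (which lies in $\im(\phi_m) \subseteq M$) to assume $v = \sum_{i=1}^{r} c_i v_{\ttt_i}$ with every $\ttt_i$ having $a_1=1$ and $c_i \neq 0$. Mirroring \cref{lem:lincomb}, take $r$ minimal for such a representation of an element of $M \setminus \im(\phi_m)$: if $r \geqslant 2$ then \cref{lincomb} supplies $x \in \mathscr{H}_n^{\Lambda}$ distinguishing two of the $v_{\ttt_i}$, and after again discarding the $a_1 \geqslant 2$ part of $xv$, this contradicts minimality unless $r=1$. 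The critical observation is that two distinct basis vectors $v_{\ttt_i}, v_{\ttt_j}$ both having $a_1=1$ first disagree at some maximal index $r \geqslant 2$ where $a_r, b_r \geqslant 3$ (because the strict increases $b_1 < \cdots < b_r$ and $a_1 < \cdots < a_r$ with $b_1 = a_1 = 1$ and $a_r > b_r$ force $b_r \geqslant r \geqslant 2$ and $a_r \geqslant r+1 \geqslant 3$), so all $\psi_l$ used in the proof of \cref{lincomb} for such pairs satisfy $l \geqslant 2$; inspection of \cref{rel} then confirms such operators preserve $a_1=1$. This produces a single $v_\ttt \in M$ with $a_1=1$, and a final iterated application of part~$2$(b) of \cref{cor:irr} (over $i=2,3,\dots,m-1$) reduces $v_\ttt$ to $v(1,2,\dots,m-1,n)$, whence $M \supseteq \ker(\gamma_m)$.

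I expect the delicate step to be the case-analysis verification in part~$2$: one must check against the case breakdown in the proof of \cref{lincomb} that the distinguishing operators really only involve $\psi_l$ with $l \geqslant 2$ when both inputs have $a_1=1$, and then reread \cref{rel} to confirm such operators never take a basis vector with $a_1=1$ to one with $a_1 \neq 1$. An alternative route, cleaner but less self-contained, would combine injectivity of $\chi_m$ with exactness of part~$3$ of \cref{exact} to identify $\im(\phi_m) \cong \im(\alpha_{m-1})$, and (via the map induced by $\tau_m$) $\ker(\gamma_m)/\im(\phi_m) \cong \im(\beta_{m-2})$, reducing part~$2$ to the irreducibility of these submodules of $S_{((n-m,1^m),\varnothing)}$ and $S_{(\varnothing,(n-m,1^m))}$, which would in turn follow from a level-$2$ analogue of Peel's theorem.
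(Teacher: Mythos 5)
Your proof of part~1 is essentially the paper's: extract a basis vector via \cref{lem:lincomb}, identify it in $\im(\phi_m)$ via \cref{imker}~3(c), and climb to the generator $v(2,\dots,m,n)$ via \cref{cor:irr}~2(a). Your proof of part~2, however, takes a genuinely different route, and it is worth comparing. The paper handles the linear-combination step by introducing an auxiliary injective linear map $\eta\colon S_{((n-m),(1^{m-1}))}\to S_{((n-m),(1^m))}$, $v(a_1,\dots,a_{m-1})\mapsto v(1,a_1+1,\dots,a_{m-1}+1)$, where the source Specht module is taken over $\mathscr{H}_{n-1}^{\Lambda}$ with a shifted multicharge, and checks that $\eta$ intertwines with the shift homomorphism $\operatorname{shift}\colon\mathscr{H}_{n-1}\to\mathscr{H}_n$. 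The separating operator $x$ produced by \cref{lincomb} in the smaller algebra is then pushed forward through $\operatorname{shift}$, and the image automatically involves only $\psi_l$ with $l\geqslant 2$ (or an idempotent $e(\kappa_1-1,\mathbf{i})$), so it trivially preserves the condition $a_1=1$. You instead avoid the shift machinery and argue directly: since any two distinct basis vectors of $\ker(\gamma_m)\setminus\im(\phi_m)$ share $a_1=b_1=1$ and (because of $\ker(\gamma_m)$) $a_m=b_m=n$, the maximal disagreement index $r$ in the proof of \cref{lincomb} satisfies $r\geqslant 2$ and $a_r\geqslant r+1\geqslant 3$, so every operator that proof produces is $e(\mathbf{i})$, $\psi_l$, $\psi_l^2$, or $\psi_{l-1}\psi_l$ with $l\geqslant 3$ (subscripts $\geqslant 2$); inspecting \cref{rel} confirms these fix $a_1=1$. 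This is a correct and in fact more self-contained argument; the price is the explicit case-check against \cref{lincomb} and \cref{rel}, which the paper replaces with the cleaner shift-intertwiner formalism. The subsequent induction on $r$ (subtract the $\im(\phi_m)$-part, apply $x$, note the result is non-zero by \cref{cor:matrix} and still has only $a_1=1$ terms so lies outside $\im(\phi_m)$, contradiction) is valid and mirrors the paper's inductive structure.

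One small inaccuracy: you assert that the boundary case $m=n-1$ in part~2 is immediate because $\ker(\gamma_{n-1})/\im(\phi_{n-1})$ is one-dimensional. In fact, from \cref{imker}~1(b) and 3(c)(ii), $\ker(\gamma_{n-1})$ has dimension $n-1$ and $\im(\phi_{n-1})$ has dimension~$1$, so the quotient has dimension $n-2$, which is generically $>1$. This does not matter for the proof, since your general argument via \cref{lincomb} and \cref{cor:irr}~2(b) covers $m=n-1$ anyway, but the remark should be removed. (The cases $m=1$, $m=n-1$ in part~1 and $m=2$ in part~2 are indeed one-dimensional.)
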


\begin{proof}
	\begin{enumerate}
		\item{
			It follows from \cref{imker} part $3(c)$ that an arbitrary element of $\im(\phi_m)$ is of the form $v(a_1,\dots,a_{m-1},n)$ with $a_1>1$. Similarly to the proof of \cref{thm:case1comp}, we use \cref{lem:lincomb} and part 2(a) of \cref{cor:irr} to show that $v(a_1,\dots,a_m,n)$ generates $\im(\phi_m)$.}
		\item{
			Suppose that $0\neq v\in\ker(\gamma_m)\backslash\im(\phi_m)$. Then $v\equiv\alpha_1v_{{\ttt}_1}+\dots+\alpha_rv_{{\ttt}_r}\Mod{\im(\phi_m)}$ for some $r\geqslant{1}$, $\alpha_1,\dots,\alpha_r\in\mathbb{F}\backslash\{0\}$ and $v_{{\ttt}_1},\dots,v_{{\ttt}_r}\in\ker(\gamma_m)\backslash\im(\phi_m)$. We proceed by induction on $r$ to show that $v+\im(\phi_m)$ generates $\ker(\gamma_m)/\im(\phi_m)$.
			
			Let $r=1$. We know from \cref{imker} that an arbitrary element of $\ker(\gamma_m)$ is of the form $v(a_1,\dots,a_{m-1},n)$. We thus write $v\equiv \alpha v(a_1,\dots,a_{m-1},n)\Mod{\im(\phi_m)}$ for some ${\alpha}\in\mathbb{F}\backslash\{0\}$ and observe from \cref{imker} part $3(c)$ that $v\not\in\im(\phi_m)$ whenever $a_1=1$. We know that if $a_i=i$ for all $i\in\{1,\dots,m-1\}$, then $v(1,\dots,m,n)$ generates $\ker(\gamma_m)$. Now let $i$ be minimal such that $a_i>i$, for some $i\in\{2,\dots,m-1\}$, and proceed by downwards induction on $i$. By part 2(b) of \cref{cor:irr}, there exists $x\in\scaleobj{1.2}{\Psi}$ such that
			\[x v(1,\dots,i-1,a_i,\dots,a_{m-1},n)\equiv v(1,\dots,i,a_{i+1},\dots,a_{m-1},n)\Mod{\im(\phi_m)}.\]
			By induction, $v(1,\dots,i,a_{i+1},\dots,a_{m-1},n)+\im(\phi_m)$ generates $\ker(\gamma_m)/\im(\phi_m)$, and thus, so does $v+\im(\phi_m)$.
			
			We now let $r=2$, so that $v\equiv \alpha_1 v_{\ttt_1}+\alpha_2 v_{\ttt_2}\Mod{\im(\phi_m)}$. We now define the vector space homomorphism
			\begin{align*}
			\eta:S_{((n-m),(1^{m-1}))}&\longrightarrow S_{((n-m),(1^m))},\\
			v(a_1,\dots,a_{m-1})&\longmapsto v(1,a_1+1,a_2+1,\dots,a_{m-1}+1).
			\end{align*}
			We claim that $\eta$ satisfies $\eta(xv(a_1,\dots,a_{m-1}))=\operatorname{shift}(x)\eta(v(a_1,\dots,a_{m-1}))$ for $x=e(\mathbf{i})$ or $x\in\scaleobj{1.2}{\Psi}$, when $S_{\left((n-m),(1^{m-1})\right)}$ is defined over $\mathscr{H}_{n-1}^{\Lambda}$ with $e$-multicharge $\kappa=(\kappa_1,\kappa_1-2)$.
			For $\mathbf{i}=(i_1,\dots,i_{n-1})\in I^{n-1}$, observe that
			\[
			\operatorname{shift}(e(\mathbf{i}))
			=e(\mathbf{i})^{+1}
			=\sum_{j_1\in I}
			e(j_1,i_1,\dots,i_{n-1}),
			\]
			where $e(j_1,i_1,\dots,i_{n-1})v(1,a_1+1,a_2+1,\dots,a_{m-1}+1)\neq 0$ if and only if $j_1\equiv\kappa_1-1\Mod{e}$. Hence
			\begin{align*}
			\eta(e(\mathbf{i})v(a_1,a_2,\dots,a_{m-1}))
			&=\eta (v(a_1,a_2,\dots,a_{m-1}))\\
			&= v(1,a_1+1,a_2+1,\dots,a_{m-1}+1)\\
			&= e(\kappa_1-1,i_1,\dots,i_{n-1})
			v(1,a_1+1,a_2+1,\dots,a_{m-1}+1)\\
			&=\sum_{j_1\in I}
			e(j_1,i_1,\dots,i_{n-1})
			v(1,a_1+1,a_2+1,\dots,a_{m-1}+1)\\
			&=\operatorname{shift}(e(\mathbf{i}))\eta(v(a_1,a_2,\dots,a_{m-1})).
			\end{align*}
			
			Now observe that $\mathbf{i}_{\ttt_{\left((n-m),(1^m)\right)}}=(\kappa_1-1,\mathbf{i}_{\ttt_{\left((n-m),(1^{m-1})\right)}})$. For all $x=\alpha\psi_{r_1}\psi_{r_2}\dots\psi_{r_k}\in\scaleobj{1.2}{\Psi}$, it thus follows that
			\begin{align*}
			\eta(xv(a_1,a_2,\dots,a_{m-1}))
			&= \eta\left(\alpha\psi_{r_1}\psi_{r_2}\dots\psi_{r_k}\Psid{a_1-1}{1}\Psid{a_2-1}{2}\dots\Psid{a_{m-1}-1}{m-1}z_{((n-m),(1^{m-1}))}\right)\\
			&= \eta\left(\alpha\Psid{b_1-1}{1}\Psid{b_2-1}{2}\dots\Psid{b_{m-1}-1}{m-1}z_{((n-m),(1^{m-1}))}\right)\quad\left(\text{for some $b_i$}\right)\\
			&= \alpha\Psid{b_1}{2}\Psid{b_2}{3}\dots\Psid{b_{m-1}}{m}z_{((n-m),(1^m))}
			\\
			&= \alpha\psi_{r_1+1}\psi_{r_2+1}\dots\psi_{r_k+1}
			\Psid{a_1}{2}\Psid{a_2}{3}\dots\Psid{a_{m-1}}{m}z_{((n-m),(1^m))}\\
			&= \operatorname{shift}(\alpha\psi_{r_1}\psi_{r_2}\dots\psi_{r_k})
			v(1,a_1+1,a_2+1,\dots,a_{m-1}+1)\\
			&= \operatorname{shift}(x)\eta(v(a_1,a_2,\dots,a_{m-1})),
			\end{align*}
			which proves the claim.
			
			Now, we know that $\eta(v_{{\tts}_1})=v_{\ttt_1}$ and $\eta(v_{{\tts}_2})=v_{\ttt_2}$ for some $v_{{\tts}_1},v_{{\tts}_2}\in S_{((n-m),(1^{m-1}))}$.	We know from \cref{lincomb} that there exists $x\in\mathscr{H}_{n-1}^{\Lambda}$ either lying in $\scaleobj{1.2}{\Psi}$ or equal to $e(\mathbf{i})$ for some $\mathbf{i}=(i_1,\dots,i_{n-1})\in{I^{n-1}}$ such that exactly one of $xv_{{\tts}_1}$ and $xv_{{\tts}_2}$ is zero.
			We know from above that $\eta(xv_{{\tts}_1})=\operatorname{shift}(x)v_{{\ttt_1}}$ and $\eta(xv_{{\tts}_2})=\operatorname{shift}(x)v_{{\ttt_2}}$, and moreover, by the injectivity of $\eta$, exactly one of these is non-zero.
			Without loss of generality, we assume that $\operatorname{shift}(x)v_{\ttt_1}\neq 0$ and observe that $\operatorname{shift}(x)v\equiv\alpha_1\operatorname{shift}(x)v_{\ttt_1}\Mod {\im(\phi_m)}$. We know from \cref{rel} that  $\operatorname{shift}(x)$ either lies in $\scaleobj{1.2}{\Psi}$ and is of the form $\alpha\psi_{r_1}\psi_{r_2}\dots\psi_{r_k}$ for some $\alpha\in\mathbb{F}\backslash\{0\}$, $1<r_i<n$ and $1\leqslant i\leqslant k$, or it is equal to an idempotent $e(\kappa_1-1,\mathbf{i})$ for some $\mathbf{i}\in I^{n-1}$, so that $\operatorname{shift}(x)v_{\ttt_1}=v(1,b_{1},\dots,b_{m-1})$ for some $1<b_1<\dots<b_{m-1}\leqslant n$.
			Hence $\operatorname{shift}(x)v_{\ttt_1}\in
			\ker(\gamma_m)\backslash\im(\phi_m)$. We now observe that $\operatorname{shift}(x)v\equiv\alpha_1v(1,b_{1},\dots,b_{m-1})\Mod{\im(\phi_m)}$, and recall from above that $v(1,b_{1},\dots,b_{m-1})+\im(\phi_m)$ generates $\ker(\gamma_m)/\im(\phi_m)$, so that $v+\im(\phi_m)$ generates $\ker(\gamma_m)/\im(\phi_m)$ too.
			
			We now let $r>2$ and suppose that there exists $x\in\mathscr{H}_n^{\Lambda}$, which either lies in $\scaleobj{1.2}{\Psi}$ or is equal to $e(\mathbf{i})$ for some $\mathbf{i}\in I^n$, such that $x(\alpha_1v_{{\ttt}_1}+\dots+\alpha_{r-1}v_{{\ttt}_{r-1}})\equiv \alpha v_{\ttt}\Mod{\im(\phi_m)}$ for some $\alpha\in\mathbb{F}\backslash \{0\}$ and for some $v_{\ttt}\in\ker(\gamma_m)\backslash\im(\phi_m)$. It follows by induction that $xv\equiv \alpha v_{\ttt}+x(\alpha_rv_{\ttt_r})\Mod{\im(\phi_m)}$.
			If we first suppose that $x(\alpha_rv_{\ttt_r})\equiv 0\Mod{\im(\phi_m)}$, then $\alpha v_{\ttt}+\im(\phi_m)$ generates $\ker(\gamma_m)/\im(\phi_m)$ and so must $v+\im(\phi_m)$. Instead suppose that $x(\alpha_rv_{\ttt_r})\equiv\beta v_{\tts}\Mod{\im(\phi_m)}$ for some $\beta\in\mathbb{F}\backslash\{0\}$ and $v_{\tts}\in\ker(\gamma_m)\backslash\im(\phi_m)$, so that $xv\equiv \alpha v_{\ttt}+\beta v_{\tts}\Mod{\im(\phi_m)}$. We know from above with $r=2$ that there exists $x'\in\mathscr{H}_n^{\Lambda}$ such that $x'(xv)\equiv \gamma v_{\ttr}\Mod{\im(\phi_m)}$ for some $\gamma\in\mathbb{F}\backslash\{0\}$ and $v_{\ttr}\in\ker(\gamma_m)\backslash\im(\phi_m)$. Thus $v+\im(\phi_m)$ generates $\ker(\gamma_m)/\im(\phi_m)$ for all $r>2$.	}
		\qedhere
	\end{enumerate}
\end{proof}

\begin{thm}\label{thm:case4comp}
Suppose that $\kappa_2\equiv{\kappa_1-1}\Mod{e}$ and $n\equiv{0}\Mod{e}$.
\begin{enumerate}
	\item{Then $S_{((n-1),(1))}$ has the composition series
		\[
		0
		\subset
		\im (\phi_1)
		\subset
		\im (\chi_1)
		\subset
		S_{((n-1),(1))},
		\]
	which has composition factors
	$S_{((n),\varnothing)}$,
	$\im (\phi_2)$ and
	$\ker (\gamma_2)/\im (\phi_2)$ from bottom to top.}
	\item{Then, for all $m\in\{2,\dots,n-2\}$, $S_{((n-m),(1^m))}$ has the composition series
		\[
		0
		\subset
		\im (\phi_m)
		\subset
		\im (\chi_m)
		\subset
		\ker (\gamma_m)
		+
		\im (\chi_m)
		\subset
		S_{((n-m),(1^m))}
		\]	
	which has composition factors
		$\im (\phi_m)$,
		$\im (\phi_{m+1})$,
		$\ker (\gamma_m)/
		\im (\phi_m)$ and
		$\ker (\gamma_{m+1})/
		\im (\phi_{m+1})$
		from bottom to top.}
	\item{Then $S_{((1),(1^{n-1}))}$ has the composition series
		\[
		0
		\subset
		\im (\phi_{n-1})
		\subset
		\im (\gamma_{n-2})
		\subset
		S_{((1),(1^{n-1}))},
		\]
	which has composition factors 
	$\im (\phi_{n-1})$, 
	$\im (\gamma_{n-2})/\im (\phi_{n-1})$
	and $S_{(\varnothing,(1^n))}$ from bottom to top.}
\end{enumerate}
\end{thm}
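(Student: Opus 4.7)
The plan is to compute each successive quotient in the proposed filtrations and identify it as an irreducible module already classified in \cref{prop:case4}, using only the combinatorial descriptions of \cref{imker}, the composition identity $\phi_{m+1} = \gamma_m \circ \chi_m$ from \cref{lem:homcomps}, the exactness of the $\gamma$-sequence in \cref{exact}, and standard isomorphism theorems. The crucial observation, read off from \cref{imker}, is that for $2 \leqslant m \leqslant n-2$ we have
\[\im(\phi_m) = \im(\chi_m) \cap \ker(\gamma_m),\]
since a standard basis vector $v_\ttt \in S_{((n-m),(1^m))}$ lies in $\im(\phi_m)$ if and only if $\ttt(1,1,1)=1$ \emph{and} $\ttt(m,1,2)=n$, i.e.\ if and only if it lies in both $\im(\chi_m)$ and $\ker(\gamma_m)$. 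Counting standard tableaux in each layer then verifies that the four inclusions in part 2 are strict.

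For part 2, I will compute the four successive quotients as follows. The bottom factor $\im(\phi_m)$ is irreducible by \cref{prop:case4}. Applying the first isomorphism theorem to $\gamma_m$ restricted to $\im(\chi_m)$ and using $\phi_{m+1} = \gamma_m \circ \chi_m$ gives
\[\im(\chi_m)/\im(\phi_m) \cong \gamma_m(\im(\chi_m)) = \im(\phi_{m+1}),\]
which is again irreducible by \cref{prop:case4}. The second isomorphism theorem yields
\[(\im(\chi_m) + \ker(\gamma_m))/\im(\chi_m) \cong \ker(\gamma_m)/\im(\phi_m),\]
irreducible by \cref{prop:case4}. Finally, quotienting $S_{((n-m),(1^m))}$ first by $\ker(\gamma_m)$ gives
\[S_{((n-m),(1^m))}/(\im(\chi_m) + \ker(\gamma_m)) \cong \im(\gamma_m)/\im(\phi_{m+1}),\]
and the exact sequence of \cref{exact} (which applies because $\kappa_2 \equiv \kappa_1 - 1$ and $n \equiv 0$ modulo $e$ together force $n \equiv \kappa_2 - \kappa_1 + 1\Mod{e}$) identifies $\im(\gamma_m)$ with $\ker(\gamma_{m+1})$, yielding the irreducible $\ker(\gamma_{m+1})/\im(\phi_{m+1})$ required by the statement.

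Parts 1 and 3 follow by minor adjustments to this scheme. In part 1 ($m=1$), $\ker(\gamma_1)$ is spanned by the single basis vector $v(n)$, which already lies in $\im(\chi_1)$, so $\im(\chi_1) + \ker(\gamma_1) = \im(\chi_1)$ and the four-term series collapses to three; the bottom factor is identified with $S_{((n),\varnothing)}$ because $S_{((n),\varnothing)}$ is one-dimensional (hence irreducible) and $\phi_1$ is non-zero, so injective. In part 3 ($m = n-1$), every standard $((1),(1^{n-1}))$-tableau with $\ttt(1,1,1) = 1$ automatically satisfies $\ttt(n-1,1,2) = n$, so $\im(\chi_{n-1}) = \im(\phi_{n-1})$; the remaining series $0 \subset \im(\phi_{n-1}) \subset \im(\gamma_{n-2}) \subset S_{((1),(1^{n-1}))}$ has $\im(\gamma_{n-2}) = \ker(\gamma_{n-1})$ by exactness, middle factor $\ker(\gamma_{n-1})/\im(\phi_{n-1})$ irreducible by \cref{prop:case4}, and top factor $S_{((1),(1^{n-1}))}/\ker(\gamma_{n-1}) \cong \im(\gamma_{n-1}) = S_{(\varnothing,(1^n))}$, which is one-dimensional and hence irreducible.

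The argument is essentially bookkeeping once the irreducibility results of \cref{prop:case4} are in hand, so there is no real obstacle. The only care required is verifying the strict inclusions by counting standard tableaux and correctly handling the degenerate endpoints $m = 1$ and $m = n-1$, where the four-term series collapses to three terms.
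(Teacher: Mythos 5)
Your proof is correct and follows essentially the same route as the paper's, which also reduces each successive quotient to one of the irreducible modules in \cref{prop:case4} using the isomorphism theorems, the identity $\phi_{m+1}=\gamma_m\circ\chi_m$ from \cref{lem:homcomps}, and exactness of the $\gamma$-sequence. The one genuinely useful thing you add is to make the key identity $\im(\phi_m)=\im(\chi_m)\cap\ker(\gamma_m)$ explicit from the outset, reading it directly off the basis descriptions in \cref{imker}; the paper instead routes through \cref{lem:pic} and the exactness identification $\ker(\gamma_m)=\im(\gamma_{m-1})$, and in its treatment of the middle factor $\im(\chi_m)/\im(\phi_m)\cong\im(\gamma_m\circ\chi_m)$ it relies on this identity without stating it at that point, so your version is slightly more transparent there. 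Two small housekeeping remarks: you state the identity $\im(\phi_m)=\im(\chi_m)\cap\ker(\gamma_m)$ only for $2\leqslant m\leqslant n-2$, but you implicitly use it again when identifying the middle factor of $S_{((n-1),(1))}$ in part 1, so you should note it also holds for $m=1$ (both sides are $\spn\{v(n)\}$); and in part 3, to justify replacing $\im(\chi_{n-1})+\ker(\gamma_{n-1})$ by $\ker(\gamma_{n-1})$ you should observe, as you do for the endpoint $m=1$, that $\im(\chi_{n-1})\subseteq\ker(\gamma_{n-1})$, which follows because every standard $((1),(1^{n-1}))$-tableau with $\ttt(1,1,1)=1$ has $\ttt(n-1,1,2)=n$. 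Neither of these is a gap, just a place where a sentence of verification is worth writing out.
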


\begin{proof}
We use \cref{prop:case4} throughout.
	\begin{enumerate}
		\item{From \cref{lem:pic} we know that $S_{((n-1),(1))}$ has the filtration 
			$\im(\phi_1)\subset\im(\chi_1)\subset S_{((n-1),(1))}$.
			Also, we know from \cref{prop:homs} that $\gamma_1\circ\phi_1=0$, so that by \cref{lem:pic} the middle factor in the filtration of $S_{((n-1),(1))}$ is
			\begin{align*}
			\im(\chi_1)/\im(\phi_1)
			\cong\im(\gamma_1\circ\chi_1)/\im(\gamma_1\circ\phi_1)
			=\im(\gamma_1\circ\chi_1)
			=\im(\phi_2).
			\end{align*}
			By \cref{imker}, we have $\ker(\tau_1)=\spn\{v_{\ttt}\ |\ \ttt\in\std((n-1),(1)),\ttt(1,1,1)=1\}$. Now, by using \cref{gamiso}, the top factor in the filtration of $S_{((n-1),(1))}$ is given by
			\begin{align*}
			 S_{((n-1),(1))}/\im(\chi_1) 
			=	\left\{v(1)\in S_{((n-1),(1))}\right\}
			&\cong	\left\{\gamma_1\left(v(1)\right)\ \mid v(1)\in S_{((n-1),(1))}\right\}\\
			&=	\left\{v(1,n)\in S_{((n-2),(1^2))}\right\}\\
			&=\ker(\gamma_2)/\im(\phi_2).
			\end{align*}}
		\item{Let $2\leqslant{m}\leqslant{n-2}$.
			By \cref{lem:pic}, we know that $S_{((n-m),(1^m))}$ has the filtration $\im(\phi_m)\subset\im(\chi_m)\subset\ker(\gamma_m)+\im(\chi_m)\subset	S_{((n-m),(1^m))}$.	We know from \cref{prop:homs} that $\gamma_m\circ\phi_m=0$. Thus, together with \cref{lem:pic}, the second from bottom factor in the filtration of $S_{((n-m,1^m))}$ is
			\begin{align*}
			\im(\chi_m)/\im(\phi_m)
			\cong\im(\gamma_m\circ\chi_m)/\im(\gamma_m\circ\phi_m)
			=\im(\gamma_m\circ\chi_m)
			&=\im(\phi_{m+1}).
			\end{align*}
			Using \cref{lem:pic}, the second from top factor in the filtration of $S_{((n-m,1^m))}$ is
			\begin{align*}
			\left(\ker(\gamma_m)+\im(\chi_m)/\im(\chi_m)\right)
			&=\ker(\gamma_m)/\left(\ker(\gamma_m)\cap\im(\chi_m)\right)\\
			&=\ker(\gamma_m)/\left(\im(\gamma_{m-1})\cap\im(\chi_m)\right)\\
			&=\ker(\gamma_m)/\im(\phi_{m}).
			\end{align*}
			Finally, using \cref{lem:pic}, the top factor in the filtration of $S_{((n-m,1^m))}$ is
			\begin{align*}
			S_{((n-m,1^m))}/\left(\ker(\gamma_m)+\im(\chi_m)\right)
			&\cong\left(\im(\gamma_m)+\im(\chi_{m+1})\right)/\im(\chi_{m+1})\\
			&=\left(\ker(\gamma_{m+1})+\im(\chi_{m+1})\right)/\im(\chi_{m+1})\\
			&=\ker(\gamma_{m+1})/\left(\ker(\gamma_{m+1})\cap\im(\chi_{m+1})\right)\\
			&=\ker(\gamma_{m+1})/\im(\phi_{m+1}).
			\end{align*}}
		\item{It is clear from \cref{lem:pic} that $\im(\phi_{n-1})\subset\im(\gamma_{n-2})\subset S_{((1),(1^{n-1}))}$ is a filtration of $S_{((1),(1^{n-1}))}$.
			Also by \cref{lem:pic}, the top factor of $S_{((1)(1^{n-1})}$ is
			\begin{align*}
			S_{((1),(1^{n-1})}/\im(\gamma_{n-2})
			&=\left(\ker(\gamma_{n-1})+\im(\gamma_{n-1})\right)/\im(\gamma_{n-2})\\
			&=\left(\ker(\gamma_{n-1})+\im(\gamma_{n-1})\right)/\ker(\gamma_{n-1})\\
			&=\im(\gamma_{n-1})/\ker(\gamma_{n-1})\cap\im(\gamma_{n-1})\\
			&=\im(\gamma_{n-1})\\
			&\cong S_{(\varnothing,(1^n))}.
			\end{align*}}
		\qedhere
	\end{enumerate}
\end{proof}

\begin{ex}\label{ex:comp4}
	Let $e=3$ and $\kappa=(0,2)$.
	Then $S_{((3),(1^3))}$ has the composition series
	\[0\subset\im(\phi_3)\subset\im(\chi_3)\subset\ker(\gamma_3)+\im(\chi_3)\subset S_{((3),(1^3))},\]
	where
	\begin{itemize}
		\item $\im(\chi_3)/\im(\phi_3)\cong\im(\phi_4)$,
		\item $\left(\ker(\gamma_3)+\im(\chi_3)\right)/\im(\chi_3)\cong\ker(\gamma_3)/\im(\phi_3)$, 
		\item $S_{((3),(1^3))}/\left(\ker(\gamma_3)+\im(\chi_3)\right)\cong\ker(\gamma_{4})/\im(\phi_{4})$.
	\end{itemize}
	By \cref{imker}, the basis elements of $\im(\phi_3)=\left\langle \psi_1\psi_2\psi_5\psi_4\psi_3z_{((3),(1^3))}\right\rangle$ correspond to the $((3),(1^3))$-tableaux
	\[\gyoung(!\gr1!\wh45,,2,3,!\gr6),\ \gyoung(!\gr1!\wh35,,2,4,!\gr6),\ \gyoung(!\gr1!\wh25,,3,4,!\gr6),\ \gyoung(!\gr1!\wh34,,2,5,!\gr6),\ \gyoung(!\gr1!\wh24,,3,5,!\gr6),\ \young(!\gr1!\wh23,,4,5,!\gr6)\:.\]
	We have $\im(\chi_3)=\left\langle \psi_1\psi_2\psi_3z_{((3),(1^3))}\right\rangle$, so that the basis elements of $\im(\phi_4)$ correspond to
	\[\gyoung(!\gr1!\wh5!\gr6,,!\wh2,3,4),\ \gyoung(!\gr1!\wh4!\gr6,,!\wh2,3,5),\ \gyoung(!\gr1!\wh3!\gr6,,!\wh2,4,5),\ \gyoung(!\gr1!\wh2!\gr6,,!\wh3,4,5)\:.\]
	We have $\ker(\gamma_3)+\im(\chi_3)=\left\langle \psi_5\psi_4\psi_3z_{((3),(1^3))}\right\rangle$, so that the basis elements of $\ker(\gamma_3)/\im(\phi_3)$ correspond to
	\[\gyoung(345,,!\gr1,!\wh2,!\gr6),\ \gyoung(245,,!\gr1,!\wh3,!\gr6),\ \gyoung(235,,!\gr1,!\wh4,!\gr6),\ \gyoung(234,,!\gr1,!\wh5,!\gr6)\:.\]
	We have $S_{((3),(1^3))}=\left\langle z_{((3),(1^3))}\right\rangle$, so that the basis elements of $\ker(\gamma_{4})/\im(\phi_{4})$ correspond to
	\[\gyoung(45!\gr6,,1,!\wh2,3),\ \gyoung(35!\gr6,,1,!\wh2,4),\ \gyoung(34!\gr6,,1,!\wh2,5),\ \gyoung(25!\gr6,,1,!\wh3,4), \gyoung(24!\gr6,,1,!\wh3,5), \gyoung(23!\gr6,,1,!\wh4,5)\:.\]
	
	Observe that for any $v_\ttr,v_\tts\in\im(\chi_3)$, $v_{\ttt},v_{\ttu}\in\im(\phi_4)$, $v_{\ttw},v_{\ttx}\in\ker(\gamma_3)/\im(\gamma_3)$ and $v_{\tty},v_{\ttz}\in\ker(\gamma_4)/\im(\gamma_4)$ we can find a directed path from $\ttr$ to $\tts$, from $\ttt$ to $\ttu$, from $\ttw$ to $\ttx$ and from $\tty$ to $\ttz$, respectively, as follows, where the basis elements of $S_{((3),(1^3))}$ are represented by the legs of the corresponding $((3),(1^3))$-tableaux.
	
	\[
	\begin{tikzpicture}[scale=0.55,>=stealth]
	\fill [] (-12,0) node {$\gyoung(4,5,!\gr6)$};
	\fill [] (-9.5,0) node {$\gyoung(3,5,!\gr6)$};
	\fill [] (-6.5,0) node {$\gyoung(2,4,!\gr6)$};
	\fill [] (-4,0) node {$\gyoung(2,3,!\gr6)$};
	\fill [] (4,0) node {$\gyoung(!\gr1,!\wh3,5)$};
	\fill [] (6.5,0) node {$\gyoung(!\gr1,!\wh4,5)$};
	\fill [] (9.5,0) node {$\gyoung(!\gr1,!\wh2,4)$};
	\fill [] (12,0) node {$\gyoung(!\gr1,!\wh2,3)$};
	\fill [] (0,4) node {$\young(2,3,5)$};
	\fill [] (0,12) node {$\young(2,4,5)$};
	\fill [] (4,8) node {$\young(2,3,4)$};
	\fill [] (-4,8) node {$\young(3,4,5)$};
	\fill [] (-8,4) node {$\gyoung(3,4,!\gr6)$};
	\fill [] (8,4) node {$\gyoung(!\gr1,!\wh3,4)$};
	\fill [] (-8,-4) node {$\gyoung(2,5,!\gr6)$};
	\fill [] (8,-4) node {$\gyoung(!\gr1,!\wh2,5)$};
	\fill [] (0,-4) node {$\gyoung(!\gr1,!\wh3,!\gr6)$};
	\fill [] (4,-8) node {$\gyoung(!\gr1,!\wh2,!\gr6)$};
	\fill [] (-4,-8) node {$\gyoung(!\gr1,!\wh5,!\gr6)$};
	\fill [] (0,-12) node {$\gyoung(!\gr1,!\wh4,!\gr6)$};
	\draw [densely dashdotted,shorten >=5mm,shorten <=5mm,thick,->] (-12,0) -- (-8,4)node [pos=0.5, left] {$-\psi_{4}$};
	\draw [densely dashdotted,shorten >=3mm,shorten <=3mm,thick,->] (-9.5,0) -- (-12,0)node [pos=0.5, above] {$\psi_{3}$};
	\draw [densely dashdotted,shorten >=3mm,shorten <=3mm,thick,->] (-4,0) -- (-6.5,0)node [pos=0.5, above] {$\psi_{3}$};
	\draw [densely dashdotted,shorten >=8mm,shorten <=8mm,thick,->] (-8,4) -- (-9.5,0)node [pos=0.5, left] {$\psi_{4}$};
	\draw [densely dashdotted,shorten >=8mm,shorten <=8mm,thick,->] (-6.5,0) -- (-8,4)node [pos=0.5, left] {$\psi_{2}$};
	\draw [densely dashdotted,shorten >=5mm,shorten <=5mm,thick,->] (-8,4) -- (-4,0)node [pos=0.3, right] {$-\psi_{2}$};
	\draw [densely dashdotted,shorten >=8mm,shorten <=8mm,thick,->] (-6.5,0) -- (-8,-4)node [pos=0.5, left] {$\psi_{4}$};
	\draw [densely dashdotted,shorten >=5mm,shorten <=5mm,thick,->] (-8,-4) -- (-4,0)node [pos=0.5, left] {$\psi_{4}$};
	\draw [densely dashdotted,shorten >=8mm,shorten <=8mm,thick,->] (-8,-4) -- (-9.5,0)node [pos=0.5, left] {$\psi_{2}$};
	\draw [densely dashdotted,shorten >=5mm,shorten <=5mm,thick,->] (-12,0) -- (-8,-4)node [pos=0.5, left] {$\psi_{2}$};
	\draw [dashdotted,shorten >=8mm,shorten <=8mm,thick,->] (9.5,0) -- (8,4)node [pos=0.5, right] {$\psi_{2}$};
	\draw [dashdotted,shorten >=5mm,shorten <=5mm,thick,->] (8,4) -- (12,0)node [pos=0.5, right] {$-\psi_{2}$};
	\draw [dashdotted,shorten >=3mm,shorten <=3mm,thick,->] (12,0) -- (9.5,0)node [pos=0.5, above] {$\psi_{3}$};
	\draw [dashdotted,shorten >=3mm,shorten <=3mm,thick,->] (4,0) -- (6.5,0)node [pos=0.5, above] {$\psi_{3}$};
	\draw [dashdotted,shorten >=5mm,shorten <=5mm,thick,->] (8,4) -- (4,0)node [pos=0.3, left] {$\psi_{4}$};
	\draw [dashdotted,shorten >=8mm,shorten <=8mm,thick,->] (6,0) -- (8,4)node [pos=0.5, right] {$-\psi_{4}$};
	\draw [dashdotted,shorten >=5mm,shorten <=5mm,thick,->] (8,-4) -- (4,0)node [pos=0.3, left] {$\psi_{2}$};
	\draw [dashdotted,shorten >=5mm,shorten <=5mm,thick,->] (8,-4) -- (12,0)node [pos=0.5, right] {$\psi_{4}$};
	\draw [dashdotted,shorten >=8mm,shorten <=8mm,thick,->] (9.5,0) -- (8,-4)node [pos=0.5, right] {$\psi_{4}$};
	\draw [dashdotted,shorten >=8mm,shorten <=8mm,thick,->] (5.5,0) -- (8,-4)node [pos=0.5, right] {$\psi_{2}$};
	\draw [dotted,shorten >=2mm,shorten <=2mm,thick,->] (0,5) -- (0,11)node [pos=0.5, right] {$\psi_{3}$};
	\draw [dotted,shorten >=5mm,shorten <=5mm,thick,->] (0,12) -- (4,8)node [pos=0.4, right] {$-\psi_{4}$};
	\draw [dotted,shorten >=5mm,shorten <=5mm,thick,->] (4,8) -- (0,4)node [pos=0.5, left] {$\psi_{4}$};
	\draw [dotted,shorten >=5mm,shorten <=5mm,thick,->] (-4,8) -- (0,4)node [pos=0.5, right] {$-\psi_{2}$};
	\draw [dotted,shorten >=5mm,shorten <=5mm,thick,->] (0,12) -- (-4,8)node [pos=0.4, left] {$\psi_{2}$};
	\draw [dashed,shorten >=5mm,shorten <=5mm,thick,->] (4,-8) -- (0,-4)node [pos=0.4, left] {$\psi_{2}$};
	\draw [dashed,shorten >=5mm,shorten <=5mm,thick,->] (-4,-8) -- (0,-4)node [pos=0.4, right] {$\psi_{4}$};
	\draw [dashed,shorten >=5mm,shorten <=5mm,thick,->] (0,-12) -- (-4,-8)node [pos=0.4, left] {$\psi_{4}$};
	\draw [dashed,shorten >=5mm,shorten <=5mm,thick,->] (0,-12) -- (4,-8)node [pos=0.4, right] {$\psi_{2}$};
	\draw [dashed,shorten >=2mm,shorten <=2mm,thick,->] (0,-5) -- (0,-11)node [pos=0.5, right] {$\psi_{3}$};
	\draw [shorten >=6mm,shorten <=6mm,thick,->] (6.5,0) -- (0,12)node [pos=0.4, right] {$\psi_{1}$};
	\draw [shorten >=6mm,shorten <=6mm,thick,->] (0,12) -- (-6.5,0)node [pos=0.6, right] {$\psi_{5}$};
	\draw [shorten >=6mm,shorten <=6mm,thick,->] (6.5,0) -- (0,-12)node [pos=0.4, right] {$\psi_{5}$};
	\draw [shorten >=6mm,shorten <=6mm,thick,->] (0,-12) -- (-6.5,0)node [pos=0.6, right] {$\psi_{1}$};
	\draw [shorten >=5mm,shorten <=5mm,thick,->] (8,4) -- (4,8)node [pos=0.6, right] {$\psi_{1}$};
	\draw [shorten >=5mm,shorten <=5mm,thick,->] (4,0) -- (0,4)node [pos=0.6, right] {$\psi_{1}$};
	\draw [shorten >=5mm,shorten <=5mm,thick,->] (0,-4) -- (-4,0)node [pos=0.6, right] {$\psi_{1}$};
	\draw [shorten >=5mm,shorten <=5mm,thick,->] (-4,-8) -- (-8,-4)node [pos=0.4, left] {$\psi_{1}$};
	\draw [shorten >=5mm,shorten <=5mm,thick,->] (-4,8) -- (-8,4)node [pos=0.4, left] {$\psi_{5}$};
	\draw [shorten >=5mm,shorten <=5mm,thick,->] (0,4) -- (-4,0)node [pos=0.55, right] {$\psi_{5}$};
	\draw [shorten >=5mm,shorten <=5mm,thick,->] (4,0) -- (0,-4)node [pos=0.55, right] {$\psi_{5}$};
	\draw [shorten >=5mm,shorten <=5mm,thick,->] (8,-4) -- (4,-8)node [pos=0.55, right] {$\psi_{5}$};
	\end{tikzpicture}
	\]
\end{ex}

We have thus established the composition series, up to isomorphism, of Specht modules labelled by hook bipartitions with quantum characteristic $e\in\{3,4,\dots\}$. To completely determine the rows of the decomposition matrix for $\mathscr{H}_n^{\Lambda}$ labelled by hook bipartitions, we need to deduce the non-isomorphic composition factors. In subsequent work, we compute the irreducible labels of these factors in order to obtain the corresponding decomposition numbers, together with their graded analogues.

\section*{Acknowledgements}
This paper was written under the guidance of the author's PhD supervisor, Matthew Fayers, at Queen Mary University of London. The author would like to thank Dr Fayers for his utmost patience and invaluable counsel, without which, this paper would not have come to fruition. The author is also grateful to the referee for their many helpful comments and corrections. This research was supported by the Engineering Physical Sciences Research Council.

\bibliographystyle{plain}
%\bibliography{biblio}

\begin{thebibliography}{10}
	
	\bibitem{A1}
	S.~Ariki.
	\newblock \href{http://dx.doi.org/10.1006/jabr.1995.1292}{Representation theory
		of a {Hecke} algebra of {$G(r, p, n)$}}.
	\newblock {\em J.\ Algebra}, 177(1):164--185, 1995.
	
	\bibitem{A2}
	S.~Ariki.
	\newblock \href{http://projecteuclid.org/euclid.kjm/1250518452}{On the
		decomposition numbers of the {Hecke} algebra of {$G(m,1,n)$}}.
	\newblock {\em J.\ Math.\ Kyoto Univ.}, 36(4):789--808, 1996.
	
	\bibitem{AK}
	S.~Ariki and K.~Koike.
	\newblock \href{http://dx.doi.org/10.1006/aima.1994.1057}{A {Hecke} algebra of
		$(\mathbb{Z}/r\mathbb{Z})\wr\mathfrak{S}_n$ and construction of its
		irreducible representations}.
	\newblock {\em Adv.\ Math.}, 106(2):216--243, 1994.
	
	\bibitem{BM}
	M.~Brou\'{e} and G.~Malle.
	\newblock Zyklotomische {Heckealgebren}.
	\newblock {\em Ast\'{e}risque}, 212:119---189, 1993.
	\newblock Repr\'{e}sentations unipotentes g\'{e}n\'{e}riques et blocs des
	groupes r\'{e}ductifs finis.
	
	\bibitem{BK}
	J.~Brundan and A.~Kleshchev.
	\newblock \href{http://dx.doi.org/10.1007/s00222-009-0204-8}{Blocks of
		cyclotomic {Hecke} algebras and {Khovanov}--{Lauda} algebras}.
	\newblock {\em Invent.\ Math.}, 178(3):451--484, 2009.
	
	\bibitem{BK3}
	J.~Brundan and A.~Kleshchev.
	\newblock \href{http://dx.doi.org/10.1016/j.aim.2009.06.018}{Graded
		decomposition numbers for cyclotomic {Hecke} algebras}.
	\newblock {\em Adv.\ Math.}, 222(6):1883--1942, 2009.
	
	\bibitem{BKW}
	J.~Brundan, A.~Kleshchev, and W.~Wang.
	\newblock \href{http://dx.doi.org/10.1515/CRELLE.2011.033}{Graded {Specht}
		modules}.
	\newblock {\em J.\ Reine Angew.\ Math.}, 655:61--87, 2011.
	
	\bibitem{CMT}
	J.~Chuang, H.~Miyachi, and K.~M. Tan.
	\newblock \href{https://doi.org/10.1016/j.jalgebra.2004.04.017}{A $v$-analogue
		of Peel's Theorem}.
	\newblock {\em J.\ Algebra}, 280(1):219--231, 2004.
	
	\bibitem{dels}
	M.~de~Boeck, A.~Evseev, S.~Lyle, and L.~Speyer.
	\newblock \href{http://dx.doi.org/10.1007/s00031-017-9444-7}{On bases of some
		simple modules of symmetric groups and {Hecke} algebras}.
	\newblock {\em Transform.\ Groups}, to appear.
	
	\bibitem{DJ}
	R.~Dipper and G.~D. James.
	\newblock \href{http://dx.doi.org/10.1112/plms/s3-52.1.20}{Representations of
		{Hecke} algebras of general linear groups}.
	\newblock {\em Proc.\ London Math.\ Soc.}, 52(3):20--52, 1986.
	
	\bibitem{FS}
	M.~Fayers and L.~Speyer.
	\newblock \href{https://doi.org/10.1007/s10801-016-0674-x}{Generalised column
		removal for graded homomorphisms between Specht modules}.
	\newblock {\em J.~Algebriac Combinatorics}, 44(2):393--342, 2016.
	
	\bibitem{James}
	G.~D. James.
	\newblock {\em \href{http://dx.doi.org/10.1007/BFb0067708}{The {Representation}
			{Theory} of the {Symmetric} {Groups}}}, volume 682 of {\em Lecture Notes in
		Mathematics}.
	\newblock Springer, Berlin, 1978.
	
	\bibitem{Kac}
	V.~Kac.
	\newblock {\em \href{http://dx.doi.org/10.1017/CBO9780511626234}{Infinite
			dimensional {Lie} algebras}}.
	\newblock Cambridge University Press, Cambridge, third edition, 1990.
	
	\bibitem{KL}
	M.~Khovanov and A.~Lauda.
	\newblock \href{http://dx.doi.org/10.1090/S1088-4165-09-00346-X}{A diagrammatic
		approach to categorification of quantum groups {I}}.
	\newblock {\em Represent.\ Theory}, 13:309--347, 2009.
	
	\bibitem{KMR}
	A.~Kleshchev, A.~Mathas, and A.~Ram.
	\newblock \href{http://dx.doi.org/10.1112/plms/pds019}{Universal graded
		{S}pecht modules for cyclotomic {H}ecke algebras}.
	\newblock {\em Proc.\ London Math.\ Soc.}, 105:1245--1289, 2012.
	
	\bibitem{LLT}
	A.~Lascoux, B.~Leclerc, and J.-Y. Thibon.
	\newblock \href{http://projecteuclid.org/euclid.cmp/1104287629}{Hecke algebras
		at roots of unity and crystal bases of quantum affine algebras}.
	\newblock {\em Comm.\ Math.\ Phys.}, 181(1):205--263, 1996.
	
	\bibitem{P}
	M.~H. Peel.
	\newblock \href{ https://doi.org/10.1017/S0017089500001245}{Hook
		representations of the symmetric group}.
	\newblock {\em Glasgow Math.~J.~}, 12(2):136--149, 1971.
	
	\bibitem{Rou}
	R.~Rouquier.
	\newblock $2$-{Kac}--{Moody} algebras.
	\newblock \href{http://arxiv.org/abs/0812.5023}{arXiv:0812.5023}, 2008.
	\newblock preprint.
	
	\bibitem{ST}
	G.~C. Shephard and J.~A. Todd.
	\newblock \href{http://dx.doi.org/10.4153/CJM-1954-028-3}{Finite unitary
		reflection groups}.
	\newblock {\em Canad.\ J.\ Math.}, 6:274--304, 1954.
	
	\bibitem{Spey}
	L.~Speyer.
	\newblock \href{http://dx.doi.org/10.1016/j.jalgebra.2014.07.011}{Decomposable
		{Specht} modules for the {Iwahori--Hecke} algebra
		$\mathscr{H}_{{\mathbb{F}},-1}(\mathfrak{S}_n)$}.
	\newblock {\em J.\ Algebra}, 418:227--264, 2014.
	
\end{thebibliography}

\end{document}